\documentclass[12pt]{amsart}
\usepackage[T1]{fontenc}
\usepackage[utf8]{inputenc}
\usepackage[
  backend=biber,
  style=numeric,
  sorting=none,
  giveninits=false,
  doi=true,
  isbn=true,
  url=true,
  eprint=true
]{biblatex}
\usepackage{amsmath, amssymb, amsthm}
\usepackage{mathrsfs}
\usepackage{bbm}
\usepackage{hyperref}
\usepackage{geometry}
\usepackage{enumitem}
\usepackage{booktabs}
\usepackage{array}
\usepackage{tabularx}
\usepackage{tikz-cd}   

\newtheorem{theorem}{Theorem}[section]
\newtheorem{proposition}[theorem]{Proposition}
\newtheorem{lemma}[theorem]{Lemma}
\newtheorem{corollary}[theorem]{Corollary}
\newtheorem{definition}[theorem]{Definition}

\theoremstyle{remark}
\newtheorem{remark}[theorem]{Remark}

\newcommand{\Ext}{\mathrm{Ext}}
\newcommand{\Hom}{\mathrm{Hom}}

\newcommand{\id}{\mathrm{id}}

\newcommand{\tors}{\mathrm{tors}}

\DeclareMathOperator{\coker}{coker}

\begin{document}
\title[Integral Perverse Obstructions for Surface Singularities]{Integral Perverse Obstructions for Normal Surface Singularities:
Resolution Determinants and Monodromy}

\author{Abdul Rahman}
\thanks{Email: arahman@alum.howard.edu}
\subjclass[2020]{14B05, 14F45, 32S25, 32S50, 55N33}
\keywords{normal surface singularities, perverse sheaves, dual middle perversity, intersection complexes, link cohomology, exceptional lattices, discriminant groups, Milnor monodromy, variation maps, rational double points}

\begin{abstract}
For a germ $(X,0)$ of a normal complex analytic surface, let
$E:=H^0({}^p_+IC_X\mathbb Z)_0$, where ${}^pIC_X\mathbb Z$ and ${}^p_+IC_X\mathbb Z$ denote the ordinary and dual middle-perversity intersection complexes with integral coefficients. This finite abelian group measures the integral discrepancy between the two
middle extensions. Motivated by work of Jung--Saito, we study $E$ as a local invariant of the singularity. We prove that $E$ admits a topological realization as $H^2(L,\mathbb Z)_{\tors}$, where $L$ is the link of the singularity, and a geometric realization as the discriminant group of the exceptional lattice of the minimal
resolution. In particular, if $M$ is the intersection matrix of the irreducible exceptional curves, then $|E|=|\det(M)|$. If $(X,0)$ is an isolated hypersurface surface singularity, we further prove that $E\cong \coker(T-\id)_{\tors}$, where $T$ is the Milnor monodromy
on integral vanishing cohomology. Under the additional hypothesis that
$(T-\id)\otimes_{\mathbb Z}\mathbb Q$ is an isomorphism, this yields
$|E|=|\det(T-\id)|$. Thus the same local integral obstruction admits compatible perverse, topological, resolution-theoretic, and monodromy-theoretic realizations.
\end{abstract}
\maketitle
\tableofcontents

\section{Introduction}

Let $(X,0)$ be a germ of a normal complex analytic surface. In the local surface case,
Jung and Saito define the finite abelian group
\[
E:=H^0({}^p_+IC_X\mathbb Z)_0,
\]
and record a self-dual distinguished triangle
\[
{}^pIC_X\mathbb Z \longrightarrow {}^p_+IC_X\mathbb Z \longrightarrow E[1]\longrightarrow .
\]
They further indicate that this local correction should be governed by the discriminant of
the exceptional intersection form of the minimal resolution and, in the hypersurface case,
by the integral variation map $T-\id$ on vanishing cohomology; see
\cite[Remark~6.4]{JungSaitoFactoriality}. The purpose of this paper is to make these
indications precise.

The discrepancy between ordinary middle perversity and dual middle perversity disappears
after tensoring with $\mathbb Q$, but it may persist integrally. For a normal surface germ,
this discrepancy is concentrated at the singular point and is measured by the finite group
$E$. The main result of the paper is that $E$ admits the following compatible realizations:
\[
E
\quad\cong\quad
H^2(L,\mathbb Z)_{\tors}
\quad\cong\quad
\Lambda^\vee/\Lambda
\quad\cong\quad
\coker(T-\id)_{\tors},
\]
where $L$ is the link of $(X,0)$, $\Lambda$ is the exceptional lattice of the minimal
resolution, and $T$ is the Milnor monodromy in the isolated hypersurface case. Under the
additional hypothesis that $(T-\id)\otimes_{\mathbb Z}\mathbb Q$ is an isomorphism, one
obtains the determinant formula
\[
|E|=|\det(T-\id)|.
\]

Thus the same local correction is visible in four different ways: as the discrepancy
between the two integral middle extensions, as torsion in the second cohomology of the
link, as the discriminant of the exceptional lattice, and, for isolated hypersurface
surface singularities, as torsion in the cokernel of the integral variation map. In this
sense, $E$ is a local integral discriminant invariant of the singularity.

We now state the main results.

\begin{theorem}[Integral perverse obstruction]
Let $(X,0)$ be a germ of a normal complex analytic surface, and define
\[
E:=H^0({}^p_+IC_X\mathbb Z)_0.
\]
Then the natural morphism
\[
{}^pIC_X\mathbb Z \longrightarrow {}^p_+IC_X\mathbb Z
\]
has cone supported at $0$, and there is a self-dual distinguished triangle
\[
{}^pIC_X\mathbb Z \longrightarrow {}^p_+IC_X\mathbb Z \longrightarrow E[1]\longrightarrow .
\]
Moreover, $E$ is a finite abelian group depending only on the analytic germ $(X,0)$.
\end{theorem}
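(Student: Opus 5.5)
We work locally: shrink $X$ to a contractible Stein neighbourhood of $0$ with $U:=X\setminus\{0\}$ smooth (normality gives an isolated singularity), and write $j\colon U\hookrightarrow X$, $i\colon\{0\}\hookrightarrow X$. We use the Deligne formulas for the two intersection complexes. On $U$ both restrict to $\mathbb Z_U[2]$. With middle perversity, ${}^pIC_X\mathbb Z=\tau_{\le -2}Rj_*\mathbb Z_U[2]$. With dual middle perversity, ${}^p_+IC_X\mathbb Z=\tau_{\le -1}Rj_*\mathbb Z_U[2]$, but with the degree $-1$ stalk cohomology replaced by its torsion part. Equivalently, ${}^p_+IC_X\mathbb Z=\mathbb D({}^pIC_X\mathbb Z)$ up to the standard shift, and it is the intermediate extension for the perverse $t$-structure ${}^p_+$.

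The stalk cohomology at $0$ is $H^k(Rj_*\mathbb Z_U[2])_0=H^{k+2}(L,\mathbb Z)$, where $L$ is the link, a closed oriented $3$-manifold. The natural map ${}^pIC\to{}^p_+IC$ is the truncation map. It is an isomorphism on $U$, so its cone $C$ satisfies $C=i_*i^*C$ and is supported at $0$. Its only stalk cohomology is in degree $-1$, where it equals $H^0({}^p_+IC_X\mathbb Z)_0$. Here the paper's indexing convention must be matched: $E$ is the unique nonzero stalk of the cone, shifted so that it appears as $E[1]$. This gives the triangle ${}^pIC\to{}^p_+IC\to E[1]\to$, with $E\cong H^1(L,\mathbb Z)$ or $H^2(L,\mathbb Z)_{\tors}$ depending on where the truncations land. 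The key point is that it is \emph{torsion}. Normality makes $U$ connected, and the defining condition of the ${}^p_+$ extension keeps only the torsion part of the critical cohomology. So $E=H^2(L,\mathbb Z)_{\tors}$, which is finite because $L$ is a compact manifold.

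For self-duality, apply $\mathbb D$ to the triangle. We have $\mathbb D({}^pIC_X\mathbb Z)\cong{}^p_+IC_X\mathbb Z$ and conversely, by duality of the integral perversities $p$ and $p_+$ (Goresky--MacPherson and Deligne). For a finite group $E$ at a point, $\mathbb D(E[1])\cong R\Hom(E,\mathbb Z)[-1]=\Ext^1(E,\mathbb Z)=E^\vee$, where $E^\vee:=\Hom(E,\mathbb Q/\mathbb Z)$ is placed so that it reappears as $E^\vee[1]$ after rotating. Dualizing therefore yields a triangle ${}^pIC\to{}^p_+IC\to E^\vee[1]\to$ with the same first map, since the dual of the canonical map is the canonical map. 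Uniqueness of the cone gives $E\cong E^\vee$, and this is what self-duality means. Concretely, the isomorphism is the linking form on $H^2(L)_{\tors}=H_1(L)_{\tors}$, which is nondegenerate by Poincaré duality on $L$.

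Independence from choices holds because $L$, $U$ and the complexes are determined up to canonical isomorphism by the germ, by the conic structure theorem.

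The main obstacle is the bookkeeping. We must check that the defining triangle of ${}^p_+IC$ isolates exactly the torsion of the critical group and that the shifts make the cone $E[1]$ rather than $E$ or $E[2]$. We must also verify that the dual of the natural map is again the natural map, so the dual triangle is identified with the original one. We would settle both by computing stalks and costalks of $i^*$ and $i^!$ of both complexes in terms of the cohomology of $L$.
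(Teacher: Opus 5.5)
Your route (write both complexes as truncations of $Rj_*\mathbb Z_U[2]$, read off stalks from $H^*(L,\mathbb Z)$, and dualize) would be more self-contained than the paper's. The paper proves only support at $0$ and the degree range $\{-1,0,1\}$ for the cone (Proposition~\ref{prop:cone-range-and-boundary}). It then imports single-degree concentration, finiteness and self-duality from Jung--Saito, Remark~6.4. However, the bookkeeping you postpone is exactly where your argument fails, because your truncations are off by one. For a point stratum, $j_{!*}=\tau_{\le -1}Rj_*$ \cite[Prop.~2.1.11]{BBD82}. So ${}^pIC_X\mathbb Z=\tau_{\le -1}Rj_*\mathbb Z_U[2]$, with stalk $\mathbb Z$ in degree $-2$ and $H^1(L,\mathbb Z)$ in degree $-1$. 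The same proposition, applied to the ${}^p_+$ $t$-structure on the point (where ${}^p_+D^{\le -1}$ means $H^{>0}=0$ and $H^0$ torsion), shows that ${}^p_+IC_X\mathbb Z$ keeps all of $H^{\le -1}$ \emph{and} the torsion of $H^0=H^2(L,\mathbb Z)$. Your $\tau_{\le -2}Rj_*\mathbb Z_U[2]$ has $H^0(i^!\,\tau_{\le -2}Rj_*\mathbb Z_U[2])\cong H^1(L,\mathbb Z)\neq 0$ whenever $b_1(L)>0$, so it is not the intermediate extension. Also, $H^1(L,\mathbb Z)=\Hom(H_1(L,\mathbb Z),\mathbb Z)$ is torsion-free, so your ${}^p_+IC_X\mathbb Z$ has zero stalk in degrees $-1$ and $0$. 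With your formulas the two complexes coincide for every germ and $H^0({}^p_+IC_X\mathbb Z)_0=0$, contradicting $E\cong\mathbb Z/2$ for $A_1$. Saying that $E$ is $H^1(L,\mathbb Z)$ or $H^2(L,\mathbb Z)_{\tors}$ ``depending on where the truncations land'' leaves the central identification unproved.

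Now use the correct formulas, with $A:={}^pIC_X\mathbb Z$ and $B:={}^p_+IC_X\mathbb Z$. The stalk sequence has $H^k(i^*A)\to H^k(i^*B)$ bijective for $k\le -1$ and $H^0(i^*A)=H^1(i^*A)=0$. It gives $\operatorname{Cone}(u)\cong i_*E$ with $E$ in degree $0$, not $-1$: a degree-$0$ stalk of $B$ cannot become the degree-$(-1)$ stalk of $E[1]$. Your duality step breaks for the same reason. For finite $E$ one has $R\Hom(E,\mathbb Z)=E^\vee[-1]$, so $\mathbb D(i_*E[1])=i_*E^\vee[-2]$. Dualizing $A\to B\to E[1]\to$ therefore yields $A\to B\to E^\vee[-1]\to$, which is incompatible with the original unless $E=0$. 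The consistent self-dual package is $A\to B\to i_*E\xrightarrow{+1}$. Since $\mathbb D(i_*E)=i_*E^\vee[-1]$, dualizing returns a triangle with cone $i_*E^\vee$, whence $E\cong E^\vee$. This agrees with Proposition~\ref{prop:cone-range-and-boundary}(vi) once $H^0(i^*A)=0$ is used, and with Theorem~\ref{thm:app-rational-codim2-gluing}, which records the cone as $i_*E$. So no choice of convention produces the ``$E[1]$'' of the statement; the paper reaches it only by citation. To complete your argument:
\begin{enumerate}
\item[(a)] justify the two truncation formulas;
\item[(b)] compute the cone as above;
\item[(c)] prove $\mathbb D(u)=u$ from the injectivity of $\Hom(A,B)\to\Hom(\mathbb Z_U[2],\mathbb Z_U[2])$. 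This holds because $\Hom(i^*A,i^!B)=0$, as $i^*A$ lives in degrees $\le -1$ and $i^!B$ in degrees $\ge 1$.
\end{enumerate}
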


\begin{theorem}[Resolution-lattice realization]
Let
\[
\pi:\widetilde X\to X
\]
be the minimal resolution of $(X,0)$, let $\Lambda$ be the lattice generated by the
irreducible exceptional curves, and let $M$ be the corresponding intersection matrix.
Then
\[
E\cong \Lambda^\vee/\Lambda.
\]
In particular,
\[
|E|=|\det(M)|.
\]
\end{theorem}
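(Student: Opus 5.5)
We prove the theorem in two steps: first identify $E$ with the torsion of $H^2(L,\mathbb Z)$, where $L$ is the link, and then compute that torsion from the plumbing description of the minimal resolution.

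For the first step, write $j:U=X\setminus\{0\}\hookrightarrow X$ and $i:\{0\}\hookrightarrow X$, and work on a small conic neighborhood, so that stalks at $0$ are computed by the cohomology of $L\simeq U\cap B_\epsilon$. In the normalization where ${}^pIC_X\mathbb Z=\tau_{\le -1}Rj_*\mathbb Z_U[2]$, the dual complex ${}^p_+IC_X\mathbb Z$ differs from it only in degree $0$. There it picks up the torsion part of the next cohomology: in stalk terms,
\[
H^0({}^p_+IC_X\mathbb Z)_0 \cong H^2(L,\mathbb Z)_{\tors}.
\]
This follows from the Deligne-type description of the $p_+$ truncation, which keeps $\tau_{\le 0}$ and then kills the torsion-free quotient in degree $0$. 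Equivalently, the cone $E[1]$ of the natural map is computed by the exact sequence of the truncation triangle. I will take this description from the first theorem, which states that the cone is supported at $0$. So $E\cong H^2(L,\mathbb Z)_{\tors}$.

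For the second step, let $N=\pi^{-1}(B_\epsilon)$ be a tubular neighborhood of the exceptional divisor $D=\bigcup C_i$. Then $N$ deformation retracts onto $D$, and $\partial N=L$. The long exact sequence of the pair $(N,L)$ gives
\[
H^2(N,L)\to H^2(N)\to H^2(L)\to H^3(N,L)\to H^3(N).
\]
Lefschetz duality gives $H^k(N,L)\cong H_{4-k}(N)\cong H_{4-k}(D)$. So $H^2(N,L)\cong H_2(D)=\Lambda=\mathbb Z^r$, freely generated by the classes $[C_i]$. Also $H^2(N)\cong H^2(D)\cong\Hom(H_2(D),\mathbb Z)=\Lambda^\vee$; this uses $H_1(D)$ free, which holds because $D$ is a union of curves. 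The first map $\Lambda\to\Lambda^\vee$ is $x\mapsto (x\cdot -)$, i.e.\ it is given by the intersection matrix $M$.

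Normality together with Mumford/Grauert gives that $M$ is negative definite. Hence the map $\Lambda\to\Lambda^\vee$ is injective with finite cokernel $\Lambda^\vee/\Lambda$ of order $|\det M|$. The remaining terms are $H^3(N,L)\cong H_1(D)$, which is free, and $H^3(N)=H^3(D)=0$. The sequence therefore becomes
\[
0\to \Lambda^\vee/\Lambda\to H^2(L)\to H_1(D)\to 0.
\]
Since $H_1(D)$ is free, the sequence splits. The finite group $\Lambda^\vee/\Lambda$ is then exactly the torsion subgroup of $H^2(L)$. Combining with the first step gives $E\cong\Lambda^\vee/\Lambda$, and $|E|=|\det M|$.

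I expect the main obstacle to be the first step. Identifying $E$ precisely with $H^2(L)_{\tors}$ requires pinning down the conventions for ${}^p_+$: shifts, and whether the torsion appears in stalk degree $0$ or in costalk degree $1$. It also requires checking that the identification is canonical, which is needed for compatibility with self-duality. I will try the costalk computation $i^!$ of both complexes and compare the two through the triangle. Two points in the second step need a quick check but are standard. One is the free-ness of $H_1(D)$ and $H^2(D)$; for $H^2(D)$ I would use universal coefficients with $H_1(D)$ free. The other is that the coboundary map is exactly $M$, which I would justify via Poincaré duality on the components.
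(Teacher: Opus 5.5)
Your proposal is correct. It has the same skeleton as the paper's proof: first $E\cong H^2(L,\mathbb Z)_{\tors}$, then link torsion $\cong\Lambda^\vee/\Lambda$ via the pair $(N,L)$ and Lefschetz duality, then Smith normal form for the order. The execution differs in both steps.

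For the first step, the paper (Proposition~\ref{prop:E-link-model}) does not actually compute the stalk. It imports the single-degree concentration from the Jung--Saito triangle and matches it against the stalks of $Rj_*\mathbb Z_U[2]$. Your route uses the recollement formula $j_{!*}=\tau^{\{0\}}_{\le -1}Rj_*$ for the glued $p_+$ $t$-structure, whose truncation on the point keeps $\tau_{\le -1}$ plus the torsion of $H^0$. This computes $H^0({}^p_+IC_X\mathbb Z)_0=H^2(L,\mathbb Z)_{\tors}$ directly. It is more self-contained, it proves the concentration the paper cites, and it does not need the first theorem at all. With this description the stalk sequence shows the cone is $i_*E$ in degree $0$, not $E[1]$. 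That is harmless here, since $E$ is defined as a stalk.

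For the second step, the paper runs the homology sequence $H_2(N)\to H_2(N,L)\to H_1(L)\to H_1(N)$ and proves $H_2(N,L)$ is torsion-free. It then identifies $H_1(L)_{\tors}=\ker(H_1(L)\to H_1(N))$ with $\Lambda^\vee/\Lambda$ (Lemma~\ref{lem:torsion-boundary-discriminant}) and converts to $H^2$ by universal coefficients. Your cohomological version lands in $H^2(L,\mathbb Z)$ directly. It gives the split extension $0\to\Lambda^\vee/\Lambda\to H^2(L,\mathbb Z)\to H_1(D)\to 0$, which also identifies the free part.

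When you check freeness of $H_1(D)$ and $H_2(D)=\bigoplus\mathbb Z[C_i]$, do not assume normal crossings, as the paper's Lemmas~\ref{lem:H2N-is-lattice} and~\ref{lem:H1N-free} do. The minimal resolution need not have them: for $x^2+y^3+z^7$ its exceptional divisor is a single cuspidal rational curve of self-intersection $-1$. Instead, use that $D$ is homotopy equivalent to its normalization $\widetilde D=\bigsqcup\widetilde C_i$ with finitely many $1$-cells attached. This gives all three facts you need: $H_1(D)$ free, $H_2(D)=\bigoplus\mathbb Z[C_i]$, and $H^3(D)=0$.
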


\begin{theorem}[Hypersurface-monodromy realization]
Assume that $(X,0)$ is an isolated hypersurface surface singularity. Then
\[
E\cong \coker(T-\id)_{\tors}.
\]
\end{theorem}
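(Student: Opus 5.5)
We prove the statement by chaining it to the link realization $E\cong H^2(L,\mathbb Z)_{\tors}$, which we take as established for any normal surface germ (it is the first of the compatible realizations listed in the introduction). For an isolated hypersurface surface singularity $f:(\mathbb C^3,0)\to(\mathbb C,0)$, the Milnor fibration gives the Milnor fibre $F$, which has the homotopy type of a bouquet of $\mu$ two-spheres. The link $L=f^{-1}(0)\cap S_\varepsilon$ is identified with $\partial F$. It then suffices to show
\[
H^2(L,\mathbb Z)\cong \coker(T-\id)\quad\text{or at least}\quad H^2(L,\mathbb Z)_{\tors}\cong\coker(T-\id)_{\tors},
\]
where $T$ acts on $H^2(F,\mathbb Z)\cong\mathbb Z^\mu$.

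\textbf{Key steps.}

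\emph{Step 1: cohomology of the complement.} We use the Wang sequence of the fibration $S_\varepsilon\setminus L\to S^1$ with fibre $F$ and monodromy $T$:
\[
H^2(F)\xrightarrow{T-\id}H^2(F)\to H^3(S_\varepsilon\setminus L)\to H^3(F)=0 .
\]
This gives $H^3(S_\varepsilon\setminus L,\mathbb Z)\cong\coker(T-\id)$.

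\emph{Step 2: relate the complement to the link.} By Alexander duality in $S^5$,
\[
H^3(S^5\setminus L)\cong \tilde H_{1}(L).
\]
By universal coefficients and Poincaré duality on the closed oriented $3$-manifold $L$,
\[
H_1(L)_{\tors}\cong H^2(L)_{\tors}, \qquad H_1(L)\cong H^2(L).
\]
Combining with Step 1, $\coker(T-\id)\cong H^2(L,\mathbb Z)$, and taking torsion subgroups yields the claim.

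\emph{Alternative route.} One can equally use Milnor's identification of $H_1(L)$ with $\coker$ of the variation/Seifert form. Equivalently, one can use the exact sequence of the pair $(F,\partial F)$,
\[
H^2(F,\partial F)\xrightarrow{j}H^2(F)\to H^2(\partial F)\to H^3(F,\partial F)\cong H_1(F)=0 ,
\]
so that $H^2(L)\cong\coker(j)$. Here $j$ is the intersection form, which relates to $T-\id$ through the variation isomorphism $\mathrm{Var}$ with $T-\id=\mathrm{Var}\circ j$. Since $\mathrm{Var}$ is an isomorphism, $\coker j\cong\coker(T-\id)$.

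\textbf{Main obstacle.} The main difficulty is Step 2: making the identification canonical enough, or at least a genuine isomorphism of groups rather than just equal orders. In particular, one must check that the torsion of $\coker(T-\id)$ corresponds exactly to $H^2(L)_{\tors}$ and not only up to extension by the free part. The Alexander duality route gives a full isomorphism $\coker(T-\id)\cong H_1(L)\cong H^2(L)$, which is clean; the only care needed is the degree bookkeeping in $S^5$ for a $3$-dimensional $L$. I would present the variation-map argument as the primary proof, since it matches the Jung--Saito formulation via $T-\id$, and cite the Wang-sequence argument as a check.
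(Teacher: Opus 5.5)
Your proof is correct. At the top level it has the same shape as the paper's: the link realization $E\cong H^2(L,\mathbb Z)_{\tors}$, then $H^2(L,\mathbb Z)\cong\coker(T-\id)$, then pass to torsion. The middle step is done differently, and your version is the right one.

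The paper gets $H^2(L,\mathbb Z)\cong\coker(T-\id)$ by claiming that the punctured Milnor tube $B_\varepsilon\cap f^{-1}(D_\eta^\ast)$ is homotopy equivalent to $L$. It cites Milnor's cone theorem, but that theorem concerns the central fibre $X\cap B_\varepsilon$, not the tube. Removing the central fibre removes a $4$-dimensional cone, not a point. The punctured tube fibres over $D_\eta^\ast\simeq S^1$ with simply connected fibre $F$, so its $H^1$ is $\mathbb Z$. It is homotopy equivalent to the link complement $S_\varepsilon\setminus L$, not to $L$; for $A_1$ one has $H^1(\mathbb{R}P^3,\mathbb Z)=0$.

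The paper's reduced sequence $0\to H^1(L)\to H^2(F)\to H^2(F)\to H^2(L)\to 0$ also does not match the degree pattern of its own displayed Wang sequence. Its final isomorphism is nevertheless true, and your argument proves it correctly. You run the Wang sequence on the actual total space $S_\varepsilon\setminus L$ to get $H^3(S_\varepsilon\setminus L)\cong\coker(T-\id)$. Alexander duality in $S^5$ and Poincar\'e duality on $L$ then give $\coker(T-\id)\cong H_1(L,\mathbb Z)\cong H^2(L,\mathbb Z)$. Likewise $\ker(T-\id)\cong H^2(S_\varepsilon\setminus L)\cong H_2(L)\cong H^1(L)$. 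This is the ``Wang sequence and Alexander duality'' argument of Milnor that Jung--Saito refer to.

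Your alternative route through the pair $(F,\partial F)$ is also sound, but the composition order needs fixing. With $\mathrm{Var}:H^2(F)\to H^2(F,\partial F)$, one has $T-\id=j\circ\mathrm{Var}$ on $H^2(F)$. The composite $\mathrm{Var}\circ j$ is the corresponding endomorphism of $H^2(F,\partial F)$. Since $\mathrm{Var}$ is an isomorphism, $\operatorname{im}(T-\id)=\operatorname{im}(j)$, so $\coker(T-\id)=\coker(j)\cong H^2(L,\mathbb Z)$.

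That route needs the variation isomorphism as an extra theorem, which is itself usually proved by duality arguments. So the Alexander-duality argument is the more self-contained one to present as the primary proof, contrary to your stated preference.
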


\begin{corollary}[Determinant refinement]
Assume, in addition, that
\[
(T-\id)\otimes_{\mathbb Z}\mathbb Q
\]
is an isomorphism on
$H^2_{\mathrm{van}}(F,\mathbb Z)\otimes_{\mathbb Z}\mathbb Q$. Then
\[
E\cong \coker(T-\id),
\qquad
|E|=|\det(T-\id)|.
\]
\end{corollary}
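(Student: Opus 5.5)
The corollary should follow from the hypersurface-monodromy realization, so the plan is to use it and reduce everything to elementary facts about integer matrices. Write $H:=H^2_{\mathrm{van}}(F,\mathbb Z)$. For an isolated hypersurface singularity the Milnor fiber $F$ is a bouquet of $2$-spheres, so $H$ is a free abelian group of rank $\mu$, the Milnor number. Hence $T-\id$ is an endomorphism of $\mathbb Z^\mu$, given by an integer $\mu\times\mu$ matrix $A$ once a basis is chosen. The theorem above gives $E\cong \coker(T-\id)_{\tors}$. It therefore suffices to prove two things under the hypothesis that $A\otimes\mathbb Q$ is invertible:
\begin{enumerate}
\item $\coker(A)$ is entirely torsion;
\item $|\coker(A)|=|\det A|$.
\end{enumerate}

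For the first point, tensor the exact sequence $H\xrightarrow{A}H\to\coker(A)\to 0$ with $\mathbb Q$. This is right exact (indeed exact, since $\mathbb Q$ is flat), so $\coker(A)\otimes\mathbb Q\cong\coker(A\otimes\mathbb Q)=0$. A finitely generated abelian group with vanishing rationalization is finite. Thus $\coker(A)=\coker(A)_{\tors}$, and so $E\cong\coker(T-\id)$.

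For the second point, bring $A$ to Smith normal form: $A=PDQ$ with $P,Q\in GL_\mu(\mathbb Z)$ and $D=\mathrm{diag}(d_1,\dots,d_\mu)$. Since $A$ is rationally invertible, every $d_i$ is nonzero. Then
\[
\coker(A)\cong\coker(D)\cong\bigoplus_i \mathbb Z/d_i\mathbb Z,
\]
which has order $\prod_i|d_i|=|\det D|=|\det A|$, because $\det P,\det Q=\pm1$. The determinant does not depend on the chosen basis, since changing basis conjugates $A$. Hence $|E|=|\det(T-\id)|$.

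No step here is a real obstacle: the substance lies entirely in the hypersurface-monodromy theorem. The only points needing care are that $H$ is free of finite rank (Milnor's bouquet theorem) and that the torsion subgroup in that theorem is taken inside the full cokernel, so that it equals the whole cokernel once the cokernel is shown to be finite.
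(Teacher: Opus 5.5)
Your argument is correct and is essentially the paper's proof. The paper likewise takes $E\cong\coker(T-\id)_{\tors}$ from Theorem~\ref{thm:monodromy-realization}, uses rational invertibility to see that $\coker(T-\id)$ is finite and hence equal to its torsion subgroup, and gets $|\coker(T-\id)|=|\det(T-\id)|$ from Smith normal form (Lemmas~\ref{lem:determinant-cokernel} and~\ref{lem:coker-det}); you simply write out the finiteness step, via $\coker(A)\otimes\mathbb Q=0$, and the Smith normal form computation that the paper leaves to those lemmas.
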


\begin{corollary}[Rational double points]
For rational double points of type $A$, $D$, and $E$, the perverse, topological,
resolution-theoretic, and monodromy-theoretic realizations of $E$ are compatible. In
type $A_k$, one has
\[
E\cong \mathbb Z/(k+1)\mathbb Z,
\qquad
|E|=k+1.
\]
\end{corollary}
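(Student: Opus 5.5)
We deduce the Rational double points corollary from the three theorems and the determinant refinement above, and then compute the type $A_k$ case explicitly. A rational double point $(X,0)$ of type $A$, $D$, or $E$ is a normal surface germ. It is also an isolated hypersurface singularity, given by $x^2+y^2+z^{k+1}=0$ in type $A_k$ and by the standard equations in types $D$ and $E$. So the Integral perverse obstruction, Resolution-lattice realization, and Hypersurface-monodromy realization theorems all apply. Together with the topological identification $E\cong H^2(L,\mathbb Z)_{\tors}$, this gives the chain of isomorphisms
\[
E\cong H^2(L,\mathbb Z)_{\tors}\cong \Lambda^\vee/\Lambda\cong \coker(T-\id)_{\tors}.
\]
This chain is exactly the asserted compatibility of the four realizations. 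The only thing to check is that each realization is computed from the same germ. This holds because $E$ depends only on the analytic germ.

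For type $A_k$ we compute each side. The minimal resolution has exceptional set a chain of $k$ smooth rational $(-2)$-curves, so $M=-C_{A_k}$, where $C_{A_k}$ is the Cartan matrix of $A_k$. The standard tridiagonal recursion $d_k=2d_{k-1}-d_{k-2}$ with $d_1=2$, $d_2=3$ gives $\det C_{A_k}=k+1$, hence $|E|=|\det M|=k+1$. To pin down the group structure, we show that $\Lambda^\vee/\Lambda$ is cyclic. Take the dual basis element $e_1^\vee$ dual to the end curve $C_1$. Solving $Mv=e_1$ gives $v=\frac{-1}{k+1}(k,k-1,\dots,1)$, which has order exactly $k+1$ modulo $\Lambda$ because the last coordinate is $-1/(k+1)$. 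Therefore $\Lambda^\vee/\Lambda\cong\mathbb Z/(k+1)\mathbb Z$. Equivalently, the Smith normal form of $C_{A_k}$ is $\mathrm{diag}(1,\dots,1,k+1)$: the $(k-1)$-minor obtained by deleting the first row and last column is $\pm1$.

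As an independent consistency check, we also verify the monodromy side. By Thom–Sebastiani, $f=x^2+y^2+z^{k+1}$ has Milnor monodromy $T=(-1)(-1)T_{z^{k+1}}$, and $T_{z^{k+1}}$ acts on $\tilde H^0$ of $k+1$ points as the cyclic permutation restricted to the reduced part. Its eigenvalues are the nontrivial $(k+1)$-st roots of unity. So $T-\id$ is rationally invertible, and the determinant refinement applies. It gives $|E|=|\det(T-\id)|=\prod_{\zeta^{k+1}=1,\zeta\ne1}|1-\zeta|=k+1$, using the value at $t=1$ of $\Phi(t)=1+t+\dots+t^k$. The cokernel of $T-\id$ on $\mathbb Z[t]/(\Phi)$ is $\mathbb Z[t]/(\Phi,t-1)\cong\mathbb Z/(k+1)$, which agrees with the lattice computation. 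On the topological side, the link is the lens space $L(k+1,k)$ and $H^2(L,\mathbb Z)\cong\mathbb Z/(k+1)$, which agrees again.

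The main obstacle is in the monodromy check: the Thom–Sebastiani identification must be made integrally rather than just rationally. We need the integral join isomorphism $\tilde H(F_f)\cong \tilde H(F_{x^2})\otimes\tilde H(F_{y^2})\otimes\tilde H(F_{z^{k+1}})$ with $T=T_1\otimes T_2\otimes T_3$, and we must track the sign and degree shifts. Since the theorems already guarantee agreement, this check can be treated as confirmatory, with the lattice computation carrying the proof of the $A_k$ statement. For types $D$ and $E$ the corollary only asserts compatibility, which follows formally. For the record, the Cartan determinants there are $4$, $3$, $2$, $1$ for $D_n$, $E_6$, $E_7$, $E_8$.
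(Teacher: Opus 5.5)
Your proof is correct and has the same overall structure as the paper's. Compatibility in all ADE types is read off from the chain $E\cong H^2(L,\mathbb Z)_{\tors}\cong\Lambda^\vee/\Lambda\cong\coker(T-\id)_{\tors}$ given by the realization theorems, and type $A_k$ is then computed on each side, as in the worked example of Section~6.

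The difference is in how the $A_k$ computations are carried out:
\begin{itemize}
\item The paper extracts only the order $|\det M_{A_k}|=k+1$ from the lattice. It gets the group structure $\mathbb Z/(k+1)\mathbb Z$ from the link being a lens space.
\item On the monodromy side, the paper cites the Coxeter-transformation description and evaluates $(t^{k+1}-1)/(t-1)$ at $t=1$. This again gives only an order.
\item You prove cyclicity of $\Lambda^\vee/\Lambda$ directly: the dual vector $-\frac{1}{k+1}(k,\dots,1)$ has order $k+1$, equivalently the Smith form is $\mathrm{diag}(1,\dots,1,k+1)$.
\item Your integral Thom--Sebastiani argument identifies $H^2_{\mathrm{van}}(F,\mathbb Z)\cong\mathbb Z[t]/(\Phi)$ as a $\mathbb Z[t]$-module. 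Hence $\coker(T-\id)\cong\mathbb Z/(k+1)\mathbb Z$ as a group, not merely in order.
\end{itemize}

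What your route buys is two independent determinations of the group structure, one on the lattice side and one on the monodromy side. The $A_k$ statement thus becomes a genuine cross-check rather than a comparison of orders. The cost is the integral join theorem, which is harmless here: all reduced Milnor-fiber cohomology is free, so no Tor terms appear. The paper's route is shorter but leans on the classical lens-space and Coxeter identifications.

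Finally, your $L(k+1,k)$ and the paper's $L(k+1,1)$ differ only by orientation. In fact $L(k+1,k)$ is what the paper's own rule $\frac1n(1,q)\mapsto L(n,q)$ assigns to $A_k=\frac{1}{k+1}(1,k)$. Either way, $H_1(L,\mathbb Z)\cong\mathbb Z/(k+1)\mathbb Z$.
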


The proof proceeds by passing from the local perverse correction to topology, from the
link to the exceptional lattice, and, in the hypersurface case, from link cohomology to
Milnor monodromy.

First, in Section~2, we isolate the local correction term from the discrepancy between \({}^pIC_X\mathbb Z\) and \({}^p_+IC_X\mathbb Z\) and prove a formal cohomological range statement for the discrepancy cone.  In Appendix~\ref{app:rational-codim2}, we prove the rational codimension-two gluing theorem by identifying the relevant Friedman
torsion-sensitive perverse hearts \cite{FriedmanGenIH} with the BBD \cite{BBD82} ordinary and dual integral middle-perversity hearts.  For the full general codimension-two concentration and self-duality, we still rely on the local surface statement of Jung--Saito \cite{JungSaitoFactoriality}.

Second, in Section~3, we identify $E$ with the torsion subgroup of the second cohomology
of the link:
\[
E\cong H^2(L,\mathbb Z)_{\tors}.
\]

Third, in Section~4, we pass to the minimal resolution and prove
\[
H^2(L,\mathbb Z)_{\tors}\cong \Lambda^\vee/\Lambda.
\]
This yields the resolution-theoretic realization of $E$ and the formula
\[
|E|=|\det(M)|.
\]

Fourth, in Section~5, for isolated hypersurface surface singularities, the Wang sequence
of the Milnor fibration identifies
\[
H^2(L,\mathbb Z)\cong \coker(T-\id).
\]
Combining this with the topological description of $E$ gives
\[
E\cong \coker(T-\id)_{\tors}.
\]
Under the additional rational-invertibility hypothesis, the whole cokernel is finite, and
one obtains
\[
|E|=|\det(T-\id)|.
\]

Finally, in Section~6, we work out explicit examples, including the ADE singularities and
a non-ADE Brieskorn--Pham family, to illustrate the resulting dictionary.

The paper is organized accordingly. Section~2 treats the local perverse package.
Section~3 gives the topological realization of $E$ through the link. Section~4 identifies
the same group with the discriminant group of the exceptional lattice. Section~5 gives
the monodromy-theoretic realization in the isolated hypersurface case. Section~6 contains
examples. The final sections explain how this local invariant fits into the broader
Jung--Saito picture surrounding factoriality and dual middle perversity.


\section{Integral middle perversities and the local obstruction group}

In this section we set up the two integral middle-perversity packages, define the local group
$E$, and analyze the point-supported correction term measuring the discrepancy between the
ordinary and dual middle extensions in the case of a normal surface germ. The point is to
separate four issues: the formal support statement, the local model provided by
$Rj_*\mathbb Z_U[2]$, the cohomological range forced by the point-stratum conditions, and
the final codimension-two concentration statement.

Throughout this paper, all spaces are complex analytic, and all derived categories are
bounded constructible derived categories with $\mathbb Z$-coefficients unless otherwise
specified. For a complex analytic space $X$, we write $D^b_c(X,\mathbb Z)$ for the
corresponding constructible derived category. We use the standard notation
$i_x:\{x\}\hookrightarrow X$ for the inclusion of a point and $j:U\hookrightarrow X$
for the inclusion of a Zariski-open dense smooth subset. Verdier duality is denoted by
$\mathbb D$.

\subsection{Middle perversity and dual middle perversity over \texorpdfstring{$\mathbb Z$}{Z}}

We begin by recalling the ordinary middle-perversity and dual middle-perversity
$t$-structures on $D^b_c(X,\mathbb Z)$ following \cite{BBD82}. For
$K\in D^b_c(X,\mathbb Z)$, one sets
\[
{}^pD^{\le 0}(X,\mathbb Z)
:=
\Bigl\{
K\in D^b_c(X,\mathbb Z)\ \Big|\
H^k(i_x^*K)=0 \ \text{for all }x\in X,\ k>-\dim_{\mathbb C}\overline{\{x\}}
\Bigr\},
\]
and
\[
{}^pD^{\ge 0}(X,\mathbb Z)
:=
\Bigl\{
K\in D^b_c(X,\mathbb Z)\ \Big|\
H^k(i_x^!K)=0 \ \text{for all }x\in X,\ k<-\dim_{\mathbb C}\overline{\{x\}}
\Bigr\}.
\]
Their heart is the abelian category of perverse sheaves
\[
{}^p\!\operatorname{Perv}(X,\mathbb Z)
:=
{}^pD^{\le 0}(X,\mathbb Z)\cap {}^pD^{\ge 0}(X,\mathbb Z).
\]

With integral coefficients, Verdier duality does not preserve the ordinary middle perversity
in general because torsion may appear in stalk cohomology. One therefore introduces the
dual middle-perversity $t$-structure, denoted here by
${}^p_+D^{\le 0}$ and ${}^p_+D^{\ge 0}$, characterized by
\[
K\in {}^p_+D^{\le 0}(X,\mathbb Z)
\quad\Longleftrightarrow\quad
\mathbb D K \in {}^pD^{\ge 0}(X,\mathbb Z),
\]
and
\[
K\in {}^p_+D^{\ge 0}(X,\mathbb Z)
\quad\Longleftrightarrow\quad
\mathbb D K \in {}^pD^{\le 0}(X,\mathbb Z).
\]
Equivalently, ${}^p_+$ is obtained from ${}^p$ by exchanging torsion and torsion-free
conditions in the critical degree. We shall use the explicit semi-perversity conditions
recorded by Jung--Saito in Remark~6.3: for any Whitney stratum $S$ of complex dimension
$d_S$ with inclusion $i_S:S\hookrightarrow X$,
\[
F^\bullet\in {}^pD^{\le j}
\iff H^k(i_S^*F^\bullet)=0 \ \text{for } k>j-d_S,
\]
\[
F^\bullet\in {}^pD^{\ge j}
\iff H^k(i_S^!F^\bullet)=0 \ \text{for } k<j-d_S,
\]
while the dual semi-perversity conditions are
\[
F^\bullet\in {}^p_+D^{\le j}
\iff
\begin{cases}
H^k(i_S^*F^\bullet)=0 & \text{for } k>j-d_S+1,\\
H^{j-d_S+1}(i_S^*F^\bullet)\ \text{torsion},
\end{cases}
\]
and
\[
F^\bullet\in {}^p_+D^{\ge j}
\iff
\begin{cases}
H^k(i_S^!F^\bullet)=0 & \text{for } k<j-d_S,\\
H^{j-d_S}(i_S^!F^\bullet)\ \text{torsion-free}.
\end{cases}
\]
See \cite[Remark~6.3, formulas (6.9)--(6.10)]{JungSaitoFactoriality}. These are the
pointwise conditions used below.

The key point for us is that the two hearts agree after tensoring with $\mathbb Q$, but they
need not agree over $\mathbb Z$. Thus the difference between
${}^p\!\operatorname{Perv}(X,\mathbb Z)$ and
${}^p_+\!\operatorname{Perv}(X,\mathbb Z)$
is a purely integral phenomenon controlled by torsion in local stalk and costalk cohomology.

\subsection{The complexes \texorpdfstring{${}^pIC_X\mathbb Z$}{pIC} and \texorpdfstring{${}^p_+IC_X\mathbb Z$}{p+IC}}

Let $X$ be a reduced irreducible complex analytic space of pure dimension $n$, and let
$j:X_{\mathrm{reg}}\hookrightarrow X$ be the inclusion of the smooth locus. The ordinary
middle-perversity intersection complex with $\mathbb Z$-coefficients is
\[
{}^pIC_X\mathbb Z
:=
{}^pj_{!*}\,\mathbb Z_{X_{\mathrm{reg}}}[n],
\]
where ${}^pj_{!*}$ denotes intermediate extension in the ordinary middle-perversity heart.
Likewise, the dual middle-perversity intersection complex is
\[
{}^p_+IC_X\mathbb Z
:=
{}^p_+j_{!*}\,\mathbb Z_{X_{\mathrm{reg}}}[n].
\]

Both complexes extend the shifted constant local system from the smooth locus:
\[
j^*({}^pIC_X\mathbb Z)\cong \mathbb Z_{X_{\mathrm{reg}}}[n],
\qquad
j^*({}^p_+IC_X\mathbb Z)\cong \mathbb Z_{X_{\mathrm{reg}}}[n].
\]
Hence any discrepancy between them is necessarily supported on the singular set.

When $X$ is normal of complex dimension $2$, the singular locus is discrete. Therefore, for
a germ $(X,0)$ of a normal surface singularity, after shrinking $X$ we may assume that $0$
is the unique singular point and that the difference between ${}^pIC_X\mathbb Z$ and
${}^p_+IC_X\mathbb Z$ is supported at $0$.

We shall use repeatedly that both constructions are functorial with respect to analytic
isomorphisms of germs. Thus any local invariant extracted from
${}^pIC_X\mathbb Z$ and ${}^p_+IC_X\mathbb Z$ depends only on the analytic type of the germ.

\subsection{Definition of the local group \texorpdfstring{$E$}{E}}

From now on, let $(X,0)$ be a germ of a normal complex analytic surface. Following
Jung--Saito, we define
\[
E:=H^0({}^p_+IC_X\mathbb Z)_0,
\]
that is, the degree-zero stalk cohomology of the dual middle-perversity intersection complex
at the singular point. This is exactly the group introduced in
\cite[Remark~6.4]{JungSaitoFactoriality}.

Since the two intersection complexes agree on $X\setminus\{0\}$, the group $E$ measures the
failure of the two integral middle extensions to coincide at the singular point. It is
therefore the local integral correction term attached to the discrepancy between the ordinary
and dual middle-perversity packages.

We emphasize that this correction is invisible after tensoring with $\mathbb Q$. Indeed, the
ordinary and dual intersection complexes become canonically isomorphic over $\mathbb Q$, so
$E$ is a purely torsion-theoretic invariant.

\subsection{The self-dual distinguished triangle}

We now analyze the local discrepancy between the two middle extensions. The first formal
point is that the discrepancy is point-supported. The second is that the point-stratum
semi-perversity bounds force its cohomological amplitude to be very small. The third is
that the local model for the extension problem is
\[
K:=Rj_*\mathbb Z_U[2],
\qquad U:=X\setminus\{0\}.
\]

\begin{lemma}\label{lem:cone-supported-at-zero}
Let $(X,0)$ be a germ of a normal complex analytic surface, and let
\[
u:{}^pIC_X\mathbb Z \longrightarrow {}^p_+IC_X\mathbb Z
\]
be the natural morphism induced by the identity on the smooth locus. Then the cone
\[
C^\bullet:=\operatorname{Cone}(u)
\]
is supported at $\{0\}$.
\end{lemma}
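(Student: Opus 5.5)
The plan is to show that $u$ restricts to an isomorphism on $U:=X\setminus\{0\}$. Then the restriction of the cone vanishes there, and the support statement follows at once.

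First I will fix the setting. After shrinking $X$ as in the discussion preceding the definition of $E$, I may assume that $0$ is the unique singular point, so $U=X_{\mathrm{reg}}$ and $j:U\hookrightarrow X$ is the open inclusion. Both intermediate extensions satisfy
\[
j^*\,{}^pj_{!*}\cong \mathrm{id}, \qquad j^*\,{}^p_+j_{!*}\cong \mathrm{id}.
\]
Hence there are canonical isomorphisms
\[
\alpha:j^*({}^pIC_X\mathbb Z)\xrightarrow{\sim}\mathbb Z_U[2], \qquad \beta:j^*({}^p_+IC_X\mathbb Z)\xrightarrow{\sim}\mathbb Z_U[2].
\]

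Next I will make precise what "natural morphism induced by the identity on the smooth locus" means. The cleanest route goes through the factorization ${}^pj_{!*}=\operatorname{Im}({}^pH^0 j_!\to {}^pH^0 j_*)$. Instead I will use adjunction together with the support conditions. Take the composite
\[
{}^pIC_X\mathbb Z\to Rj_*j^*\,{}^pIC_X\mathbb Z\xrightarrow{Rj_*(\beta^{-1}\alpha)} Rj_*j^*\,{}^p_+IC_X\mathbb Z .
\]
To lift it to ${}^p_+IC_X\mathbb Z$, I will use the triangle $i_*i^!\,{}^p_+IC\to {}^p_+IC\to Rj_*j^*\,{}^p_+IC\to$, where $i:\{0\}\hookrightarrow X$. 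The obstruction to the lift lies in $\Hom({}^pIC,\,i_*i^!\,{}^p_+IC[1])=\Hom(i^*\,{}^pIC,\,i^!\,{}^p_+IC[1])$. Uniqueness of the lift is governed by the same group in degree $0$. The stalk condition for ${}^pIC$ at the point stratum ($d_S=0$) gives $i^*\,{}^pIC\in D^{\le -1}$. The costalk condition for ${}^p_+IC$ gives $i^!\,{}^p_+IC\in D^{\ge 0}$, with $H^0$ torsion-free; in fact $H^0=0$ from the strict costalk condition for intermediate extensions, and I would check this. So both groups are $\Hom$ from something in $D^{\le -1}$ into something in $D^{\ge 0}$ (respectively $D^{\ge -1}$ after the shift), and vanishing should follow. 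This step is the main obstacle. If degree bookkeeping fails, I will fall back on simply taking $u$ as given by the paper's construction (the composite ${}^pIC\to {}^p_+IC$ through the images in ${}^pH^0Rj_*$). The only property I need is that $j^*u=\beta^{-1}\alpha$.

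Given that property, the conclusion is formal. Since $j^*$ is triangulated, $j^*C^\bullet\cong\operatorname{Cone}(j^*u)$. Because $j^*u$ is an isomorphism, this cone is zero. Therefore every stalk $H^k(C^\bullet)_x$ with $x\neq 0$ vanishes, and $C^\bullet\cong i_*i^*C^\bullet$ is supported at $\{0\}$.
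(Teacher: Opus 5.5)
Your argument is correct and is the same as the paper's: both complexes restrict to $\mathbb Z_U[2]$ on $U=X\setminus\{0\}$, so $j^*u$ is an isomorphism and $j^*C^\bullet\cong\operatorname{Cone}(j^*u)=0$. The paper simply takes $u$ as given and does not construct it. Your optional lifting construction of $u$ also works, but only if you use the strict intermediate-extension costalk condition $H^k(i^!\,{}^p_+IC_X\mathbb Z)=0$ for $k\le 0$. Then $i^!\,{}^p_+IC_X\mathbb Z[1]\in D^{\ge 0}$, and both the obstruction group $\Hom(D^{\le -1},D^{\ge 0})$ and the uniqueness group $\Hom(D^{\le -1},D^{\ge 1})$ vanish. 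With merely $D^{\ge 0}$, as in your parenthetical, the obstruction group would be $\Hom(D^{\le -1},D^{\ge -1})$, which need not vanish for degree reasons.
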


\begin{proof}
Both ${}^pIC_X\mathbb Z$ and ${}^p_+IC_X\mathbb Z$ restrict to the same shifted local
system $\mathbb Z_{X_{\mathrm{reg}}}[2]$ on the smooth locus. Hence
\[
j^*u:j^*({}^pIC_X\mathbb Z)\xrightarrow{\sim} j^*({}^p_+IC_X\mathbb Z)
\]
is an isomorphism on $X\setminus\{0\}$. Therefore
\[
j^*C^\bullet \cong 0,
\]
so the support of $C^\bullet$ is contained in $\{0\}$.
\end{proof}

\begin{lemma}\label{lem:point-supported-star-shriek}
Let $i=i_0:\{0\}\hookrightarrow X$, and let $K^\bullet\in D^b_c(X,\mathbb Z)$ be supported
on $\{0\}$. Then the natural morphisms
\[
i_*i^*K^\bullet \longrightarrow K^\bullet,
\qquad
K^\bullet \longrightarrow i_*i^!K^\bullet
\]
are isomorphisms. In particular,
\[
i^*K^\bullet \cong i^!K^\bullet.
\]
\end{lemma}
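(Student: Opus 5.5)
The argument rests on the open–closed decomposition attached to $i=i_0:\{0\}\hookrightarrow X$ and $j:U:=X\setminus\{0\}\hookrightarrow X$. For any $K^\bullet\in D^b_c(X,\mathbb Z)$ there are the standard distinguished triangles
\[
j_!j^*K^\bullet \longrightarrow K^\bullet \longrightarrow i_*i^*K^\bullet \longrightarrow,
\qquad
i_*i^!K^\bullet \longrightarrow K^\bullet \longrightarrow Rj_*j^*K^\bullet \longrightarrow .
\]
In the first, the second map is the adjunction unit $K^\bullet\to i_*i^*K^\bullet$. In the second, the first map is the counit $i_*i^!K^\bullet\to K^\bullet$. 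I read the two morphisms in the statement as these natural adjunction maps; they run in the directions $K^\bullet\to i_*i^*K^\bullet$ and $i_*i^!K^\bullet\to K^\bullet$, and I will prove that both are isomorphisms. The hypothesis that $K^\bullet$ is supported on $\{0\}$ means precisely that $j^*K^\bullet\cong 0$, since the support is the closed set off which all stalk cohomology vanishes.

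Under this hypothesis the outer terms of both triangles vanish: $j_!j^*K^\bullet\cong j_!0=0$ and $Rj_*j^*K^\bullet\cong Rj_*0=0$. In a triangulated category, a morphism whose cone (or fibre) is zero is an isomorphism. Hence $K^\bullet\to i_*i^*K^\bullet$ and $i_*i^!K^\bullet\to K^\bullet$ are both isomorphisms. Composing them gives an isomorphism
\[
i_*i^!K^\bullet\;\xrightarrow{\sim}\;i_*i^*K^\bullet .
\]

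To descend from $i_*$ to the point, I will use that $i$ is a closed embedding, so $i^*i_*\cong\mathrm{id}$ on $D^b_c(\{0\},\mathbb Z)$. Applying $i^*$ to the displayed isomorphism then yields $i^!K^\bullet\cong i^*K^\bullet$. Concretely, both sides compute the derived global sections of $K^\bullet$ over any neighbourhood of $0$, or equivalently its stalk at $0$.

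The only step requiring care is the bookkeeping of which adjunction maps occur and in which direction, since the statement as written lists $i_*i^*K^\bullet\to K^\bullet$ and $K^\bullet\to i_*i^!K^\bullet$, which are the reverses of the natural maps. If one insists on those directions, I will take them to be the inverses of the isomorphisms just constructed, so nothing further needs to be proved. Apart from this, the argument is entirely formal and uses only the recollement triangles and the full faithfulness of $i_*$.
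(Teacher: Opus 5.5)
Your proof is correct and is essentially the paper's argument. The paper simply asserts that a complex vanishing off $\{0\}$ lies in the essential image of the fully faithful functor $i_*$, and then uses $i^*i_*\cong\mathrm{id}$ and $i^!i_*\cong\mathrm{id}$. You instead justify that step explicitly through the two recollement triangles, whose outer terms $j_!j^*K^\bullet$ and $Rj_*j^*K^\bullet$ vanish; this is the standard proof of the essential-image claim. You are also right that the natural maps are the unit $K^\bullet\to i_*i^*K^\bullet$ and the counit $i_*i^!K^\bullet\to K^\bullet$. The arrows as written in the statement must therefore be read as their inverses, a point the paper's proof passes over.
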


\begin{proof}
Since $K^\bullet$ vanishes on $X\setminus\{0\}$, it lies in the essential image of the fully
faithful functor
\[
i_*:D^b_c(\{0\},\mathbb Z)\longrightarrow D^b_c(X,\mathbb Z).
\]
Hence $K^\bullet\cong i_*A^\bullet$ for the bounded complex $A^\bullet:=i^*K^\bullet$, which
gives the first isomorphism. The second is proved similarly using $i^!i_*\cong \mathrm{id}$.
\end{proof}

\begin{lemma}\label{lem:stalks-of-K}
Let
\[
K:=Rj_*\mathbb Z_U[2],
\qquad U:=X\setminus\{0\}.
\]
Then there are canonical isomorphisms
\[
H^m(i_0^*K)\cong H^{m+2}(L,\mathbb Z)
\qquad \text{for all }m\in\mathbb Z.
\]
In particular,
\[
H^{-2}(i_0^*K)\cong \mathbb Z,\qquad
H^{-1}(i_0^*K)\cong H^1(L,\mathbb Z),\qquad
H^0(i_0^*K)\cong H^2(L,\mathbb Z),\qquad
H^1(i_0^*K)\cong \mathbb Z,
\]
and $H^m(i_0^*K)=0$ for $m\notin\{-2,-1,0,1\}$.
\end{lemma}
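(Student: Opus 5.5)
The plan is to compute the stalk of $K$ at $0$ via the standard conic-neighborhood description of $Rj_*$. By the local conic structure theorem for complex analytic germs, we may shrink $X$ so that $0$ has a fundamental system of neighborhoods $B_\varepsilon\cap X$, each homeomorphic to the open cone on the link $L=S_\varepsilon\cap X$. The punctured neighborhoods $(B_\varepsilon\cap X)\setminus\{0\}$ are homeomorphic to $L\times(0,\varepsilon)$. For $K=Rj_*\mathbb Z_U[2]$, the stalk at $0$ is the colimit over these neighborhoods:
\[
H^m(i_0^*K)\cong \varinjlim_{\varepsilon}H^{m}\bigl((B_\varepsilon\cap X)\setminus\{0\}, \mathbb Z[2]\bigr)=\varinjlim_\varepsilon H^{m+2}\bigl(L\times(0,\varepsilon),\mathbb Z\bigr).
\]
By homotopy invariance each term is $H^{m+2}(L,\mathbb Z)$. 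The transition maps are restrictions along inclusions that are homotopy equivalences compatible with the product structure, so they are isomorphisms and the colimit stabilizes. This yields the canonical isomorphism $H^m(i_0^*K)\cong H^{m+2}(L,\mathbb Z)$.

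For the listed values, use that $L$ is a closed, oriented, connected real $3$-manifold.
\begin{itemize}
\item Since $X$ is normal of dimension $2$, $0$ is an isolated singular point, so $L$ is a smooth compact $3$-manifold. It is oriented by the complex structure on $U$.
\item Connectedness of $L$ follows from normality: $X$ is locally irreducible at $0$, hence $U\cap B_\varepsilon$ is connected. Equivalently, via Zariski's main theorem, the exceptional set of a resolution is connected.
\end{itemize}
This gives $H^0(L)\cong\mathbb Z$, hence $H^{-2}(i_0^*K)\cong\mathbb Z$. Poincaré duality gives $H^3(L)\cong H_0(L)\cong\mathbb Z$, hence $H^1(i_0^*K)\cong\mathbb Z$. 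The middle two identifications are just the general formula with $m=-1,0$. Finally $H^k(L)=0$ for $k<0$ and for $k>3=\dim_{\mathbb R}L$, so $H^m(i_0^*K)=0$ for $m\notin\{-2,-1,0,1\}$.

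The only delicate step is justifying the colimit/conic-structure identification canonically. I would cite the conic structure theorem (Milnor, Burghelea–Verona) together with the standard fact that for a constructible complex the stalk cohomology at $0$ equals the hypercohomology over any sufficiently small ball $B_\varepsilon\cap X$. Since $(Rj_*\mathbb Z_U)|_{B_\varepsilon\cap X}$ has hypercohomology equal to $H^*(B_\varepsilon\cap U,\mathbb Z)$, this avoids any explicit colimit argument. Connectedness from normality is the other point to state carefully, but it is standard.
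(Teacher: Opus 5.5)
Your proposal is correct and follows essentially the same route as the paper. Both arguments identify the stalk of $R^{m+2}j_*\mathbb Z_U$ at $0$ with the cohomology of a small punctured neighborhood, replace that neighborhood by $L$ via the conic structure, and read off the list from $L$ being a closed, connected, oriented $3$-manifold; your added details (stabilization of the colimit, connectedness from local irreducibility, and $H^3(L,\mathbb Z)\cong\mathbb Z$ by Poincar\'e duality) fill in points the paper leaves implicit.
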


\begin{proof}
By definition of derived pushforward,
\[
H^m(i_0^*K)
=
H^m(i_0^*Rj_*\mathbb Z_U[2])
\cong
(R^{m+2}j_*\mathbb Z_U)_0.
\]
The stalk of $R^qj_*\mathbb Z_U$ at $0$ is the cohomology of a sufficiently small punctured
neighborhood of $0$ in $U$, hence of $X_\varepsilon\setminus\{0\}$. Since
\[
X_\varepsilon\setminus\{0\}\simeq L,
\]
one obtains
\[
(R^{m+2}j_*\mathbb Z_U)_0\cong H^{m+2}(L,\mathbb Z).
\]
This proves the general formula. The explicit list follows because $L$ is a connected
oriented closed real $3$-manifold.
\end{proof}

The next lemma records the point-stratum stalk and costalk bounds coming from the
semi-perversity conditions of Remark~6.3 of Jung--Saito \cite{JungSaitoFactoriality}.

\begin{lemma}\label{lem:point-stratum-bounds}
Let $(X,0)$ be a germ of a normal complex analytic surface, and let $i=i_0:\{0\}\hookrightarrow X$.
Set
\[
A:={}^{p}IC_X\mathbb Z,
\qquad
B:={}^{p}_{+}IC_X\mathbb Z.
\]
Then:
\begin{enumerate}
\item[(i)] Since $A\in {}^p\!\operatorname{Perv}(X,\mathbb Z)$, one has
\[
H^k(i^*A)=0 \ \text{for } k>0,
\qquad
H^k(i^!A)=0 \ \text{for } k<0.
\]
\item[(ii)] Since $B\in {}^p_+\!\operatorname{Perv}(X,\mathbb Z)$, one has
\[
H^k(i^*B)=0 \ \text{for } k>1,
\qquad
H^1(i^*B)\ \text{torsion},
\]
and
\[
H^k(i^!B)=0 \ \text{for } k<0,
\qquad
H^0(i^!B)\ \text{torsion-free}.
\]
\end{enumerate}
\end{lemma}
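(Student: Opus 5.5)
The plan is to specialize the stratumwise semi-perversity conditions recalled from \cite[Remark~6.3]{JungSaitoFactoriality} to the point stratum $S=\{0\}$, with $d_S=0$ and $j=0$. First I will check that both $A$ and $B$ lie in the relevant hearts. By construction, $A={}^pj_{!*}\mathbb Z_{X_{\mathrm{reg}}}[2]$ is an object of ${}^p\!\operatorname{Perv}(X,\mathbb Z)$, since intermediate extension takes values in the heart. Likewise $B={}^p_+j_{!*}\mathbb Z_{X_{\mathrm{reg}}}[2]$ lies in ${}^p_+\!\operatorname{Perv}(X,\mathbb Z)$. Hence $A\in{}^pD^{\le0}\cap{}^pD^{\ge0}$ and $B\in{}^p_+D^{\le0}\cap{}^p_+D^{\ge0}$.

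For (i), the condition $A\in{}^pD^{\le0}$ applied to $S=\{0\}$ says $H^k(i_S^*A)=0$ for $k>0-0=0$. The condition $A\in{}^pD^{\ge0}$ gives $H^k(i_S^!A)=0$ for $k<0$. Since $i_S=i$ in this case, these are exactly the two vanishing statements claimed. Equivalently, one can read them off from the BBD pointwise definition with $\dim_{\mathbb C}\overline{\{0\}}=0$.

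For (ii), the dual condition $B\in{}^p_+D^{\le0}$ at $S=\{0\}$ gives $H^k(i^*B)=0$ for $k>0-0+1=1$, together with $H^1(i^*B)$ being torsion. The condition $B\in{}^p_+D^{\ge0}$ gives $H^k(i^!B)=0$ for $k<0$, together with $H^0(i^!B)$ being torsion-free.

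The only point needing care is that the Jung--Saito conditions are stated for strata of a Whitney stratification. So I must say that $\{0\}\sqcup X_{\mathrm{reg}}$ (after shrinking $X$) is such a stratification, and that $A$ and $B$ are constructible with respect to it. This holds because both complexes restrict to the constant sheaf shifted by $2$ on $X\setminus\{0\}$. With that in place there is no real obstacle: the lemma is a direct transcription of the defining conditions at the zero-dimensional stratum.
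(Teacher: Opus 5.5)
Your proposal is correct and follows the paper's proof exactly: both simply specialize the Jung--Saito stratumwise ordinary and dual semi-perversity conditions to the point stratum $S=\{0\}$, with $d_S=0$ and $j=0$. Your extra remarks are correct bookkeeping that the paper leaves implicit, namely that after shrinking, $\{0\}\sqcup X_{\mathrm{reg}}$ is a Whitney stratification adapted to $A$ and $B$, and that intermediate extensions land in the respective hearts.
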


\begin{proof}
Apply the point-stratum case of the semi-perversity conditions recalled above. Since the
point stratum has complex dimension $d_S=0$, the ordinary perversity conditions give
\[
A\in {}^pD^{\le 0}\Rightarrow H^k(i^*A)=0 \ \text{for } k>0,
\qquad
A\in {}^pD^{\ge 0}\Rightarrow H^k(i^!A)=0 \ \text{for } k<0.
\]
Likewise, the dual semi-perversity conditions of
\cite[Remark~6.3, (6.10)]{JungSaitoFactoriality}
give
\[
B\in {}^p_+D^{\le 0}\Rightarrow
\begin{cases}
H^k(i^*B)=0 & \text{for } k>1,\\
H^1(i^*B)\ \text{torsion},
\end{cases}
\]
and
\[
B\in {}^p_+D^{\ge 0}\Rightarrow
\begin{cases}
H^k(i^!B)=0 & \text{for } k<0,\\
H^0(i^!B)\ \text{torsion-free}.
\end{cases}
\]
This is exactly the claimed statement.
\end{proof}

We now analyze the cone
\[
C^\bullet:=\operatorname{Cone}(A\to B).
\]

\begin{proposition}\label{prop:cone-range-and-boundary}
Let $(X,0)$ be a germ of a normal complex analytic surface, and set
\[
A:={}^{p}IC_X\mathbb Z,
\qquad
B:={}^{p}_{+}IC_X\mathbb Z,
\qquad
C^\bullet:=\operatorname{Cone}(A\to B).
\]
Then:
\begin{enumerate}
\item[(i)] $C^\bullet$ is supported at $\{0\}$;
\item[(ii)] $H^k(i^*C^\bullet)=0$ for $k>1$;
\item[(iii)] $H^k(i^!C^\bullet)=0$ for $k<-1$;
\item[(iv)] using Lemma~\ref{lem:point-supported-star-shriek}, one may identify
$H^k(i^*C^\bullet)$ and $H^k(i^!C^\bullet)$, so the only possible nonzero cohomology of
$C^\bullet$ is concentrated in degrees $-1,0,1$;
\item[(v)] $H^1(i^*C^\bullet)$ is torsion;
\item[(vi)] there is a natural exact sequence
\[
0\longrightarrow H^{-1}(i^*C^\bullet)\longrightarrow H^0(i^*A)\longrightarrow H^0(i^*B)
\longrightarrow H^0(i^*C^\bullet)\longrightarrow 0;
\]
\item[(vii)] there is a natural exact sequence
\[
0\longrightarrow H^{-1}(i^!C^\bullet)\longrightarrow H^0(i^!A)\longrightarrow H^0(i^!B)
\longrightarrow H^0(i^!C^\bullet)\longrightarrow H^1(i^!A).
\]
\end{enumerate}
\end{proposition}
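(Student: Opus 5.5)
The proof is a direct diagram chase in the two long exact sequences obtained by applying $i^*$ and $i^!$ to the triangle
\[
A\longrightarrow B\longrightarrow C^\bullet\longrightarrow A[1].
\]
Part (i) is Lemma~\ref{lem:cone-supported-at-zero}. For the rest, we feed in the vanishing ranges of Lemma~\ref{lem:point-stratum-bounds} and use Lemma~\ref{lem:point-supported-star-shriek} to identify $i^*C^\bullet\cong i^!C^\bullet$.

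For (ii), take the stalk sequence
\[
\cdots\to H^k(i^*B)\to H^k(i^*C^\bullet)\to H^{k+1}(i^*A)\to\cdots.
\]
For $k>1$ both outer terms vanish: $H^k(i^*B)=0$ for $k>1$, and $H^{k+1}(i^*A)=0$ since $k+1>0$. Hence $H^k(i^*C^\bullet)=0$.

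For (iii), take the costalk sequence
\[
\cdots\to H^{k-1}(i^!B)\to H^k(i^!C^\bullet)\to H^{k+1}(i^!A)\to\cdots
\]
from the rotated triangle. For $k<-1$ we have $k-1<0$ and $k+1<0$, so both outer terms vanish and $H^k(i^!C^\bullet)=0$.

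Part (iv) follows by combining (ii) and (iii) through $H^k(i^*C^\bullet)\cong H^k(i^!C^\bullet)$. The only degrees surviving both bounds are $-1,0,1$.

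For (v), use the segment
\[
H^1(i^*B)\to H^1(i^*C^\bullet)\to H^2(i^*A)=0.
\]
Thus $H^1(i^*C^\bullet)$ is a quotient of the torsion group $H^1(i^*B)$. It is also finitely generated by constructibility, so it is torsion.

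For (vi), write out the stalk sequence from degree $-1$ to degree $1$:
\[
H^{-1}(i^*B)\to H^{-1}(i^*C^\bullet)\to H^0(i^*A)\to H^0(i^*B)\to H^0(i^*C^\bullet)\to H^1(i^*A)=0.
\]
The right end gives surjectivity onto $H^0(i^*C^\bullet)$. For injectivity on the left, the map $H^{-1}(i^*B)\to H^{-1}(i^*C^\bullet)$ must be zero. This is the one real point. I plan to argue it through the costalk side. Since $H^{-1}(i^*C^\bullet)\cong H^{-1}(i^!C^\bullet)$, the costalk sequence
\[
0=H^{-1}(i^!B)\to H^{-1}(i^!C^\bullet)\to H^0(i^!A)
\]
shows that $H^{-1}(C^\bullet)$ injects into $H^0(i^!A)$. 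That identification alone does not give stalk injectivity. I therefore plan to instead use the adjunction description of the connecting map. Because $C^\bullet\cong i_*i^*C^\bullet$, the map $i^*B\to i^*C^\bullet$ corresponds by adjunction to $B\to i_*i^!C^\bullet$, and hence factors as
\[
i^*B\longrightarrow i^*i_*i^!B\longrightarrow\cdots.
\]
The goal is to show that in degree $-1$ it factors through $H^{-1}(i^!B)=0$. I expect this compatibility between stalk and costalk connecting maps to be the main obstacle. If the adjunction route fails, the fallback is to check it against the local model $K=Rj_*\mathbb Z_U[2]$ of Lemma~\ref{lem:stalks-of-K}, using $H^{-1}$ of both extensions equal to $H^1(L)$.

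Part (vii) is then the costalk sequence
\[
H^{-1}(i^!B)=0\to H^{-1}(i^!C^\bullet)\to H^0(i^!A)\to H^0(i^!B)\to H^0(i^!C^\bullet)\to H^1(i^!A),
\]
with the left zero supplied by Lemma~\ref{lem:point-stratum-bounds}(ii).
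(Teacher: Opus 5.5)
Your arguments for (i)--(v) and (vii) agree with the paper's. In (iii) the exact segment is $H^k(i^!B)\to H^k(i^!C^\bullet)\to H^{k+1}(i^!A)$, not $H^{k-1}(i^!B)\to\cdots$, but for $k<-1$ the outer terms vanish either way. The gap is in (vi). You correctly reduce left exactness to the vanishing of $H^{-1}(i^*B)\to H^{-1}(i^*C^\bullet)$, i.e.\ to surjectivity of $H^{-1}(i^*A)\to H^{-1}(i^*B)$. That is exactly what the paper asserts, as an isomorphism. However, you never prove it, and the adjunction factorization you sketch does not exist. A morphism $B\to i_*i^*C^\bullet$ corresponds to $i^*B\to i^*C^\bullet$ and factors through the unit $B\to i_*i^*B$. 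The only canonical comparison map between the costalk and the stalk is $i^!B\to i^*B$, so nothing routes $H^{-1}(i^*B)\to H^{-1}(i^*C^\bullet)$ through $H^{-1}(i^!B)=0$.

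In fact no argument using only Lemma~\ref{lem:point-stratum-bounds}, support at $0$, and $i^*C^\bullet\cong i^!C^\bullet$ can work. The paper's stated reason, that $A$ and $B$ agree off $0$, is not enough either. Suppose $b_1(L)>0$ and let $N\subset H^1(L,\mathbb Z)$ be a proper finite-index subgroup. Glue an extension $A'$ of $\mathbb Z_U[2]$ with $i^*A'=\mathbb Z[2]\oplus N[1]$, mapping to $i^*Rj_*\mathbb Z_U[2]$ by an isomorphism on $H^{-2}$ and by the inclusion $N\subset H^1(L,\mathbb Z)$ on $H^{-1}$.
\begin{itemize}
\item Then $i^!A'\cong\operatorname{Cone}(i^*A'\to i^*Rj_*\mathbb Z_U[2])[-1]$ has $H^{<0}=0$ and $H^0\cong H^1(L,\mathbb Z)/N$, so $A'$ is perverse.
\item Since $i^*A'\in D^{\le -1}$ and $i^!B\in D^{\ge 1}$, the identity of $\mathbb Z_U[2]$ extends uniquely to a map $A'\to B$.
\item The induced map $H^{-1}(i^*A')=N\to H^{-1}(i^*B)$ is not onto, so (vi) fails for the pair $(A',B)$.
\end{itemize}

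The missing input is that $A$ and $B$ are \emph{intermediate extensions}, not merely perverse. This gives the strict point conditions $H^k(i^*A)=0$ for $k\ge 0$, $H^k(i^!A)=0$ for $k\le 0$, and $H^k(i^!B)=0$ for $k\le 0$. With $H^0(i^!A)=0$, your own costalk observation closes the argument: the sequence $0=H^{-1}(i^!B)\to H^{-1}(i^!C^\bullet)\to H^0(i^!A)=0$ forces $H^{-1}(i^*C^\bullet)\cong H^{-1}(i^!C^\bullet)=0$, so (vi) is trivially exact on the left. Equivalently, the triangles $i^!(-)\to i^*(-)\to i^*Rj_*j^*(-)$ show that $H^{-1}(i^*A)$ and $H^{-1}(i^*B)$ both map isomorphically to $H^{-1}(i^*K)\cong H^1(L,\mathbb Z)$, compatibly with $u$. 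Your fallback points this way, but knowing that both groups are abstractly $H^1(L,\mathbb Z)$ is not enough. The subgroup $N$ above is also free of the same rank, yet its inclusion is not surjective.
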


\begin{proof}
Assertion (i) is Lemma~\ref{lem:cone-supported-at-zero}.

Apply $i^*$ to the distinguished triangle
\[
A\longrightarrow B\longrightarrow C^\bullet\overset{+1}{\longrightarrow}.
\]
The resulting long exact sequence contains
\[
H^k(i^*A)\longrightarrow H^k(i^*B)\longrightarrow H^k(i^*C^\bullet)
\longrightarrow H^{k+1}(i^*A).
\]
By Lemma~\ref{lem:point-stratum-bounds}, $H^k(i^*A)=0$ for $k>0$ and $H^k(i^*B)=0$ for
$k>1$. Hence for $k>1$,
\[
0\longrightarrow 0\longrightarrow H^k(i^*C^\bullet)\longrightarrow 0,
\]
so
\[
H^k(i^*C^\bullet)=0 \qquad (k>1).
\]
This proves (ii).

Similarly, apply $i^!$ to the same triangle. The resulting long exact sequence contains
\[
H^k(i^!A)\longrightarrow H^k(i^!B)\longrightarrow H^k(i^!C^\bullet)
\longrightarrow H^{k+1}(i^!A).
\]
By Lemma~\ref{lem:point-stratum-bounds}, $H^k(i^!A)=0$ for $k<0$ and $H^k(i^!B)=0$ for
$k<0$. Hence for $k<-1$,
\[
0\longrightarrow 0\longrightarrow H^k(i^!C^\bullet)\longrightarrow 0,
\]
so
\[
H^k(i^!C^\bullet)=0 \qquad (k<-1).
\]
This proves (iii).

Since $C^\bullet$ is supported at $\{0\}$, Lemma~\ref{lem:point-supported-star-shriek}
gives
\[
i^*C^\bullet\cong i^!C^\bullet,
\]
hence (ii) and (iii) together imply that the only possible nonzero cohomology degrees are
$-1,0,1$. This proves (iv).

Taking $k=1$ in the $i^*$ long exact sequence gives
\[
H^1(i^*A)\longrightarrow H^1(i^*B)\longrightarrow H^1(i^*C^\bullet)\longrightarrow
H^2(i^*A).
\]
By Lemma~\ref{lem:point-stratum-bounds},
\[
H^1(i^*A)=0,\qquad H^2(i^*A)=0,\qquad H^1(i^*B)\ \text{torsion}.
\]
Hence
\[
H^1(i^*C^\bullet)\cong H^1(i^*B),
\]
and therefore $H^1(i^*C^\bullet)$ is torsion. This proves (v).

For (vi), continue the same $i^*$ long exact sequence in degrees $-1$ and $0$:
\[
H^{-1}(i^*A)\to H^{-1}(i^*B)\to H^{-1}(i^*C^\bullet)\to
H^0(i^*A)\to H^0(i^*B)\to H^0(i^*C^\bullet)\to H^1(i^*A).
\]
Since $A$ and $B$ agree on the smooth locus and differ only by a point-supported correction,
the map $H^{-1}(i^*A)\to H^{-1}(i^*B)$ is an isomorphism. Moreover,
$H^1(i^*A)=0$ by Lemma~\ref{lem:point-stratum-bounds}. Therefore the displayed sequence
reduces to
\[
0\longrightarrow H^{-1}(i^*C^\bullet)\longrightarrow H^0(i^*A)\longrightarrow H^0(i^*B)
\longrightarrow H^0(i^*C^\bullet)\longrightarrow 0.
\]

For (vii), the $i^!$ long exact sequence in degrees $-1$ and $0$ reads
\[
H^{-1}(i^!A)\to H^{-1}(i^!B)\to H^{-1}(i^!C^\bullet)\to
H^0(i^!A)\to H^0(i^!B)\to H^0(i^!C^\bullet)\to H^1(i^!A).
\]
Since $H^{-1}(i^!A)=H^{-1}(i^!B)=0$ by Lemma~\ref{lem:point-stratum-bounds}, this becomes
\[
0\longrightarrow H^{-1}(i^!C^\bullet)\longrightarrow H^0(i^!A)\longrightarrow H^0(i^!B)
\longrightarrow H^0(i^!C^\bullet)\longrightarrow H^1(i^!A).
\]
This proves (vii).
\end{proof}

\begin{remark}\label{rem:sec2-remaining-gap}
Proposition~\ref{prop:cone-range-and-boundary} is the strongest conclusion obtainable from
the formal support argument, the local model $K=Rj_*\mathbb Z_U[2]$, and the point-stratum
semi-perversity bounds alone. It shows that the discrepancy cone is point-supported,
confined to the three critical degrees $-1,0,1$, and has degree-$1$ cohomology torsion.
The remaining codimension-two issue is the final single-degree concentration.
\end{remark}

We now reformulate the rational codimension-two local problem in the torsion-sensitive
Deligne-sheaf framework of Friedman. We use Friedman's torsion-tipped truncation
construction and ts-Deligne sheaves \cite[Definition~4.4]{FriedmanGenIH}, their axiomatic
characterization \cite[Theorem~4.8]{FriedmanGenIH}, the local cone formula on a conical
neighborhood \cite[Lemma~4.2]{FriedmanTsInv}, and the duality theorem
\cite[Theorem~4.19]{FriedmanGenIH}. In the two-stratum situation considered here, these
results provide a published framework for the rational codimension-two comparison.

\begin{lemma}\label{lem:rational-link-h1-torsion}
Let $(X,0)$ be a rational normal complex surface singularity with link $L$. Then
\[
H^1(L,\mathbb Z)
\]
is finite torsion. Consequently,
\[
H^2(L,\mathbb Z)=H^2(L,\mathbb Z)_{\tors}.
\]
\end{lemma}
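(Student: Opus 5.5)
We prove that $H^1(L,\mathbb Z)$ is finite by passing to a resolution, and then use Poincaré duality together with the universal coefficient theorem to conclude that $H^2(L,\mathbb Z)$ is torsion.

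Let $\pi:\widetilde X\to X$ be a good resolution of $(X,0)$ with exceptional divisor $D=\bigcup_i D_i$. Shrinking $X$ to a conic neighborhood, $\widetilde X$ deformation retracts onto $D$, and the link $L$ is identified with $\partial\widetilde X$. Rationality means $R^1\pi_*\mathcal O_{\widetilde X}=0$. By the standard consequences of Artin's work, this forces every $D_i$ to be a smooth rational curve and the dual graph $\Gamma$ of $D$ to be a tree. Hence
\[
H^1(\widetilde X,\mathbb Z)\cong H^1(D,\mathbb Z)\cong H^1(\Gamma,\mathbb Z)\oplus\bigoplus_i H^1(D_i,\mathbb Z)=0,
\]
and $H^2(\widetilde X,\mathbb Z)\cong\mathbb Z^{\#\{D_i\}}$ is free.

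Next we use the long exact sequence of the pair $(\widetilde X,L)$:
\[
H^1(\widetilde X)\to H^1(L)\to H^2(\widetilde X,L)\to H^2(\widetilde X)\to H^2(L).
\]
By Lefschetz duality, $H^2(\widetilde X,L)\cong H_2(\widetilde X)\cong H_2(D)=\Lambda$, and $H^2(\widetilde X)\cong\Lambda^\vee$. Under these identifications the map $\Lambda\to\Lambda^\vee$ is given by the intersection matrix $M$. By Mumford/Grauert negativity, $M$ is negative definite, so this map is injective. Since $H^1(\widetilde X)=0$, it follows that $H^1(L,\mathbb Z)\hookrightarrow\ker(M)=0$. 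Thus $H^1(L,\mathbb Z)=0$, and in particular it is finite torsion.

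For the consequence, $L$ is a closed oriented $3$-manifold. Poincaré duality gives $H^2(L)\cong H_1(L)$, and the universal coefficient theorem gives $H^1(L)\cong\Hom(H_1(L),\mathbb Z)$. Since $H^1(L)$ is torsion and also torsion-free (it is a $\Hom$ into $\mathbb Z$), it vanishes, so $\Hom(H_1(L),\mathbb Z)=0$. As $H_1(L)$ is finitely generated, this means $H_1(L)$ is finite. Therefore $H^2(L,\mathbb Z)\cong H_1(L)$ is torsion, that is, $H^2(L,\mathbb Z)=H^2(L,\mathbb Z)_{\tors}$.

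The main obstacle is the geometric input that a rational singularity has a rational tree as its exceptional configuration. We will cite Artin (or Brieskorn) for this rather than reprove it. The rest is formal once negative definiteness of $M$ is invoked.
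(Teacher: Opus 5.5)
Your proof is correct, and it differs from the paper's in how it handles the key step. The paper's proof simply asserts, as a known property of rational singularities, that $b_1(L)=0$. It then carries out only the formal part: universal coefficients in degree $2$, together with Poincar\'e duality $H_2(L,\mathbb Z)\cong H^1(L,\mathbb Z)$, kill $\Hom(H_2(L,\mathbb Z),\mathbb Z)$ and leave $H^2(L,\mathbb Z)\cong\Ext^1(H_1(L,\mathbb Z),\mathbb Z)$. You instead derive the vanishing from the resolution. Artin's description of the exceptional configuration gives $H^1(\widetilde X,\mathbb Z)\cong H^1(D,\mathbb Z)=0$. The sequence of the pair $(\widetilde X,L)$ plus Mumford's negative definiteness then identifies $H^1(L,\mathbb Z)$ with $\ker(M)=0$. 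Your formal step (duality $H^2\cong H_1$ plus universal coefficients in degree $1$) is equivalent to the paper's.

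Your route buys two things. First, it makes explicit that ``finite torsion'' in the statement really means $H^1(L,\mathbb Z)=0$, since $H^1(L,\mathbb Z)=\Hom(H_1(L,\mathbb Z),\mathbb Z)$ is torsion-free. The paper never says this, and its later remark that $H^1(\mathbb{RP}^3,\mathbb Z)\cong\mathbb Z/2\mathbb Z$ for $A_1$ is mistaken; that group is $H^2$. Second, $H^1(\widetilde X,L)\cong H_3(\widetilde X)=0$, so the same pair sequence gives $H^1(L,\mathbb Z)\cong H^1(D,\mathbb Z)$ for an arbitrary normal surface singularity. Hence the only input your argument actually needs is that $D$ is a tree of rational curves. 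That condition is strictly weaker than rationality: it holds for the non-rational $x^2+y^3+z^7$. The paper's version is shorter but rests entirely on the cited fact $b_1(L)=0$.
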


\begin{proof}
For a rational normal surface singularity, the link \(L\) is a rational homology sphere in
degree \(1\), equivalently \(b_1(L)=0\). Thus \(H^1(L,\mathbb Z)\) has no free part, so it is
finite torsion. Since \(L\) is a connected oriented closed real \(3\)-manifold, the universal
coefficient theorem gives
\[
H^2(L,\mathbb Z)\cong \Hom(H_2(L,\mathbb Z),\mathbb Z)\oplus
\Ext^1(H_1(L,\mathbb Z),\mathbb Z).
\]
By Poincar\'e duality, \(H_2(L,\mathbb Z)\cong H^1(L,\mathbb Z)\), hence \(H_2(L,\mathbb Z)\)
is also finite torsion and therefore
\[
\Hom(H_2(L,\mathbb Z),\mathbb Z)=0.
\]
It follows that
\[
H^2(L,\mathbb Z)\cong \Ext^1(H_1(L,\mathbb Z),\mathbb Z),
\]
so \(H^2(L,\mathbb Z)\) is finite torsion as well. Therefore
\[
H^2(L,\mathbb Z)=H^2(L,\mathbb Z)_{\tors}.
\]
\end{proof}

\begin{lemma}\label{lem:bdd-friedman-identification}
In the codimension-two point-stratum situation considered here, the ts-Deligne sheaf
associated to the torsion-sensitive perversity
\[
\vec p(S)=(1,\emptyset)
\]
coincides with the ordinary middle-perversity intermediate extension
\[
{}^{p}j_{!*}\mathbb Z_U[2],
\]
while the ts-Deligne sheaf associated to the complementary perversity
\[
D\vec p(S)=(1,P(R))
\]
coincides with the dual middle-perversity intermediate extension
\[
{}^{p}_{+}j_{!*}\mathbb Z_U[2].
\]
\end{lemma}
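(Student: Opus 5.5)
The plan is to compare the two objects through their characterizing axioms rather than through explicit constructions. On the BBD side, ${}^pj_{!*}\mathbb Z_U[2]$ is the unique extension $F$ of $\mathbb Z_U[2]$ to $X=U\sqcup\{0\}$ with
\[
H^k(i^*F)=0\ (k>-1),\qquad H^k(i^!F)=0\ (k<1).
\]
Equivalently, $F\cong\tau_{\le -1}Rj_*\mathbb Z_U[2]$. Similarly, ${}^p_+j_{!*}\mathbb Z_U[2]$ is characterized by the dual-perversity analogues read off from the conditions of Jung--Saito recalled above, with $d_S=0$ and $j=\mp1$ for the strict semi-perversities. These are: $H^k(i^*G)=0$ for $k>0$ with $H^0(i^*G)$ torsion, and $H^k(i^!G)=0$ for $k<1$ with $H^1(i^!G)$ torsion-free.

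On Friedman's side, I will invoke \cite[Theorem~4.8]{FriedmanGenIH}. For a ts-perversity $\vec p(S)=(p(S),\mathcal T)$ on a single point stratum of real codimension $4$, it characterizes the ts-Deligne sheaf by three conditions: the restriction to $U$ is $\mathbb Z_U[2]$; the stalk cohomology in degrees above $p(S)-2$ vanishes, except that in the single critical degree it must be $\mathcal T$-torsion; and a dual attaching condition on $i^!$ holds. I will also use the local cone formula \cite[Lemma~4.2]{FriedmanTsInv} and Lemma~\ref{lem:stalks-of-K}. Together these compute the stalk at $0$ as the torsion-tipped truncation of $H^{*+2}(L,\mathbb Z)$.

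Step one is to translate indexing: Friedman's degree convention (sheaves concentrated in nonnegative degrees, with shift by the real dimension) must be converted to the $[2]$-shifted BBD convention. After this, $p(S)=1$ becomes the cutoff "keep degrees $\le -1$" for the stalk. With $\mathcal T=\emptyset$ no torsion tip survives, so the stalk is exactly $\tau_{\le -1}(i^*Rj_*\mathbb Z_U[2])$. This matches the stalk of ${}^pj_{!*}$, and the axioms coincide, so uniqueness gives the first identification. With $\mathcal T=P(R)$, the set of all primes, the torsion-tipped truncation keeps degrees $\le -1$ together with the torsion subgroup $H^2(L,\mathbb Z)_{\tors}$ in degree $0$. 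Step two is to check that this object satisfies the ${}^p_+$ stalk and costalk conditions above. The stalk condition is immediate, since degree $0$ is torsion. For the costalk, I use the triangle $i^!G\to i^*G\to i^*Rj_*j^*G$. Here $H^0(i^!G)=0$ because the torsion part injects, and $H^1(i^!G)\cong H^2(L)/\tors$ is torsion-free. Step three is to conclude uniqueness: an object of ${}^p_+\mathrm{Perv}$ extending $\mathbb Z_U[2]$ with no nonzero subobjects or quotients supported at $0$ is ${}^p_+j_{!*}$, and Friedman's axioms pin down exactly such an object. Alternatively, I can use \cite[Theorem~4.19]{FriedmanGenIH}: the ts-Deligne sheaf for $D\vec p$ is $\mathbb D$ of the one for $\vec p$. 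Combined with the defining relation ${}^p_+D^{\le0}=\mathbb D({}^pD^{\ge0})$, this gives ${}^p_+j_{!*}\cong\mathbb D\,{}^pj_{!*}\mathbb D$, which reduces the second claim to the first.

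The main obstacle is the bookkeeping in step one. Friedman's ts-perversity values, truncation degrees, and duality convention ($D\vec p=(t-p,P(R)\setminus\mathcal T)$ with $t$ the top perversity $=2$ in codimension $4$) must be made to land precisely on the BBD degrees $-1,0,1$, and an off-by-one error there would swap the roles of the two perversities. I would first sanity-check the normalization on an $A_1$ singularity, where $H^2(L)=\mathbb Z/2$. The dual middle extension should acquire exactly the stalk $\mathbb Z/2$ in degree $0$, consistent with $E$.
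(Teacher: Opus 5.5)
Your proposal is correct, but it works at the level of the objects themselves, whereas the paper works at the level of hearts.

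\textbf{The paper's route.} The paper (Propositions~\ref{prop:app-ordinary-heart-match} and~\ref{prop:app-dual-heart-match}) goes through Friedman's Section~5:
it computes the shifted extended perversity $\vec p_E^{\,+}(S)=(2,\emptyset)$;
it invokes Friedman's remark that empty prime-set components give the standard perverse heart;
it applies his Proposition~5.12, which says a ts-Deligne sheaf is the intermediate extension in its ts-perverse heart;
it obtains the dual case from his Corollary~5.20 together with the identity $D(\vec p_E^{\,+})=(D\vec p)_E^{\,+}$.

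\textbf{Your route.} You compute $P_{\vec p}$ and $P_{D\vec p}$ explicitly as torsion-tipped truncations of $Rj_*\mathbb Z_U[2]$. You then check the recollement characterization of $j_{!*}$, namely $i^*P\in D^{\le -1}$ and $i^!P\in D^{\ge 1}$ on the point, for the standard and the $p_+$ $t$-structure respectively. Your bookkeeping is right: $D\vec p(S)=(2-1,P(R))$, and Friedman's unshifted degrees $\le 1$ plus a torsion tip in degree $2$ become BBD degrees $\le -1$ plus torsion in degree $0$. Your costalk computation is also right: $H^k(i^!G)=0$ for $k\le 0$, and $H^1(i^!G)\cong H^2(L,\mathbb Z)/\tors$.

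\textbf{What each buys.} Your argument is more elementary, since it needs only Friedman's definition of the ts-Deligne sheaf and BBD uniqueness. As a by-product it gives $H^0({}^p_+j_{!*}\mathbb Z_U[2])_0\cong H^2(L,\mathbb Z)_{\tors}$ for every normal surface germ, with no rationality hypothesis. The paper otherwise obtains exactly this identification only by appeal to Jung--Saito. The paper's heart-level argument is more structural, and it is the form that would extend to stratifications with several singular strata. However, it relies on more external results and on the identity $D(\vec p_E^{\,+})=(D\vec p)_E^{\,+}$, which it justifies only briefly.

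Your duality alternative for the second claim uses Friedman's Theorem~4.19 together with ${}^p_+j_{!*}\cong\mathbb D\,{}^pj_{!*}\,\mathbb D$ and $\mathbb D(\mathbb Z_U[2])\cong\mathbb Z_U[2]$. It is essentially the object-level counterpart of the paper's Proposition~\ref{prop:app-dual-heart-match}.
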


\begin{proof}
This is proved in Appendix~\ref{app:rational-codim2}; see
Propositions~\ref{prop:app-ordinary-heart-match} and
\ref{prop:app-dual-heart-match}, which identify the relevant Friedman
torsion-sensitive perverse hearts with the BBD ordinary and dual integral middle-perversity hearts in the isolated codimension-two point-stratum case.
\end{proof}

\begin{remark}\label{rem:ts-vs-bbd-comparison}
The identification of \(A\) and \(B\) with the ts-Deligne sheaves \(P_{\vec p}\) and
\(P_{D\vec p}\) is justified by comparing the defining point-stratum conditions. For the
choice \(\vec p(S)=(1,\emptyset)\), Friedman's torsion-sensitive condition retains no torsion
at the critical degree, matching the ordinary middle-perversity behavior. For the
complementary choice \(D\vec p(S)=(1,P(R))\), all torsion is retained at the critical degree,
matching the dual middle-perversity behavior. More precisely, this comparison is the
codimension-two specialization of the relation between the BBD ordinary/dual integral
middle-perversity conditions and Friedman's torsion-sensitive perverse conditions; cf.\
\cite[Complement~3.3]{BBD82} and \cite[Section~5.1, Proposition~5.12]{FriedmanGenIH}.
\end{remark}

\begin{proposition}[Rational ts-Deligne bridge]\label{prop:rational-ts-bridge}
Let $(X,0)$ be a rational normal complex surface germ with unique singular point \(0\), and let
\[
j:U:=X\setminus\{0\}\hookrightarrow X.
\]
Let \(S=\{0\}\) denote the singular stratum. Consider the torsion-sensitive perversity
\[
\vec p(S)=(1,\emptyset)
\]
and its complementary dual perversity
\[
D\vec p(S)=(1,P(R)).
\]
Let
\[
A:={}^{p}j_{!*}\mathbb Z_U[2],\qquad
B:={}^{p}_{+}j_{!*}\mathbb Z_U[2].
\]
By Lemma~\ref{lem:bdd-friedman-identification}, these coincide with the ts-Deligne sheaves
\(P_{\vec p}\) and \(P_{D\vec p}\), respectively. Then the natural morphism
\[
u:A\longrightarrow B
\]
has cone supported at \(0\), and
\[
\operatorname{Cone}(u)\cong i_*E,
\qquad
E\cong H^2(L,\mathbb Z)=H^2(L,\mathbb Z)_{\tors}.
\]
In particular, by Section~4,
\[
E\cong \Lambda^\vee/\Lambda.
\]
\end{proposition}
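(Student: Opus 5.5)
The plan is to compute both $A$ and $B$ explicitly as truncations of the local model $K=Rj_*\mathbb Z_U[2]$, and then read off the cone directly. By Lemma~\ref{lem:bdd-friedman-identification}, $A\cong P_{\vec p}$ and $B\cong P_{D\vec p}$. By Friedman's construction \cite[Definition~4.4, Theorem~4.8]{FriedmanGenIH}, each ts-Deligne sheaf is obtained from $\mathbb Z_U[2]$ by a single torsion-tipped truncation of $K$ along the unique singular stratum $S=\{0\}$.

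First I compute the two truncations. For $\vec p(S)=(1,\emptyset)$, the truncation keeps the stalk cohomology of $K$ in degrees $\le -1$ and discards all of degree $0$. This gives
\[
A\cong \tau_{\le -1}K.
\]
One can cross-check this against Lemma~\ref{lem:point-stratum-bounds}(i) and the usual Deligne formula. For $D\vec p(S)=(1,P(R))$, the truncation keeps degrees $\le -1$ together with the torsion part of degree $0$. So $B$ has stalk cohomology $H^k(i^*K)$ for $k\le -1$ and $H^0(i^*K)_{\tors}$ in degree $0$. By Lemma~\ref{lem:stalks-of-K}, $H^0(i^*K)\cong H^2(L,\mathbb Z)$. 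By Lemma~\ref{lem:rational-link-h1-torsion}, in the rational case this group is entirely torsion. Hence the torsion tip is the whole of $H^0$, and
\[
B\cong \tau_{\le 0}K.
\]
I would verify these stalk descriptions with Friedman's local cone formula \cite[Lemma~4.2]{FriedmanTsInv}, which expresses the stalk at $0$ via the truncated cohomology of the link $L$.

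Next I identify $u$ and compute its cone. The canonical map $\tau_{\le -1}K\to\tau_{\le 0}K$ restricts to the identity on $U$. Any morphism $A\to B$ restricting to the identity on $U$ is unique, because two such maps differ by a morphism factoring through a point-supported object. The relevant groups $\Hom(i_*i^*A, B)$ and $\Hom(A,i_*i^!B)$ vanish by the stalk bounds on $A$ and the costalk bounds on $B$ from Lemma~\ref{lem:point-stratum-bounds}. Hence $u$ is the truncation map. Its cone is $\tau_{\ge 0}\tau_{\le 0}K=\mathcal H^0(K)[0]$. This sheaf vanishes on $U$, since $K|_U=\mathbb Z_U[2]$ is concentrated in degree $-2$, so it equals $i_*H^0(i^*K)$. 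Therefore
\[
\operatorname{Cone}(u)\cong i_*H^2(L,\mathbb Z).
\]
This also recovers the support statement of Lemma~\ref{lem:cone-supported-at-zero}.

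Finally I identify $E$. By definition $E=H^0(B)_0=H^0(i^*\tau_{\le0}K)=H^0(i^*K)\cong H^2(L,\mathbb Z)$, and this equals $H^2(L,\mathbb Z)_{\tors}$ by Lemma~\ref{lem:rational-link-h1-torsion}. Comparing with the previous step gives $\operatorname{Cone}(u)\cong i_*E$. The last assertion $E\cong\Lambda^\vee/\Lambda$ is then quoted from Section~4.

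The main obstacle is the first step: making sure Friedman's torsion-tipped truncation, with its indexing conventions, really yields $\tau_{\le-1}K$ and $\tau_{\le -1}K$ plus $H^0_{\tors}$ in degree $0$. This is a shift bookkeeping issue between Friedman's perversity values and BBD degrees. I would settle it by checking the stalk at $0$ against the bounds of Lemma~\ref{lem:point-stratum-bounds}, which pins down the degrees uniquely. A secondary point is the degree in which $E$ sits in the cone. With the above conventions it appears in degree $0$, so I would state $\operatorname{Cone}(u)\cong i_*E$ with $E$ placed in degree $0$, consistent with the exact sequence of Proposition~\ref{prop:cone-range-and-boundary}(vi).
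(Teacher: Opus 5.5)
Your proposal is correct and follows essentially the paper's route: identify $A$ and $B$ with Friedman's ts-Deligne sheaves, apply the torsion-tipped truncation at $0$, and use rationality to see that all of $H^0(i^*K)\cong H^2(L,\mathbb Z)$ is torsion, so the two extensions differ by exactly that group in degree $0$. Your version is more explicit than the paper's: writing $A\cong\tau_{\le -1}K$ and $B\cong\tau_{\le 0}K$, identifying $u$ with the truncation map, and computing the cone as $\mathcal H^0(K)[0]$ justifies the paper's looser claim that the lower-degree contributions cancel. One small correction: the vanishing of $\Hom(i^*A,i^!B)$ needs the sharper fact $H^0(i^*A)=0$ from your first step, since the perverse bounds of Lemma~\ref{lem:point-stratum-bounds} alone do not give it.
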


\begin{proof}
By Lemma~2.8, the objects
\[
A={}^pj_{!*}\mathbb Z_U[2],
\qquad
B={}^p_+j_{!*}\mathbb Z_U[2]
\]
coincide with the corresponding torsion-sensitive Deligne sheaves
\(P_{\vec p}\) and \(P_{D\vec p}\).  The rational codimension-two gluing
theorem is proved in Appendix~\ref{app:rational-codim2}; see
Theorem~\ref{thm:app-rational-codim2-gluing} and
Corollary~\ref{cor:app-rational-gluing}.  These yield
\[
\operatorname{Cone}(u)\cong i_*E,
\qquad
E\cong H^2(L,\mathbb Z)_{\tors}.
\]
The final identification
\[
E\cong \Lambda^\vee/\Lambda
\]
is exactly the comparison theorem of Section~4.
\end{proof}

The preceding proposition gives the rational codimension-two comparison in Friedman's
torsion-sensitive normalization. We now return to the full general normal-surface statement
in the normalization used throughout the paper.

\begin{proposition}\label{prop:self-dual-triangle}
Let $(X,0)$ be a germ of a normal complex analytic surface, and define
\[
E:=H^0({}^p_+IC_X\mathbb Z)_0.
\]
Then there is a distinguished triangle
\[
{}^pIC_X\mathbb Z
\longrightarrow
{}^p_+IC_X\mathbb Z
\longrightarrow
E[1]
\overset{+1}{\longrightarrow},
\]
where $E$ is viewed as a complex supported at $0$ in degree $0$. Moreover, this triangle
is self-dual under Verdier duality.
\end{proposition}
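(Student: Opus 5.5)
The plan is to make both intersection complexes explicit as truncations of the local model $K=Rj_*\mathbb Z_U[2]$ from Lemma~\ref{lem:stalks-of-K}, read off the cone directly, and then handle self-duality by applying $\mathbb D$ to the triangle.

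\emph{Step 1: explicit models for $A$ and $B$.} I will set
\[
A':=\tau_{\le -1}K,
\qquad
B':=\tau^{+}_{\le 0}K,
\]
where $\tau^{+}_{\le 0}$ is the torsion-tipped truncation. Concretely, $B'$ is the subcomplex of $\tau_{\le 0}K$ whose degree-$0$ stalk cohomology is $H^0(i^*K)_{\tors}\cong H^2(L,\mathbb Z)_{\tors}$. It can be built as the fibre of
\[
\tau_{\le 0}K\to i_*\bigl(H^2(L,\mathbb Z)/\tors\bigr)[0].
\]
I then check the point-stratum conditions of Lemma~\ref{lem:point-stratum-bounds} for each model, using the triangle $i^!F\to i^*F\to i^*Rj_*j^*F\to$ and the list of stalks in Lemma~\ref{lem:stalks-of-K}.
\begin{itemize}
\item For $A'$: the stalk $i^*A'$ lives in degrees $\le -1$. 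The map $i^*A'\to i^*K$ is an isomorphism in degrees $\le -1$, so $H^k(i^!A')=0$ for $k\le 0$.
\item For $B'$: the stalk $i^*B'$ lives in degrees $\le 0$, and its $H^0$ is torsion, which is compatible with the $p_+$ stalk condition. The map $i^*B'\to i^*K$ is an isomorphism in degrees $\le -1$ and injective on $H^0$. Hence $H^k(i^!B')=0$ for $k\le 0$, $H^1(i^!B')\cong H^2(L,\mathbb Z)/\tors$, and $H^2(i^!B')\cong \mathbb Z$. In particular the costalk condition ($H^0(i^!B')=0$, trivially torsion-free) holds.
\end{itemize}
By uniqueness of intermediate extensions in each heart, characterized by these stalk and costalk conditions at the point stratum, I conclude $A\cong A'$ and $B\cong B'$. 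The natural map $u$ then becomes the inclusion $\tau_{\le -1}K\to\tau^+_{\le 0}K$.

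\emph{Step 2: the cone and the finiteness of $E$.} With these models, the cone of $u$ is $i_*$ of $H^2(L,\mathbb Z)_{\tors}$, concentrated in the single critical degree. This is the concentration that Proposition~\ref{prop:cone-range-and-boundary} left open. Since $H^0(i^*A)=0$, the sequence (vi) of Proposition~\ref{prop:cone-range-and-boundary} gives
\[
E=H^0(i^*B)\cong H^0(i^*C^\bullet)\cong H^2(L,\mathbb Z)_{\tors}.
\]
This group is finite because $L$ is a compact $3$-manifold. I would match the degree bookkeeping with the shift convention written as $E[1]$ in the statement, following Jung--Saito's normalization, and record that this is a normalization of where the point-supported term sits rather than new content. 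Analytic invariance then follows from the functoriality noted in Section~2.

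\emph{Step 3: self-duality.} I will use the following facts.
\begin{itemize}
\item Since $\mathbb D(\mathbb Z_U[2])\cong\mathbb Z_U[2]$ and $\mathbb D$ interchanges ${}^p$ and ${}^p_+$ $t$-structures by definition, we get $\mathbb D A\cong B$ and $\mathbb D B\cong A$.
\item $\mathbb D u$ is identified with $u$, because both restrict to the identity on $U$ and $\Hom(A,B)\to\Hom(j^*A,j^*B)$ is injective for intermediate extensions.
\item $\mathbb D(i_*E)\cong i_*R\Hom(E,\mathbb Z)\cong i_*\Ext^1(E,\mathbb Z)[-1]$.
\item The torsion linking form on $H^2(L,\mathbb Z)_{\tors}\cong H_1(L,\mathbb Z)_{\tors}$ is nondegenerate, which gives $\Ext^1(E,\mathbb Z)\cong E$.
\end{itemize}
Dualizing the triangle and rotating then returns a triangle of the same shape.

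\emph{Main obstacle.} I expect the hardest step to be Step 1, specifically justifying that the torsion-tipped truncation really is ${}^p_+j_{!*}$ over $\mathbb Z$ for an arbitrary, not necessarily rational, normal surface germ, where $H^2(L,\mathbb Z)$ may have a free part. My first approach is the costalk computation sketched in Step 1. If that is insufficient, I would fall back on Jung--Saito's statement \cite[Remark~6.4]{JungSaitoFactoriality}, with the Appendix handling the rational case.
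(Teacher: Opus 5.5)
Your route is genuinely different from the paper's. The paper proves only the formal range statement (Proposition~\ref{prop:cone-range-and-boundary}) and then imports single-degree concentration and self-duality wholesale from Jung--Saito. You instead identify the two intermediate extensions as $\tau_{\le -1}K$ and $\tau^+_{\le 0}K$ and read everything off.

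That identification is sound for every normal surface germ, so the ``main obstacle'' you anticipate is not one. A free part of $H^2(L,\mathbb Z)$ lands in $H^1(i^!B')\cong H^2(L,\mathbb Z)/\tors$, and the ${}^p_+$ costalk condition for an intermediate extension only asks this group to be torsion-free. No rationality hypothesis and no fallback citation is needed.

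One reference should be corrected. The conditions that characterize $j_{!*}$ are the strict ones, not the heart conditions of Lemma~\ref{lem:point-stratum-bounds}. For $A'$ these are $i^*A'\in{}^pD^{\le -1}$ and $i^!A'\in{}^pD^{\ge 1}$. For $B'$ they are $H^{>0}(i^*B')=0$ with $H^0(i^*B')$ torsion, and $H^{\le 0}(i^!B')=0$ with $H^1(i^!B')$ torsion-free. Your stalk and costalk computations do establish these strict conditions. What your route buys is an actual proof, independent of the citation, with $E\cong H^2(L,\mathbb Z)_{\tors}$ as a by-product.

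The genuine gap is the last move of Step~2. There is no normalization freedom in the cone of the fixed morphism $u$.

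\begin{itemize}
\item By your own model, $\operatorname{Cone}(u)\cong\tau_{\ge 0}B'\cong i_*E$, concentrated in degree $0$. In particular $H^{-1}(i^*C^\bullet)=0$, which also follows from part (vi) of Proposition~\ref{prop:cone-range-and-boundary} since $H^0(i^*A)=0$.
\item By contrast, $E[1]$ with $E$ in degree $0$ is concentrated in degree $-1$, and $E\neq 0$ already for $A_1$.
\item Rotating the triangle does not help. In fact $\Hom(A,B)\cong\Hom(j^*A,j^*B)\cong\mathbb Z$, so the only morphisms $A\to B$ with point-supported cone are $\pm u$, and their cone is $i_*E$.
\end{itemize}

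Hence a triangle ${}^pIC_X\mathbb Z\to{}^p_+IC_X\mathbb Z\to E[1]\to$ does not exist when $E\neq0$.

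Your Step~3 detects the same problem. Suppose $\operatorname{Cone}(u)\cong i_*E[m]$. Dualizing and rotating gives a triangle on $u$ whose third term is $\mathbb D(i_*E[m])[1]\cong i_*\Ext^1(E,\mathbb Z)[-m]$. So the triangle is self-dual only for $m=0$, which is also the only placement compatible with $\mathbb D(E)=E[-1]$ as the paper quotes it.

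What your argument proves, and what is true, is the self-dual triangle ${}^pIC_X\mathbb Z\to{}^p_+IC_X\mathbb Z\to i_*E\xrightarrow{+1}$, equivalently $i_*E[-1]\to{}^pIC_X\mathbb Z\to{}^p_+IC_X\mathbb Z\xrightarrow{+1}$. You should state that version. You should also flag the $E[1]$ in the statement as an off-by-one error rather than absorb it as a convention. The paper's proof inherits this error through its citation, and its Appendix concedes that its own computation $i_*E$ ``differs by a shift.''
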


\begin{proof}
By Proposition~\ref{prop:cone-range-and-boundary}, the cone
\[
C^\bullet:=\operatorname{Cone}\!\Bigl({}^pIC_X\mathbb Z\to {}^p_+IC_X\mathbb Z\Bigr)
\]
is supported at $0$, has cohomology only in degrees $-1,0,1$, and its degree-$1$ stalk
cohomology is torsion.

At this stage one needs the codimension-two local description of the dual middle extension.
Jung--Saito prove in \cite[Remark~6.4]{JungSaitoFactoriality} that if
\[
E:=H^0({}^p_+IC_X\mathbb Z)_0,
\]
then $E$ is finite and there is a self-dual distinguished triangle
\[
{}^pIC_X\mathbb Z \to {}^p_+IC_X\mathbb Z \to E[1]\to ,
\]
with
\[
\mathbb D({}^pIC_X\mathbb Z)={}^p_+IC_X\mathbb Z,
\qquad
\mathbb D(E)=E[-1].
\]
Thus the cone $C^\bullet$ is in fact concentrated in a single cohomological degree and
identifies with $E[1]$, which is exactly the asserted distinguished triangle.

The point of Lemmas~\ref{lem:cone-supported-at-zero}--\ref{lem:point-stratum-bounds} and Proposition~\ref{prop:cone-range-and-boundary} is that they make explicit the entire formal part of the argument and isolate the exact place where the codimension-two local statement is still used.
\end{proof}

\begin{remark}\label{rem:what-prop-self-dual-triangle-shows}
Proposition~\ref{prop:self-dual-triangle} packages three distinct facts:
\begin{enumerate}
\item[(a)] the discrepancy between the two middle extensions is supported at the unique
singular point;
\item[(b)] that discrepancy is represented by a finite abelian group $E$ placed in a single
cohomological degree;
\item[(c)] the resulting correction package is self-dual.
\end{enumerate}
In the present paper, (a), the local model \(K=Rj_*\mathbb Z_U[2]\), and the cohomological range statement are proved directly.  The rational codimension-two comparison is proved in Appendix~\ref{app:rational-codim2} via Friedman's
torsion-sensitive Deligne-sheaf framework and the explicit heart comparison of Propositions~\ref{prop:app-ordinary-heart-match} and
\ref{prop:app-dual-heart-match}, while the full general single-degree
concentration and self-duality are still taken from the Jung--Saito local surface statement.
\end{remark}

\subsection{First properties of \texorpdfstring{$E$}{E}}

We now record the immediate consequences needed later.

\begin{proposition}\label{prop:first-properties-E}
Let $(X,0)$ be a germ of a normal complex analytic surface, and let
\[
E:=H^0({}^p_+IC_X\mathbb Z)_0.
\]
Then the following hold.
\begin{enumerate}
\item[(i)] The group $E$ is finite.
\item[(ii)] One has $E=0$ if and only if the natural morphism
\[
{}^pIC_X\mathbb Z \longrightarrow {}^p_+IC_X\mathbb Z
\]
is an isomorphism.
\item[(iii)] The group $E$ is invariant under analytic isomorphism of germs.
\end{enumerate}
\end{proposition}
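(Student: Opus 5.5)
The plan is to derive all three assertions from the distinguished triangle of Proposition~\ref{prop:self-dual-triangle},
\[
{}^pIC_X\mathbb Z \xrightarrow{\;u\;} {}^p_+IC_X\mathbb Z \longrightarrow i_*E[1]\overset{+1}{\longrightarrow},
\]
together with the cone analysis of Proposition~\ref{prop:cone-range-and-boundary}. Each item then becomes a short formal consequence.

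For (i), I use that $E[1]$ is the cone, so $E=H^{-1}(i^*C^\bullet)$. By Proposition~\ref{prop:cone-range-and-boundary}(vi), this group injects into $H^0(i^*A)$. First I would note that $E$ is finitely generated: it is a stalk cohomology group of a constructible complex with $\mathbb Z$-coefficients, and constructibility at the point $0$ gives finite generation. Next I show $E\otimes\mathbb Q=0$. Since $-\otimes\mathbb Q$ is exact, it carries the triangle to the corresponding one with $\mathbb Q$-coefficients, where both intermediate extensions coincide with the rational middle extension (the two hearts agree rationally), so the cone is zero. A finitely generated group with zero rationalization is finite. This also agrees with the finiteness recorded by Jung--Saito and invoked in Proposition~\ref{prop:self-dual-triangle}.

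For (ii), suppose $E=0$. Then the third vertex $i_*E[1]$ vanishes, so $u$ is an isomorphism. Conversely, if $u$ is an isomorphism, its cone is zero. Taking $H^{-1}i^*$ of the cone $i_*E[1]$ returns $E$, so $E=0$.

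For (iii), let $\phi:(X,0)\to(Y,0)$ be an analytic isomorphism of germs. I would check that $\phi^*$ commutes with ${}^pj_{!*}$ and ${}^p_+j_{!*}$. Both $t$-structures are defined by pointwise stalk and costalk conditions, and $\phi^*=\phi^!$ preserves these conditions because $\phi$ is a homeomorphism preserving dimensions and strata. It also sends $\mathbb Z_{Y_{\mathrm{reg}}}[2]$ to $\mathbb Z_{X_{\mathrm{reg}}}[2]$. Hence $\phi^*({}^p_+IC_Y\mathbb Z)\cong{}^p_+IC_X\mathbb Z$, and taking degree-zero stalks at $0$ gives $E_Y\cong E_X$. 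The only subtle point is (i): the finite generation step needs the constructibility of the intersection complexes, and the rational step needs the rational agreement of the two middle extensions.
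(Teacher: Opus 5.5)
Your (ii) is the paper's argument (read off the cone of the triangle in Proposition~\ref{prop:self-dual-triangle}). Your (iii) is the paper's functoriality argument, with the $t$-exactness of $\phi^*=\phi^!$ and the compatibility with $j$ spelled out.

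The genuine difference is (i). The paper gets finiteness purely by citation: it is part of the Jung--Saito local statement in their Remark~6.4. You instead derive it from finite generation plus $E\otimes\mathbb{Q}=0$, which makes (i) independent of that external input.

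The rational step, however, needs more than ``the two hearts agree rationally.'' That only gives ${}^pIC_X\mathbb{Q}\cong{}^p_+IC_X\mathbb{Q}$. You need $({}^p_+j_{!*}\mathbb{Z}_U[2])\otimes\mathbb{Q}\cong{}^pj_{!*}\mathbb{Q}_U[2]$, and likewise for ${}^pj_{!*}$; that is, integral intermediate extension must commute with rationalization. This does hold, and you should say why:
\begin{itemize}
\item $j_{!*}$ is characterized by $i^*j_{!*}(-)$ lying in $D^{\le -1}$ and $i^!j_{!*}(-)$ lying in $D^{\ge 1}$ of the relevant (ordinary or dual) $t$-structure on the point;
\item both the ${}^p$ and ${}^p_+$ versions of these conditions imply the standard rational ones after $\otimes\mathbb{Q}$;
\item $i^*$ and $i^!$ commute with $\otimes\mathbb{Q}$ on constructible complexes.
\end{itemize}
The same characterization in fact gives (i) with no rationalization at all. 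The condition $i^*B\in{}^p_+D^{\le -1}(\{0\})$ means $H^k(i^*B)=0$ for $k>0$ and $H^0(i^*B)$ is torsion. So $E=H^0(i^*B)$ is finitely generated and torsion, hence finite.

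Drop the aside that $E=H^{-1}(i^*C^\bullet)$ injects into $H^0(i^*A)$. It is unused, and it exposes a shift error in the normalization $E[1]$ you inherited from Proposition~\ref{prop:self-dual-triangle}. Since $A$ is an intermediate extension, $i^*A\in D^{\le -1}$, so $H^0(i^*A)=0$. Your injection would then force $E=0$, contradicting $E\cong\mathbb{Z}/2\mathbb{Z}$ for $A_1$.

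The consistent normalization is $\operatorname{Cone}(u)\cong i_*E$ in degree $0$, as in Appendix~\ref{app:rational-codim2}. It is also the normalization compatible with self-duality, since $\mathbb{D}(i_*E)\cong i_*\Ext^1(E,\mathbb{Z})[-1]$. Your arguments for (i) and (ii) only use that the cone is $i_*E$ up to a shift, so they go through unchanged with $H^0$ in place of $H^{-1}$.
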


\begin{proof}
Assertion (i) is part of the Jung--Saito local surface statement cited in
Proposition~\ref{prop:self-dual-triangle}; see \cite[Remark~6.4]{JungSaitoFactoriality}.

For (ii), by Proposition~\ref{prop:self-dual-triangle} the cone of the natural morphism is
$E[1]$. Hence the morphism is an isomorphism if and only if its cone vanishes, and this is
equivalent to $E=0$.

For (iii), both ${}^pIC_X\mathbb Z$ and ${}^p_+IC_X\mathbb Z$ are defined by intermediate
extension from the smooth locus, and these constructions are functorial under analytic
isomorphisms of germs. Therefore the stalk group
\[
E=H^0({}^p_+IC_X\mathbb Z)_0
\]
depends only on the analytic type of $(X,0)$.
\end{proof}

\begin{remark}\label{rem:E-purely-integral}
Since ${}^pIC_X\mathbb Q \cong {}^p_+IC_X\mathbb Q$, the group $E$ vanishes after tensoring
with $\mathbb Q$. Thus $E$ is a purely integral invariant. In particular, all of the
information carried by $E$ is torsion information.
\end{remark}

\begin{remark}\label{rem:E-local-discriminant}
The later sections will show that $E$ may be computed in two concrete ways: first, by the
discriminant group of the exceptional lattice of the minimal resolution, and second, in the
isolated hypersurface case, by the finite obstruction attached to the integral variation map
$T-\mathrm{id}$. From this point of view, $E$ should be regarded as an integral discriminant
invariant of the surface singularity.
\end{remark}

\section{A topological realization of the obstruction}

In this section we pass from the perverse-sheaf definition of the local group $E$ to a
concrete local-topological model.  Let $(X,0)$ be a germ of a normal complex analytic
surface.  The point is to identify the correction term
\[
E:=H^0({}^p_+IC_X\mathbb Z)_0
\]
with a canonically defined finite group extracted from the topology of the link of the
singularity.

The section has two logically distinct parts.  First, we recall the standard relation
between local cohomology and the cohomology of the link.  Second, we compute the local
stalks of $Rj_*\mathbb Z_U[2]$ and use the point-stratum semi-perversity conditions from
Section~2 to identify the precise topological group that survives in the dual middle
extension.

\subsection{Local topology of a normal surface germ}

Fix a sufficiently small closed ball $B_{\varepsilon}$ in a smooth ambient manifold
containing a representative of the germ $(X,0)$, and set
\[
X_{\varepsilon}:=X\cap B_{\varepsilon},
\qquad
L:=X\cap \partial B_{\varepsilon}.
\]
For $\varepsilon>0$ sufficiently small, the homeomorphism type of the pair
$(X_{\varepsilon},L)$ is independent of $\varepsilon$, and $L$ is a compact connected
oriented real $3$-manifold, called the link of the singularity.  Since $X$ is normal of
complex dimension $2$, after shrinking we may assume that $0$ is the unique singular point
of $X_{\varepsilon}$ and that
\[
X_{\varepsilon}\setminus\{0\}\simeq L\times (0,1)
\]
up to homotopy.  In particular, the punctured germ is determined topologically by $L$.

Let $\pi:\widetilde X\to X$ be a resolution of singularities, and let
\[
E_{\pi}:=\pi^{-1}(0)
\]
be the exceptional divisor.  If $N(E_{\pi})$ is a sufficiently small tubular neighborhood of
$E_{\pi}$ in $\widetilde X$, then $\partial N(E_{\pi})$ is canonically diffeomorphic to
$L$.  Thus the link may be viewed equally as the boundary of a Milnor neighborhood of the
singular point or as the boundary of a resolution neighborhood of the exceptional divisor.

We now record the standard comparison between local cohomology and the cohomology of the
link.

\begin{lemma}\label{lem:local-link-cohomology}
Let $(X,0)$ be a germ of a normal complex analytic surface, and let $L$ be the link of the
singularity.  Then for every $k\ge 1$ there are canonical isomorphisms
\[
H^k_{\{0\}}(X_{\varepsilon},\mathbb Z)
\cong
\widetilde H^{k-1}(L,\mathbb Z).
\]
In particular,
\[
H^3_{\{0\}}(X_{\varepsilon},\mathbb Z)\cong H^2(L,\mathbb Z).
\]
\end{lemma}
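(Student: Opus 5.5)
The plan is to use the long exact sequence of the pair $(X_\varepsilon, X_\varepsilon\setminus\{0\})$ together with the conical structure of the germ. By excision and the definition of local cohomology,
\[
H^k_{\{0\}}(X_\varepsilon,\mathbb Z)\cong H^k(X_\varepsilon, X_\varepsilon\setminus\{0\};\mathbb Z),
\]
and this sits in the long exact sequence
\[
\cdots\to H^{k-1}(X_\varepsilon)\to H^{k-1}(X_\varepsilon\setminus\{0\})\xrightarrow{\ \delta\ } H^k_{\{0\}}(X_\varepsilon)\to H^k(X_\varepsilon)\to H^k(X_\varepsilon\setminus\{0\})\to\cdots.
\]

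First, I will invoke the conic structure theorem: for $\varepsilon$ small, $X_\varepsilon$ is homeomorphic to the cone $C(L)$ on the link, with vertex $0$. Hence $X_\varepsilon$ is contractible, so $H^k(X_\varepsilon,\mathbb Z)=0$ for $k\ge 1$ and $H^0(X_\varepsilon,\mathbb Z)=\mathbb Z$. Second, as recalled above, $X_\varepsilon\setminus\{0\}\simeq L\times(0,1)\simeq L$, so $H^{k-1}(X_\varepsilon\setminus\{0\})\cong H^{k-1}(L,\mathbb Z)$.

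For $k\ge 2$, both $H^{k-1}(X_\varepsilon)$ and $H^k(X_\varepsilon)$ vanish, so $\delta$ is an isomorphism $H^{k-1}(L)\cong H^k_{\{0\}}(X_\varepsilon)$, and for $k-1\ge 1$ reduced and unreduced cohomology agree. For $k=1$, the relevant segment is
\[
0\to H^0_{\{0\}}\to H^0(X_\varepsilon)\to H^0(L)\to H^1_{\{0\}}\to 0 .
\]
Here the restriction $H^0(X_\varepsilon)=\mathbb Z\to H^0(L)$ is the map sending a constant to a constant. Its cokernel is by definition $\widetilde H^0(L)$, which vanishes because $L$ is connected by normality. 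Equivalently, one can run the argument with reduced cohomology throughout, since the pair sequence is compatible with augmentation. Canonicity follows because every map used is a restriction map or a connecting homomorphism, and the identification with $L$ is via the deformation retraction $X_\varepsilon\setminus\{0\}\to L$, which is canonical up to homotopy. Independence of $\varepsilon$ follows from the stability of the pair $(X_\varepsilon,L)$.

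The case $k=3$ gives $H^3_{\{0\}}(X_\varepsilon)\cong H^2(L,\mathbb Z)$. As a consistency check, this agrees with Lemma~\ref{lem:stalks-of-K}: the triangle $i_*i^!\mathbb Z_X\to\mathbb Z_X\to Rj_*\mathbb Z_U$ reproduces the same sequence at the stalk level.

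The only genuinely nontrivial input is the conic structure theorem, which gives contractibility of $X_\varepsilon$ and the product structure on the punctured neighborhood. I will cite it (Milnor, Burghelea–Verona) rather than reprove it. The only point requiring care is the low-degree bookkeeping at $k=1$, where the reduced cohomology enters.
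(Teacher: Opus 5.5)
Your proposal is correct and follows essentially the same route as the paper: the long exact sequence of the pair $(X_\varepsilon,X_\varepsilon\setminus\{0\})$, contractibility of $X_\varepsilon$, and the homotopy equivalence $X_\varepsilon\setminus\{0\}\simeq L$. Your explicit treatment of the $k=1$ segment, where $H^1_{\{0\}}(X_\varepsilon)$ is the cokernel of $H^0(X_\varepsilon)\to H^0(L)$ and hence equals $\widetilde H^0(L)$, simply spells out what the paper compresses into the phrase ``reduced cohomology.''
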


\begin{proof}
Consider the long exact sequence of the pair
\[
(X_{\varepsilon},X_{\varepsilon}\setminus\{0\}).
\]
Since $X_{\varepsilon}$ is contractible, the relative groups are identified with local
cohomology at $0$, and one has
\[
H^k_{\{0\}}(X_{\varepsilon},\mathbb Z)
\cong
H^k(X_{\varepsilon},X_{\varepsilon}\setminus\{0\};\mathbb Z).
\]
The long exact sequence of the pair therefore gives canonical isomorphisms
\[
H^k_{\{0\}}(X_{\varepsilon},\mathbb Z)
\cong
\widetilde H^{k-1}(X_{\varepsilon}\setminus\{0\},\mathbb Z)
\qquad (k\ge 1).
\]
Using the homotopy equivalence
\[
X_{\varepsilon}\setminus\{0\}\simeq L,
\]
we obtain
\[
H^k_{\{0\}}(X_{\varepsilon},\mathbb Z)
\cong
\widetilde H^{k-1}(L,\mathbb Z),
\]
as asserted.  The last statement is the case $k=3$.
\end{proof}

The next elementary observation isolates the genuinely integral part of the second
cohomology of the link.

\begin{lemma}\label{lem:torsion-second-link}
Let $L$ be a compact oriented connected real $3$-manifold.  Then the torsion subgroup of
$H^2(L,\mathbb Z)$ is canonically isomorphic to the torsion subgroup of $H_1(L,\mathbb Z)$.
In particular, $H^2(L,\mathbb Z)_{\tors}$ is finite.
\end{lemma}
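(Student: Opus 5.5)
The plan is to combine the universal coefficient theorem with Poincaré duality on the closed oriented $3$-manifold $L$. Since $L$ is compact, $H_1(L,\mathbb Z)$ is finitely generated, so it splits noncanonically as $\mathbb Z^{b_1}\oplus H_1(L,\mathbb Z)_{\tors}$ with finite torsion part.

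First I will apply the universal coefficient theorem in degree $2$:
\[
0\longrightarrow \Ext^1(H_1(L,\mathbb Z),\mathbb Z)\longrightarrow H^2(L,\mathbb Z)\longrightarrow \Hom(H_2(L,\mathbb Z),\mathbb Z)\longrightarrow 0 .
\]
The right-hand term is torsion-free, so the torsion subgroup of $H^2(L,\mathbb Z)$ is exactly the image of the $\Ext$ term. This identification is canonical, since the first map is natural.

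Second, I will compute the $\Ext$ term:
\[
\Ext^1(H_1(L,\mathbb Z),\mathbb Z)\cong \Ext^1(H_1(L,\mathbb Z)_{\tors},\mathbb Z),
\]
because $\Ext^1(\mathbb Z,\mathbb Z)=0$. The isomorphism is induced canonically by the inclusion of the torsion subgroup: the quotient $H_1/H_1{}_{\tors}$ is free, so both its $\Ext^1$ and the relevant $\Hom$ term into $\mathbb Z$ vanish, up to the obvious check in the long exact sequence. For a finite abelian group $G$, the sequence $0\to\mathbb Z\to\mathbb Q\to\mathbb Q/\mathbb Z\to 0$ gives
\[
\Ext^1(G,\mathbb Z)\cong \Hom(G,\mathbb Q/\mathbb Z)=G^\wedge,
\]
the Pontryagin dual, which is noncanonically isomorphic to $G$. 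So the canonical statement is $H^2(L,\mathbb Z)_{\tors}\cong \Hom(H_1(L,\mathbb Z)_{\tors},\mathbb Q/\mathbb Z)$.

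Third, I will remove the dual by Poincaré duality. The cap product with the fundamental class gives a canonical isomorphism $H^2(L,\mathbb Z)\cong H_1(L,\mathbb Z)$, and it restricts to an isomorphism of torsion subgroups. Equivalently, the linking form on $H_1(L,\mathbb Z)_{\tors}$ is nondegenerate and identifies this group with its Pontryagin dual. Either route yields a canonical isomorphism, and finiteness follows because $H_1(L,\mathbb Z)$ is finitely generated.

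The only subtle point is the word \emph{canonical}. The direct route through $\Ext$ gives only the dual group, so I will use Poincaré duality (cap product with $[L]$) as the canonical map. The $\Ext$ computation then serves only to show that torsion corresponds to torsion and that this torsion is finite.
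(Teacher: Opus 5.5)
Your proof is correct, but it differs from the paper's at the decisive step. The paper's proof uses only the universal coefficient sequence. It identifies $H^2(L,\mathbb Z)_{\tors}$ with $\Ext^1(H_1(L,\mathbb Z),\mathbb Z)$ and then asserts that this group is canonically isomorphic to $H_1(L,\mathbb Z)_{\tors}$. As you point out, the natural target of that $\Ext$ group is the Pontryagin dual $\Hom(H_1(L,\mathbb Z)_{\tors},\mathbb Q/\mathbb Z)$. So the paper's route alone gives only a non-canonical isomorphism with $H_1(L,\mathbb Z)_{\tors}$, or a canonical one with its dual. Removing the dual needs extra structure, and you supply exactly that with cap product against $[L]$, or equivalently the linking form.

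The trade-off is generality versus canonicity. The paper's argument uses nothing about $L$ except finite generation of its homology, so it works for any finite CW complex and needs neither an orientation nor empty boundary. Your argument gives an isomorphism that is canonical once the orientation is fixed, and in fact proves the stronger statement $H^2(L,\mathbb Z)\cong H_1(L,\mathbb Z)$. It does, however, use that $L$ is closed: for a compact $3$-manifold with boundary, Poincar\'e--Lefschetz duality gives $H_1(L,\partial L;\mathbb Z)$ instead. That is harmless here, since the link is closed.

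Once you have the Poincar\'e duality isomorphism, your $\Ext$ computation is logically unnecessary. Any group isomorphism carries torsion onto torsion, and finiteness follows directly from finite generation of $H_1(L,\mathbb Z)$.
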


\begin{proof}
By the universal coefficient theorem there is a short exact sequence
\[
0\longrightarrow \Ext^1_{\mathbb Z}\bigl(H_1(L,\mathbb Z),\mathbb Z\bigr)
\longrightarrow H^2(L,\mathbb Z)
\longrightarrow \Hom\bigl(H_2(L,\mathbb Z),\mathbb Z\bigr)
\longrightarrow 0.
\]
The right-hand term is torsion-free.  Hence the torsion subgroup of $H^2(L,\mathbb Z)$ is
exactly
\[
\Ext^1_{\mathbb Z}\bigl(H_1(L,\mathbb Z),\mathbb Z\bigr),
\]
which is canonically isomorphic to the torsion subgroup of $H_1(L,\mathbb Z)$.  Since
$H_1(L,\mathbb Z)$ is finitely generated, this torsion subgroup is finite.
\end{proof}

\subsection{From the distinguished triangle to link cohomology}

We now connect the local correction term from Section~2 with the topology of the link.
Let
\[
j:U:=X\setminus\{0\}\hookrightarrow X
\]
be the inclusion of the punctured germ, and set
\[
K:=Rj_*\mathbb Z_U[2].
\]
Since $U$ is smooth of complex dimension $2$, both ${}^pIC_X\mathbb Z$ and
${}^p_+IC_X\mathbb Z$ are intermediate extensions of $\mathbb Z_U[2]$.  The point is to
understand what survives at the singular point after imposing the ordinary and dual
middle-perversity conditions.

We begin with the local calculation of the stalks of $K$.

\begin{lemma}\label{lem:stalk-Rj-star}
With notation as above, one has canonical isomorphisms
\[
H^k(i_0^*K)\cong H^{k+2}(L,\mathbb Z)
\qquad \text{for all } k\in \mathbb Z.
\]
In particular,
\[
H^{-2}(i_0^*K)\cong H^0(L,\mathbb Z)\cong \mathbb Z,
\]
\[
H^{-1}(i_0^*K)\cong H^1(L,\mathbb Z),
\]
\[
H^{0}(i_0^*K)\cong H^2(L,\mathbb Z),
\]
\[
H^{1}(i_0^*K)\cong H^3(L,\mathbb Z)\cong \mathbb Z,
\]
and $H^k(i_0^*K)=0$ for $k\notin\{-2,-1,0,1\}$.
\end{lemma}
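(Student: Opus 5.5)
This is the same computation as Lemma~\ref{lem:stalks-of-K}, and I will follow the same route, with one extra point: justifying the identification of the stalk of $R^qj_*\mathbb Z_U$ with link cohomology. First I rewrite the stalk cohomology of the shifted pushforward as
\[
H^k(i_0^*K)=H^k\bigl(i_0^*Rj_*\mathbb Z_U[2]\bigr)\cong H^{k+2}\bigl(i_0^*Rj_*\mathbb Z_U\bigr)\cong (R^{k+2}j_*\mathbb Z_U)_0 .
\]
This is formal: $i_0^*$ is exact on sheaves and commutes with shifts, and the cohomology sheaves of $Rj_*\mathbb Z_U$ are the $R^qj_*\mathbb Z_U$.

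Next I compute the stalk at $0$. By definition of the derived pushforward,
\[
(R^qj_*\mathbb Z_U)_0=\varinjlim_{V\ni 0} H^q(V\setminus\{0\},\mathbb Z),
\]
with the limit over open neighborhoods $V$ of $0$. The cofinal system $X_\varepsilon^\circ:=X\cap \mathrm{int}\,B_\varepsilon$ suffices. By the conic structure theorem recalled in Section~3, $X_\varepsilon^\circ\setminus\{0\}\cong L\times(0,\varepsilon)$ for $\varepsilon$ small. For $\varepsilon'<\varepsilon$ the restriction maps are induced by the inclusion $L\times(0,\varepsilon')\subset L\times(0,\varepsilon)$, which is a homotopy equivalence. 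So the direct system is eventually constant with isomorphic transition maps, and the limit is canonically $H^q(L,\mathbb Z)$. Canonicity, meaning independence of $\varepsilon$ and of the choice of the conic homeomorphism, follows because any two such identifications are compatible through the restriction maps.

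Finally I read off the degrees using that $L$ is a compact connected oriented closed $3$-manifold. This gives $H^0(L,\mathbb Z)\cong\mathbb Z$ by connectedness, $H^3(L,\mathbb Z)\cong\mathbb Z$ by orientability and compactness, and $H^q(L,\mathbb Z)=0$ for $q<0$ or $q>3$ because $L$ is a $3$-dimensional CW complex. Connectedness of $L$ follows from normality: $U$ near $0$ is connected, since a normal germ is locally irreducible. Shifting by $2$ gives the list in degrees $-2,-1,0,1$ and vanishing elsewhere.

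The main obstacle is the limit step, namely ensuring that the conic structure gives a genuinely cofinal system of neighborhoods whose punctured versions all deformation retract compatibly onto $L$. I would cite the local conic structure theorem for analytic sets (Milnor, Burghelea--Verona) rather than reprove it. Everything else is bookkeeping of shifts.
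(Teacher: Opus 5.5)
Your proposal is correct and follows essentially the same route as the paper: identify $H^k(i_0^*K)$ with $(R^{k+2}j_*\mathbb Z_U)_0$, compute this stalk as the cohomology of a small punctured neighborhood homotopy equivalent to $L$, and read off the degrees from the fact that $L$ is a closed connected oriented $3$-manifold. Your additions — the cofinal conic system that makes the direct limit stabilize, and the connectedness of $L$ deduced from local irreducibility of the normal germ — only make explicit steps that the paper leaves implicit.
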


\begin{proof}
By definition of derived pushforward,
\[
H^k(i_0^*K)
=
H^k\bigl(i_0^*Rj_*\mathbb Z_U[2]\bigr)
\cong
(R^{k+2}j_*\mathbb Z_U)_0.
\]
The stalk of $R^mj_*\mathbb Z_U$ at $0$ is the cohomology of a sufficiently small punctured
neighborhood of $0$ in $U$, hence of $X_{\varepsilon}\setminus\{0\}$.  Since
$X_{\varepsilon}\setminus\{0\}\simeq L$, this gives
\[
(R^{k+2}j_*\mathbb Z_U)_0 \cong H^{k+2}(L,\mathbb Z).
\]
This proves the general formula.  The explicit list follows because $L$ is a connected
oriented closed real $3$-manifold, so
\[
H^0(L,\mathbb Z)\cong \mathbb Z,
\qquad
H^3(L,\mathbb Z)\cong \mathbb Z,
\qquad
H^m(L,\mathbb Z)=0 \ \text{for } m<0 \text{ or } m>3.
\]
\end{proof}

The next step is the key local observation.  The group $H^2(L,\mathbb Z)$ appears as the
degree-zero stalk of $Rj_*\mathbb Z_U[2]$ at the singular point.  The ordinary and dual
middle extensions are obtained from this local model by imposing the point-stratum
conditions of Lemma~\ref{lem:point-stratum-bounds}.  In codimension two, those conditions have the following effect:
the ordinary middle extension removes the entire degree-zero contribution at the point,
while the dual middle extension retains precisely its torsion part.  This is the local
mechanism behind the appearance of $H^2(L,\mathbb Z)_{\tors}$.

\begin{proposition}\label{prop:E-link-model}
Let $(X,0)$ be a germ of a normal complex analytic surface, and let $L$ be its link. Then
there is a canonical isomorphism
\[
E\cong H^2(L,\mathbb Z)_{\tors}.
\]
Equivalently,
\[
E\cong H^3_{\{0\}}(X_{\varepsilon},\mathbb Z)_{\tors}.
\]
\end{proposition}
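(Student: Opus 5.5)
We will compute the stalk of ${}^p_+IC_X\mathbb Z$ at $0$ directly from the Deligne-type description of the intermediate extension in the two-stratum situation $X=U\sqcup\{0\}$. The starting point is the local model $K=Rj_*\mathbb Z_U[2]$ and Lemma~\ref{lem:stalk-Rj-star}, which gives $H^k(i_0^*K)\cong H^{k+2}(L,\mathbb Z)$. For a single point stratum, the ordinary middle extension is the truncation $A=\tau_{\le -1}K$. This object satisfies $H^k(i^*A)=0$ for $k>-1$, which is stronger than the stalk bound of Lemma~\ref{lem:point-stratum-bounds}(i). Its costalk condition follows because $i^!A$ is computed by the triangle $i^!A\to i^*A\to i^*j_*j^*A$, and the map $H^k(i^*A)\to H^{k}(L)$-part is an isomorphism for $k\le -1$. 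Hence $H^k(i^!A)=0$ for $k\le 0$, and this degree-$0$ vanishing is exactly what makes the extension "intermediate". In particular $H^0(A)_0=0$.

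For the dual extension we use the torsion-tipped truncation: $B$ is the unique object with $j^*B=\mathbb Z_U[2]$ whose stalk at $0$ agrees with $K$ in degrees $\le -1$, equals the torsion subgroup of $H^0(i^*K)$ in degree $0$, and vanishes in degrees $\ge 1$. Concretely, take $B$ to be the cone-compatible refinement $\tau_{\le -1}K\subset B\subset \tau_{\le 0}K$, with $H^0(i^*B)=H^0(i^*K)_{\tors}\cong H^2(L,\mathbb Z)_{\tors}$. This is Friedman's torsion-tipped truncation. In the rational case it is identified with ${}^p_+j_{!*}$ by Lemma~\ref{lem:bdd-friedman-identification}, but we will verify the identification directly in general. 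We check that $B$ satisfies the conditions of Lemma~\ref{lem:point-stratum-bounds}(ii) and the minimality characterizing $j_{!*}$, namely that $B$ has no nonzero subobjects or quotients supported at $0$ in the ${}^p_+$-heart.

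The costalk verification is the main step. From the triangle $i^!B\to i^*B\to i^*Rj_*j^*B$ we get $H^0(i^!B)=0$ and an injection $H^1(i^!B)\hookrightarrow$ the cokernel of $H^0(L\text{-part})_{\tors}\to H^2(L,\mathbb Z)$. That cokernel is $H^2(L,\mathbb Z)/\tors$, which is torsion-free, and this is precisely the torsion-free costalk condition for $D\vec p$ shifted appropriately. Dually, the stalk condition in degree $0$ is torsion, which matches the ${}^p_+D^{\le 0}$ clause in Lemma~\ref{lem:point-stratum-bounds}(ii). We expect this step to be the main obstacle, because one must also show that the inclusion $B\hookrightarrow\tau_{\le0}K$ realizes the intermediate extension rather than merely some extension in the heart. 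We would handle this by checking $\Hom(i_*G,B)=0$ and $\Hom(B,i_*G[\,\cdot\,])=0$ in the ${}^p_+$-heart for point objects $G$, reducing via adjunction to the stalk and costalk computations just made.

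Once $B$ is identified, we read off the stalk directly: $E=H^0(B)_0=H^2(L,\mathbb Z)_{\tors}$, and this isomorphism is canonical because all maps are induced by the adjunction $B\to Rj_*j^*B=K$. As a consistency check, $\operatorname{Cone}(A\to B)\cong i_*H^2(L,\mathbb Z)_{\tors}$ in the degree predicted by Proposition~\ref{prop:self-dual-triangle}. The second formulation of the statement follows immediately from Lemma~\ref{lem:local-link-cohomology} with $k=3$, which gives $H^3_{\{0\}}(X_\varepsilon,\mathbb Z)\cong H^2(L,\mathbb Z)$, together with passing to torsion subgroups. Lemma~\ref{lem:torsion-second-link} then confirms finiteness, consistent with Proposition~\ref{prop:first-properties-E}(i).
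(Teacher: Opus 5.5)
Your argument is correct, and it takes a genuinely different and more complete route than the paper.

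\textbf{Comparison with the paper.} The paper's proof takes the Jung--Saito triangle of Proposition~\ref{prop:self-dual-triangle} as a black box. It then asserts that, in codimension two, the dual middle extension ``retains precisely the torsion part'' of $H^0(i_0^*K)=H^2(L,\mathbb Z)$. The triangle alone does not say which finite group sits in the cone. The only actual computation in the paper is the Friedman-based one in Appendix~\ref{app:rational-codim2}, and it assumes $(X,0)$ rational.

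You instead construct $B$ explicitly as the torsion-tipped truncation $\tau_{\le -1}K\subset B\subset \tau_{\le 0}K$. You then verify the characterization of $j_{!*}$ for a glued $t$-structure, namely $i^*B\in{}^p_+D^{\le -1}$ and $i^!B\in{}^p_+D^{\ge 1}$ at the point (cf.\ \cite[\S1.4]{BBD82}). This is exactly the proof of the paper's assertion. It works for every normal surface germ, without rationality or the external citation, and it makes the canonical map $E=H^0(i^*B)\hookrightarrow H^0(i^*K)=H^2(L,\mathbb Z)$ explicit.

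In fact the answer is forced:
\begin{itemize}
\item $H^0(i^!B)=0$ makes this map injective;
\item $H^0(i^*B)$ is torsion;
\item torsion-freeness of $H^1(i^!B)\cong\coker\bigl(H^0(i^*B)\to H^2(L,\mathbb Z)\bigr)$ forces the image to contain all torsion.
\end{itemize}
The paper's route buys only brevity and alignment with the Jung--Saito package, including self-duality, which this proposition does not need.

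The step you flag as the main obstacle is not one. For a glued $t$-structure the two point conditions already characterize $j_{!*}$, and your Hom-vanishing reduction is just the proof of that characterization.

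\textbf{Small corrections.}
\begin{itemize}
\item The sequence $H^0(i^*B)\to H^0(i^*K)\to H^1(i^!B)\to H^1(i^*B)=0$ gives an isomorphism $H^1(i^!B)\cong H^2(L,\mathbb Z)/\tors$. It is not an injection of $H^1(i^!B)$ into the cokernel, as you wrote.
\item The conditions you verify are the shifted ones ${}^p_+D^{\le -1}$ and ${}^p_+D^{\ge 1}$. They are not the clauses of Lemma~\ref{lem:point-stratum-bounds}(ii), which only express membership in the heart.
\item Your consistency check is misstated. Since $H^0(i^*A)=0$ and $A\to B$ is a stalk isomorphism in degrees $\le -1$, the cone is $i_*E$ concentrated in degree $0$. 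This is also forced by Proposition~\ref{prop:cone-range-and-boundary}(vi). It matches the normalization of Appendix~\ref{app:rational-codim2}, but not the $E[1]$ (i.e.\ degree $-1$) displayed in Proposition~\ref{prop:self-dual-triangle}. So you should not claim agreement with the degree predicted there.
\end{itemize}
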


\begin{proof}
Let
\[
A:={}^{p}IC_X\mathbb Z,
\qquad
B:={}^{p}_{+}IC_X\mathbb Z,
\qquad
K:=Rj_*\mathbb Z_U[2].
\]
By construction, both $A$ and $B$ extend the same object $\mathbb Z_U[2]$ on $U$.  The
difference between them is therefore entirely concentrated at the point $0$, and
Proposition~\ref{prop:self-dual-triangle} gives the distinguished triangle
\[
A\longrightarrow B\longrightarrow E[1]\overset{+1}{\longrightarrow}.
\]

Now the local model for the extension problem is the stalk complex \(i_0^*K\), whose
degree-zero cohomology is \(H^2(L,\mathbb Z)\) by Lemma~\ref{lem:stalk-Rj-star}.  Combined with the
codimension-two local comparison packaged in Proposition~\ref{prop:self-dual-triangle}, this identifies the finite
local correction with the torsion part of \(H^2(L,\mathbb Z)\).  In particular, the degree-zero
stalk of the dual middle extension is
\[
H^0(B)_0 \cong H^2(L,\mathbb Z)_{\tors}.
\]

By definition, $E:=H^0(B)_0$, hence $E\cong H^2(L,\mathbb Z)_{\tors}$. Finally, by Lemma~\ref{lem:local-link-cohomology},
\[
H^2(L,\mathbb Z)\cong H^3_{\{0\}}(X_{\varepsilon},\mathbb Z),
\]
and therefore
\[
H^2(L,\mathbb Z)_{\tors}\cong H^3_{\{0\}}(X_{\varepsilon},\mathbb Z)_{\tors}.
\]
Combining the two identifications yields the second displayed isomorphism.
\end{proof}

\begin{remark}\label{rem:sec3-dependence}
The proof of Proposition~\ref{prop:E-link-model} uses the local distinguished triangle from Proposition~\ref{prop:self-dual-triangle} together with the local topological computation of Lemma~\ref{lem:stalk-Rj-star}. Thus the topological realization \(E\cong H^2(L,\mathbb Z)_{\tors}\) is integrated into the present argument rather than cited separately as an external consequence.
\end{remark}

\subsection{Torsion, duality, and finiteness}
We now spell out the immediate consequences of the identification
\[
E\cong H^2(L,\mathbb Z)_{\tors}.
\]

\begin{proposition}\label{prop:E-finite-topological}
Let $(X,0)$ be a germ of a normal complex analytic surface with link $L$.  Then:
\begin{enumerate}
\item[(i)] the group $E$ is canonically isomorphic to $H^2(L,\mathbb Z)_{\tors}$;
\item[(ii)] the group $E$ is finite;
\item[(iii)] $E\otimes_{\mathbb Z}\mathbb Q=0$;
\item[(iv)] the self-duality of the distinguished triangle of
Section~2 is compatible with the linking duality on the torsion of $H^2(L,\mathbb Z)$.
\end{enumerate}
\end{proposition}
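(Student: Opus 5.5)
Items (i)--(iii) follow almost formally from what precedes. For (i), invoke Proposition~\ref{prop:E-link-model}, which gives $E\cong H^2(L,\mathbb Z)_{\tors}$. For (ii), combine (i) with Lemma~\ref{lem:torsion-second-link}. That lemma identifies $H^2(L,\mathbb Z)_{\tors}$ with $\Ext^1_{\mathbb Z}(H_1(L,\mathbb Z),\mathbb Z)\cong H_1(L,\mathbb Z)_{\tors}$. Since $L$ is a compact $3$-manifold, $H_1(L,\mathbb Z)$ is finitely generated, so this group is finite. As a consistency check, this agrees with Proposition~\ref{prop:first-properties-E}(i). For (iii), a finite abelian group is killed by its order $N$, so $E\otimes_{\mathbb Z}\mathbb Q=0$, since multiplication by $N$ is invertible on $\mathbb Q$. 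This is also compatible with Remark~\ref{rem:E-purely-integral}.

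The substantive item is (iv). I will make both dualities explicit and compare them.

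On the sheaf side, Proposition~\ref{prop:self-dual-triangle} says that Verdier duality exchanges ${}^pIC_X\mathbb Z$ and ${}^p_+IC_X\mathbb Z$ and sends the point-supported term $i_*E[1]$ to itself. On the point, $\mathbb D(i_*E)=i_*R\Hom(E,\mathbb Z)$. For finite $E$ this is $i_*\Ext^1(E,\mathbb Z)[-1]=i_*\Hom(E,\mathbb Q/\mathbb Z)[-1]$. Self-duality of the triangle therefore yields a canonical isomorphism $\phi:E\xrightarrow{\sim}\Hom(E,\mathbb Q/\mathbb Z)$, that is, a nondegenerate pairing $E\times E\to\mathbb Q/\mathbb Z$.

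On the topological side, the linking form on $H^2(L,\mathbb Z)_{\tors}\cong H_1(L,\mathbb Z)_{\tors}$ comes from the following composite:
\begin{enumerate}
\item the Bockstein $H^1(L,\mathbb Q/\mathbb Z)\to H^2(L,\mathbb Z)_{\tors}$;
\item cup product into $H^3(L,\mathbb Q/\mathbb Z)$;
\item evaluation on $[L]$, landing in $\mathbb Q/\mathbb Z$.
\end{enumerate}
This is equivalently Poincaré duality on $L$ composed with universal coefficients.

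To compare the two pairings, I will realize both inside the local model $K=Rj_*\mathbb Z_U[2]$. Verdier duality on $U$ satisfies $\mathbb D(\mathbb Z_U[2])=\mathbb Z_U[2]$, because $U$ is an oriented $4$-manifold. Hence $\mathbb D K=j_!\mathbb Z_U[2]$. The stalk of $i^*$ of the cone of $j_!\to Rj_*$ computes $H^\bullet(L)$. Under this identification, the duality $\mathbb D$ restricted to the point-supported pieces is Poincaré duality on $L$: the local duality $i^!\mathbb D=\mathbb D i^*$ corresponds, via Lemma~\ref{lem:local-link-cohomology}, to the duality between $H^\bullet(L)$ and $H^{3-\bullet}(L)$.

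The key step is then the following. The inclusion $E\hookrightarrow H^0(i^*K)=H^2(L,\mathbb Z)$ from the proof of Proposition~\ref{prop:E-link-model} must intertwine $\phi$ with the map $H^2(L)_{\tors}\to\Ext^1(H^2(L)_{\tors},\mathbb Z)$ induced by Poincaré duality. That map is precisely the linking form. To check this, I will track the adjunction morphisms $A\to i_*i^*A$ and $i_*i^!B\to B$ into $K$ and apply $\mathbb D$.

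The main obstacle is that, in general, the identification of $E$ with $H^2(L,\mathbb Z)_{\tors}$ is partly imported from Jung--Saito. Naturality of these maps with respect to $\mathbb D$ therefore has to be verified, not merely asserted. If the direct tracking becomes unwieldy, I will first prove compatibility in the rational case via the ts-Deligne framework of Proposition~\ref{prop:rational-ts-bridge}, using Friedman's duality theorem, which is explicitly compatible with torsion linking pairings. I will then argue that the comparison is local on the cone of $j_!\to Rj_*$ and hence holds in general. In both cases the answer is determined only up to sign conventions, and I will state (iv) up to sign.
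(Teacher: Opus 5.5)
Items (i)--(iii) are argued exactly as in the paper. For (iv) you take a genuinely different and more substantive route. The paper only places side by side the self-duality of the triangle, the linking duality on $L$, and the identification of Lemma~\ref{lem:torsion-second-link}, and asserts that they match. It never determines which pairing on $E$ the self-duality induces; your argument supplies that content.

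Since $\mathbb D K\cong j_!\mathbb Z_U[2]$, dualizing $j_!\mathbb Z_U[2]\to K\to i_*i^*K\to$ shows that local Verdier duality on $i^*K\cong R\Gamma(L,\mathbb Z)[2]$ is, up to sign, Poincaré duality $R\Hom(R\Gamma(L,\mathbb Z),\mathbb Z)\cong R\Gamma(L,\mathbb Z)[3]$. On torsion this gives $H^2(L,\mathbb Z)_{\tors}\cong\Ext^1(H^2(L,\mathbb Z),\mathbb Z)\cong\Hom(H^2(L,\mathbb Z)_{\tors},\mathbb Q/\mathbb Z)$, which is the linking form. Tracking the adjunction $B\to Rj_*j^*B=K$ through $\mathbb D$ then identifies $\phi$ with it. This costs bookkeeping, but it actually proves (iv).

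For canonicity of $\phi$, which you should state:
\begin{itemize}
\item $u$ is unique, because $\Hom(A,B)\hookrightarrow\Hom(j^*A,j^*B)$: indeed $\Hom(i^*A,i^!B)=0$, since $i^*A\in D^{\le -1}$ and $i^!B\in D^{\ge 1}$. Hence $\mathbb D u=u$.
\item The induced map on cones is unique, because $\Hom(i_*E,B)=\Hom(E,i^!B)=0$.
\end{itemize}

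Two corrections are needed.

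\emph{Degrees.} Your normalization is inconsistent. Write $E^\wedge:=\Hom(E,\mathbb Q/\mathbb Z)$, so that $\mathbb D(i_*E)\cong i_*E^\wedge[-1]$. Dualizing $A\to B\to i_*E[1]\to$ then yields $A\to B\to i_*E^\wedge[-1]\to$, so literal self-duality with the cone in degree $-1$ would force $E=0$. The stalks settle the normalization. Near $0$ one has $A=\tau_{\le -1}K$, the map $A\to B$ is an isomorphism on $H^{-2}$ and $H^{-1}$ at $0$, and $H^0(i^*A)=0$. So the cone is $i_*E$ in degree $0$. The paper's $E[1]$ is off by a shift; compare the remark in Appendix~\ref{app:rational-codim2}, where the cone appears as $i_*E$. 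With this normalization $\mathbb D(i_*E)[1]\cong i_*E^\wedge$, and your $\phi$ is correct.

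\emph{The flagged obstacle and the fallback.} The obstacle you flag is not real, and the fallback should be dropped. Use the triangle $i^!B\to i^*B\to i^*K\to$ together with the dual middle-extension conditions: $H^0(i^*B)$ is torsion, $H^0(i^!B)=0$, and $H^1(i^!B)$ is torsion-free. These give an exact sequence $0\to E\to H^2(L,\mathbb Z)\to H^1(i^!B)$. Hence $E=H^2(L,\mathbb Z)_{\tors}$ canonically via the adjunction map, with no Jung--Saito input, and this is exactly the inclusion you need to track. By contrast, passing from rational germs to arbitrary ones by ``locality'' is not a valid step. Rationality is a property of the germ itself, not something removed by shrinking the neighbourhood.
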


\begin{proof}
Assertion (i) is Proposition~\ref{prop:E-link-model}.  Assertion (ii) follows immediately
from Lemma~\ref{lem:torsion-second-link}, since $H^2(L,\mathbb Z)_{\tors}$ is finite.
Assertion (iii) is tautological, since any torsion abelian group vanishes after tensoring
with $\mathbb Q$.

For (iv), the distinguished triangle
\[
{}^pIC_X\mathbb Z\to {}^p_+IC_X\mathbb Z\to E[1]\to
\]
is self-dual by Proposition~\ref{prop:self-dual-triangle}.  On the topological side, the
torsion subgroup of $H^2(L,\mathbb Z)$ is canonically dual to the torsion subgroup of
$H_1(L,\mathbb Z)$ via the linking form on the compact oriented $3$-manifold $L$.  Since
Lemma~\ref{lem:torsion-second-link} identifies these torsion groups canonically, the duality
on $E$ matches the standard linking duality on the link.
\end{proof}

\begin{remark}
The point of Proposition~\ref{prop:E-finite-topological} is conceptual as much as
computational.  It shows that the group $E$ is not merely point-supported and finite: it is
precisely the torsion in a classical topological invariant of the singularity, namely the
second cohomology of the link.
\end{remark}

\subsection{Canonical local-topological model for \texorpdfstring{$E$}{E}}

We can now state the main result of this section in its cleanest form.

\begin{theorem}\label{thm:topological-model-E}
Let $(X,0)$ be a germ of a normal complex analytic surface, and let $L$ be the link of the
singularity.  Then the local obstruction group
\[
E:=H^0({}^p_+IC_X\mathbb Z)_0
\]
admits the canonical local-topological realization
\[
E\cong H^2(L,\mathbb Z)_{\tors}.
\]
Equivalently,
\[
E\cong H^3_{\{0\}}(X_{\varepsilon},\mathbb Z)_{\tors}.
\]
In particular, $E$ depends only on the oriented homeomorphism type of the link.
\end{theorem}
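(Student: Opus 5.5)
The theorem packages results already established in this section, so the plan is mainly to assemble them and add the final claim about dependence on the link.

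\emph{Step 1: the first isomorphism.} The isomorphism $E\cong H^2(L,\mathbb Z)_{\tors}$ is Proposition~\ref{prop:E-link-model}, which is also recorded as part (i) of Proposition~\ref{prop:E-finite-topological}. I will invoke it directly.

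\emph{Step 2: canonicity.} I will check that the isomorphism is canonical by tracing its construction. The stalk $H^0(i_0^*Rj_*\mathbb Z_U[2])\cong H^2(L,\mathbb Z)$ comes from Lemma~\ref{lem:stalk-Rj-star}. The adjunction $B\to Rj_*j^*B = K$ gives a natural map $E=H^0(i_0^*B)\to H^0(i_0^*K)\cong H^2(L,\mathbb Z)$. The codimension-two comparison in Proposition~\ref{prop:self-dual-triangle} shows that this map is injective with image exactly the torsion subgroup. To see this, note the following:
\begin{itemize}
\item the degree-zero stalk of $A$ vanishes;
\item the triangle $A\to B\to E[1]$ is self-dual;
\item $E$ is finite, so its image lands in torsion;
\item surjectivity onto the torsion follows because the dual condition keeps all of the torsion in the critical degree.
\end{itemize}
Since every map involved is natural, the identification is canonical.

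\emph{Step 3: the second form.} The equivalent form $E\cong H^3_{\{0\}}(X_\varepsilon,\mathbb Z)_{\tors}$ follows by applying Lemma~\ref{lem:local-link-cohomology} with $k=3$. Because $\widetilde H^2=H^2$, this gives $H^3_{\{0\}}(X_\varepsilon,\mathbb Z)\cong H^2(L,\mathbb Z)$. Taking torsion subgroups, which is functorial, yields the claim.

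\emph{Step 4: dependence on the link.} $H^2(L,\mathbb Z)_{\tors}$ is a homotopy invariant of $L$. By Lemma~\ref{lem:torsion-second-link} it is canonically isomorphic to $\Ext^1(H_1(L,\mathbb Z),\mathbb Z)\cong H_1(L,\mathbb Z)_{\tors}$. Hence it depends only on the homeomorphism type of $L$, and a fortiori on the oriented homeomorphism type. Orientation matters only for the compatible linking pairing of Proposition~\ref{prop:E-finite-topological}(iv).

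The main obstacle is Step 2: checking that the map $E\to H^2(L,\mathbb Z)$ is natural and has image exactly the torsion subgroup, rather than merely an abstract isomorphism. I would take this entirely from Proposition~\ref{prop:E-link-model}, which itself rests on the Jung--Saito concentration statement, and not attempt to reprove it here.
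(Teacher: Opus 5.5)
Your Steps~1, 3 and~4 are exactly the paper's proof. The paper simply observes that both displayed isomorphisms are those of Proposition~\ref{prop:E-link-model}, whose second half is Lemma~\ref{lem:local-link-cohomology} at $k=3$ as in your Step~3, and that torsion in link cohomology is a topological invariant. Your remark that orientation is not needed for the group itself is a harmless sharpening. Since you ultimately defer Step~2 to Proposition~\ref{prop:E-link-model}, your argument stands exactly as well as the paper's.

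The bullet-point justification in Step~2, however, does not work as written and should be replaced. Two of your bullets are incompatible: applying $i_0^*$ to $A\to B\to i_*E[1]\xrightarrow{+1}$ yields the exact segment
\[
H^0(i_0^*A)\longrightarrow H^0(i_0^*B)\longrightarrow H^0(E[1])=0,
\]
so $H^0(i_0^*A)=0$ would force $E=H^0(i_0^*B)=0$ for every singularity. Your first bullet is correct; the fault lies in the $E[1]$ normalization. The consistent normalization is $\operatorname{Cone}(u)\cong i_*E$ in degree $0$, which is the shift discrepancy acknowledged in Appendix~\ref{app:rational-codim2}.

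Moreover, self-duality plays no role, and your last bullet needs an actual mechanism. That mechanism is the intermediate-extension characterization of $B={}^p_+j_{!*}\mathbb Z_U[2]$, namely $i_0^*B\in{}^p_+D^{\le -1}$ and $i_0^!B\in{}^p_+D^{\ge 1}$. Concretely, $H^k(i_0^*B)=0$ for $k>0$ with $H^0(i_0^*B)$ torsion, and $H^k(i_0^!B)=0$ for $k<1$ with $H^1(i_0^!B)$ torsion-free. These conditions are one degree stronger than the heart conditions of Lemma~\ref{lem:point-stratum-bounds}. Feeding them into the long exact sequence of $i_0^!B\to i_0^*B\to i_0^*K\xrightarrow{+1}$ gives
\[
0=H^0(i_0^!B)\longrightarrow E\longrightarrow H^2(L,\mathbb Z)\longrightarrow H^1(i_0^!B).
\]
Hence your adjunction map $E\to H^2(L,\mathbb Z)$ is injective, with torsion image and torsion-free cokernel, so its image is exactly $H^2(L,\mathbb Z)_{\tors}$. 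This makes the canonicity claim honest and needs neither the Jung--Saito concentration statement nor self-duality.
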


\begin{proof}
The two displayed identifications are exactly those of
Proposition~\ref{prop:E-link-model}.  The final assertion follows because the cohomology of
the link, together with its torsion subgroup, depends only on the oriented homeomorphism
type of $L$.
\end{proof}

\begin{remark}\label{rem:topological-pivot}
Theorem~\ref{thm:topological-model-E} is the pivot of the paper.  It replaces the abstract
point-supported correction from Section~2 by the concrete topological group
$H^2(L,\mathbb Z)_{\tors}$.  In the next section we will identify this same group with the
discriminant group of the exceptional lattice of a resolution.  Later, in the isolated
hypersurface case, we will identify it with the finite integral defect of the variation map
$T-\mathrm{id}$.
\end{remark}

\section{Resolution lattices and discriminant groups}

In this section we compute the local obstruction group $E$ by means of the minimal
resolution of the singularity.  The point is to identify the topological model obtained in
Section~3,
\[
E \cong H^2(L,\mathbb Z)_{\tors},
\]
with the discriminant group of the exceptional lattice of the minimal resolution.  This is
the first genuinely geometric realization of the correction term.

Let $(X,0)$ be a germ of a normal complex analytic surface, and let
\[
\pi:\widetilde X\to X
\]
be its minimal resolution.  Write
\[
E_{\pi}:=\pi^{-1}(0)=\bigcup_{i=1}^r E_i
\]
for the decomposition of the exceptional divisor into irreducible components.

\subsection{Minimal resolution and the exceptional lattice}

The irreducible exceptional curves $E_1,\dots,E_r$ determine a free abelian group
\[
\Lambda:=\bigoplus_{i=1}^r \mathbb Z[E_i].
\]
Since $\widetilde X$ is smooth, the intersection product of divisors induces an integral
symmetric bilinear form
\[
(\ ,\ ):\Lambda\times \Lambda\to \mathbb Z,
\qquad
([E_i],[E_j])\longmapsto E_i\cdot E_j.
\]
We call $(\Lambda,(\ ,\ ))$ the exceptional lattice of the resolution.

Let
\[
M=(E_i\cdot E_j)_{1\le i,j\le r}
\]
be the corresponding intersection matrix.  The essential input is the following theorem of
Mumford: for a resolution of a normal surface singularity, the intersection matrix of the
exceptional curves is negative definite; see \cite{Mu61}.  Concretely, this means that for
every nonzero element
\[
\lambda=\sum_{i=1}^r a_i[E_i]\in \Lambda\otimes_{\mathbb Z}\mathbb R
\]
one has
\[
(\lambda,\lambda)<0.
\]
In particular, the form is nondegenerate over $\mathbb Q$, and the natural homomorphism
\[
\Lambda\longrightarrow \Lambda^\vee:=\Hom_{\mathbb Z}(\Lambda,\mathbb Z),
\qquad
\lambda\longmapsto (\lambda,-),
\]
is injective with finite cokernel.

The negative definiteness will be used in two ways.  First, it implies that the quotient
$\Lambda^\vee/\Lambda$ is finite.  Second, it shows that the local correction term $E$ is
measuring the failure of the exceptional intersection form to be unimodular.

We now relate the abstract lattice $\Lambda$ to the topology of a tubular neighborhood of
the exceptional divisor.

\begin{lemma}\label{lem:H2N-is-lattice}
Let $N:=N(E_{\pi})$ be a sufficiently small tubular neighborhood of the exceptional divisor
$E_{\pi}$ in $\widetilde X$. Then:
\begin{enumerate}
\item[(i)] $N$ deformation retracts onto $E_{\pi}$;
\item[(ii)] $H_2(N,\mathbb Z)$ is a free abelian group with basis the fundamental classes
$[E_1],\dots,[E_r]$;
\item[(iii)] under the identification in \emph{(ii)}, the intersection pairing on
$H_2(N,\mathbb Z)$ is represented by the matrix $M$.
\end{enumerate}
\end{lemma}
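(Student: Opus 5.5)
The plan has three parts, one per assertion of Lemma~\ref{lem:H2N-is-lattice}. Throughout, we use the standard structure of the exceptional divisor of the minimal resolution: after at most finitely many further blow-ups it is a normal crossing divisor, and the topological statements are insensitive to this modification. We therefore describe the argument for a compact connected curve $E_\pi=\bigcup_i E_i$ in the smooth surface $\widetilde X$. All that is needed is that $E_\pi$ is a compact analytic subset.

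For (i), we triangulate the pair $(\widetilde X,E_\pi)$ as a subcomplex, using Łojasiewicz's triangulation of semianalytic sets. A regular neighborhood of $E_\pi$ then deformation retracts onto it. Equivalently, we may take $N=\rho^{-1}([0,\delta))$ for a real-analytic function $\rho\ge 0$ vanishing exactly on $E_\pi$, such as $\rho=\|\pi\|^2$ composed with a Milnor-ball distance. For $\delta$ small, the gradient flow of $\rho$ retracts $N$ onto $E_\pi$, by the curve-selection lemma and the absence of critical values of $\rho$ in $(0,\delta)$. This is the same mechanism that gives the conic structure used in Section~3, and it also identifies $\partial N$ with $L$.

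For (ii), by (i) we have $H_2(N,\mathbb Z)\cong H_2(E_\pi,\mathbb Z)$, so it suffices to compute the homology of a compact complex curve with irreducible components $E_i$. Let $\nu:\bigsqcup_i \widetilde E_i\to E_\pi$ be the normalization. Each component's normalization $\widetilde E_i$ is a compact Riemann surface with $H_2(\widetilde E_i)\cong\mathbb Z$, and $\nu$ is an isomorphism away from the finite singular set $\Sigma$. We then compare $\nu$ on the pairs $\bigl(\bigsqcup\widetilde E_i,\nu^{-1}\Sigma\bigr)\to (E_\pi,\Sigma)$: the relative homologies agree by excision, as both equal the homology of the same quotient space. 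Since $\Sigma$ and $\nu^{-1}\Sigma$ are finite and have no $H_2$ or $H_1$, the long exact sequences give
\[
H_2(E_\pi,\mathbb Z)\cong H_2(E_\pi,\Sigma)\cong \bigoplus_i H_2(\widetilde E_i,\Sigma_i)\cong\bigoplus_i \mathbb Z[\widetilde E_i].
\]
Here $\Sigma_i$ denotes the points of $\nu^{-1}\Sigma$ lying on $\widetilde E_i$. It follows that $H_2(E_\pi)$ is free with basis the pushforwards $\nu_*[\widetilde E_i]=[E_i]$, which are the fundamental classes of the components.

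For (iii), the intersection pairing on $H_2(N)$ is defined through $H_2(N)\to H_2(N,\partial N)\cong H^2(N)$ by Lefschetz duality, followed by evaluation. For two classes represented by closed oriented real surfaces meeting transversally, the pairing is the signed count of intersection points. For $i\neq j$, the curves $E_i$ and $E_j$ are complex, so every local intersection contribution is positive and equals the local algebraic intersection multiplicity. The topological pairing therefore equals $E_i\cdot E_j$. For $i=j$, we use the standard identity $[E_i]\cdot[E_i]=\deg(\mathcal O(E_i)|_{E_i})$, obtained by perturbing $E_i$ to a smooth section of its normal bundle and comparing with the first Chern class of $\mathcal O_{\widetilde X}(E_i)$. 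The right-hand side is by definition the algebraic self-intersection $E_i^2$.

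I expect the main obstacle to be the diagonal case in (iii) when $E_i$ is singular: a tubular-neighborhood normal bundle is then not available. I would first handle it via $c_1(\mathcal O(E_i))$: the Poincaré dual of $[E_i]$ is $c_1(\mathcal O_{\widetilde X}(E_i))$, so $[E_i]\cdot[E_i]=\langle c_1(\mathcal O(E_i)),[E_i]\rangle=\deg\nu^*\mathcal O(E_i)=E_i^2$. This argument avoids transversality entirely and also covers the off-diagonal terms uniformly.
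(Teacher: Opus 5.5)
Your proposal is correct, and it takes a genuinely different route from the paper in (ii) and (iii).

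\textbf{Part (ii).} The paper assumes $E_\pi$ has simple normal crossings. It inducts on the number of components via Mayer--Vietoris for $E_\pi=Y\cup E_r$, using only that $Y\cap E_r$ is finite, and it simply asserts the base case $H_2(E_1,\mathbb Z)\cong\mathbb Z$. You instead compare $E_\pi$ with its normalization through the pairs $(\bigsqcup_i\widetilde E_i,\nu^{-1}\Sigma)\to(E_\pi,\Sigma)$ and the homeomorphism of the quotient spaces. Your version gains real generality. The exceptional divisor of the \emph{minimal} resolution need not be normal crossing and may have singular components; for $x^2+y^3+z^7$, one of the paper's own examples, it is a single cuspidal rational $(-1)$-curve. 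For a singular component, the paper's base case is exactly what your normalization argument proves. Conversely, the paper's induction is more elementary, and it goes through verbatim once that base case is granted, since normal crossings are never actually used.

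\textbf{Part (iii).} The paper only says the matrix is $M$ ``by construction.'' You identify the Lefschetz dual of $[E_i]$ with $c_1(\mathcal O_{\widetilde X}(E_i))|_N$ and then compute $\langle c_1(\mathcal O(E_i)),\nu_*[\widetilde E_j]\rangle=\deg \nu^*\bigl(\mathcal O(E_i)|_{E_j}\bigr)=E_i\cdot E_j$. This actually proves the comparison, uniformly for $i=j$ and $i\neq j$ and without any transversality. It is also exactly the form of duality the paper uses afterwards in Lemma~\ref{lem:PL-identification}.

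\textbf{Two minor corrections.} First, delete the opening claim that the statements are insensitive to further blow-ups. Blowing up adds a class to $H_2(N,\mathbb Z)$ and changes $M$; fortunately you never use this claim. Second, in (i), the gradient flow of $\rho$ on its own only retracts $N$ onto smaller sublevel sets. Retracting onto $E_\pi$ itself requires the triangulation/regular-neighborhood argument (or the \L{}ojasiewicz gradient inequality), which you do supply as your primary route.
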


\begin{proof}
Assertion (i) is standard for a sufficiently small tubular neighborhood of a divisor in a
smooth complex surface.

For (ii), by (i) it is enough to compute $H_2(E_{\pi},\mathbb Z)$.  The exceptional divisor
$E_{\pi}$ is a connected reduced curve with simple normal crossings, whose irreducible
components are the compact complex curves $E_i$.  We prove that
\[
H_2(E_{\pi},\mathbb Z)\cong \bigoplus_{i=1}^r \mathbb Z[E_i]
\]
by induction on the number of irreducible components.  If $r=1$, then $E_{\pi}=E_1$ is a
compact connected complex curve, hence
\[
H_2(E_{\pi},\mathbb Z)\cong \mathbb Z[E_1].
\]

Assume now that the assertion is proved for a union of $r-1$ components, and write
\[
E_{\pi}=Y\cup E_r,
\qquad
Y:=\bigcup_{i=1}^{r-1}E_i.
\]
Since the exceptional divisor has only normal crossings, the intersection $Y\cap E_r$ is a
finite set of points.  The Mayer--Vietoris sequence for the decomposition
$E_{\pi}=Y\cup E_r$ yields
\[
H_2(Y\cap E_r,\mathbb Z)\to H_2(Y,\mathbb Z)\oplus H_2(E_r,\mathbb Z)
\to H_2(E_{\pi},\mathbb Z)\to H_1(Y\cap E_r,\mathbb Z).
\]
Because $Y\cap E_r$ is finite, one has
\[
H_2(Y\cap E_r,\mathbb Z)=0,
\qquad
H_1(Y\cap E_r,\mathbb Z)=0.
\]
Hence
\[
H_2(E_{\pi},\mathbb Z)\cong H_2(Y,\mathbb Z)\oplus H_2(E_r,\mathbb Z).
\]
By the induction hypothesis,
\[
H_2(Y,\mathbb Z)\cong \bigoplus_{i=1}^{r-1}\mathbb Z[E_i],
\qquad
H_2(E_r,\mathbb Z)\cong \mathbb Z[E_r],
\]
and therefore
\[
H_2(E_{\pi},\mathbb Z)\cong \bigoplus_{i=1}^{r}\mathbb Z[E_i].
\]
This proves (ii).

Finally, (iii) follows because the geometric intersection pairing of divisors on
$\widetilde X$ restricts to the intersection pairing of their fundamental classes in
$H_2(N,\mathbb Z)$, and by construction its matrix in the basis
$[E_1],\dots,[E_r]$ is exactly $M$.
\end{proof}

\subsection{Dual lattice and discriminant group}

The dual lattice of $\Lambda$ is
\[
\Lambda^\vee:=\Hom_{\mathbb Z}(\Lambda,\mathbb Z).
\]
Using the intersection form, we identify $\Lambda$ with a sublattice of $\Lambda^\vee$ via
\[
\iota:\Lambda\hookrightarrow \Lambda^\vee,
\qquad
\lambda\mapsto (\lambda,-).
\]
The discriminant group of the exceptional lattice is defined to be
\[
A_\Lambda:=\Lambda^\vee/\Lambda.
\]

Because the form is nondegenerate over $\mathbb Q$, the quotient $A_\Lambda$ is finite.
Its order is computed by the determinant of the Gram matrix.

\begin{lemma}\label{lem:order-discriminant}
Let $M$ be the matrix of the intersection form in the basis $[E_1],\dots,[E_r]$. Then
\[
|A_\Lambda|=|\det(M)|.
\]
\end{lemma}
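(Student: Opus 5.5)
Choose the basis $[E_1],\dots,[E_r]$ of $\Lambda$ and the dual basis $e_1^*,\dots,e_r^*$ of $\Lambda^\vee=\Hom_{\mathbb Z}(\Lambda,\mathbb Z)$, given by $e_i^*([E_j])=\delta_{ij}$. In these bases the embedding $\iota:\Lambda\to\Lambda^\vee$, $\lambda\mapsto(\lambda,-)$, is represented by the Gram matrix $M$. Indeed, $\iota([E_j])=\sum_i (E_j\cdot E_i)\,e_i^*$, so the $j$-th column of the matrix of $\iota$ is the $j$-th column of $M$, using the symmetry of $M$. Under the identifications $\Lambda\cong\mathbb Z^r$ and $\Lambda^\vee\cong\mathbb Z^r$ we therefore get
\[
A_\Lambda=\Lambda^\vee/\iota(\Lambda)\cong \mathbb Z^r/M\mathbb Z^r=\coker(M:\mathbb Z^r\to\mathbb Z^r).
\]
This reduces the lemma to a statement about integer matrices.

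By Mumford's negative definiteness theorem \cite{Mu61}, recalled above, $M$ is nondegenerate over $\mathbb Q$, so $\det(M)\neq 0$. Next apply the Smith normal form. There are matrices $P,Q\in GL_r(\mathbb Z)$ with $PMQ=\mathrm{diag}(d_1,\dots,d_r)$ and each $d_i\neq 0$, because $\det(M)\ne0$. Since $P$ and $Q$ are automorphisms of $\mathbb Z^r$, they induce an isomorphism
\[
\coker(M)\cong\bigoplus_{i=1}^r\mathbb Z/d_i\mathbb Z.
\]
This group is finite of order $\prod_i|d_i|$. Finally, $\det P$ and $\det Q$ lie in $\{\pm1\}$, so $\prod_i|d_i|=|\det(PMQ)|=|\det(M)|$, which gives $|A_\Lambda|=|\det(M)|$.

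An alternative route avoids Smith normal form. One observes that $[\mathbb Z^r:M\mathbb Z^r]$ equals the ratio of the covolumes of the lattices $M\mathbb Z^r$ and $\mathbb Z^r$ in $\mathbb R^r$, and this ratio is $|\det M|$. The Smith normal form argument is cleaner, however, and I would use it.

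No step here is a genuine obstacle. The only point needing care is checking that the matrix of $\iota$ in the chosen bases is really $M$ and not its transpose or inverse; symmetry of the intersection form settles this. The nonvanishing of $\det(M)$, which is needed for finiteness, is supplied by Mumford's theorem as already recorded in the text.
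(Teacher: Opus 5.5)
Your proof is correct and follows the same route as the paper: you represent $\iota$ by the Gram matrix $M$ in the basis $[E_i]$ and its dual basis, so that $A_\Lambda\cong\coker(M)$, and then use nondegeneracy (Mumford) together with Smith normal form to get $|\coker(M)|=|\det(M)|$. Your check via symmetry that the matrix of $\iota$ is $M$, and your explicit diagonalization $PMQ=\mathrm{diag}(d_1,\dots,d_r)$, just fill in details the paper leaves to Appendix~A.
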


\begin{proof}
In the chosen basis, the map $\iota:\Lambda\to\Lambda^\vee$ is represented by the integral
matrix $M$.  Therefore
\[
A_\Lambda\cong \coker(M:\mathbb Z^r\to \mathbb Z^r).
\]
Since $M$ is invertible over $\mathbb Q$, this cokernel is finite, and its order is the
absolute value of the determinant.  This is the standard Smith normal form computation; see
Appendix~A.
\end{proof}

Thus the discriminant group is the most natural finite group attached to the exceptional
lattice.  The content of the next subsection is that this is exactly the same finite group
as the topological obstruction of Section~3.

\subsection{Comparison with the topological model}

Let $N:=N(E_{\pi})$ be a sufficiently small tubular neighborhood of $E_{\pi}$ in
$\widetilde X$, and let
\[
L:=\partial N.
\]
As recalled in Section~3, $L$ is canonically diffeomorphic to the link of the singularity.

The comparison between the boundary topology and the lattice discriminant is governed by the
homology long exact sequence of the pair $(N,L)$ together with Poincar\'e--Lefschetz
duality.  We begin by identifying the relative homology group with the dual lattice.

\begin{lemma}\label{lem:PL-identification}
There is a canonical isomorphism
\[
H_2(N,L;\mathbb Z)/\tors \cong \Lambda^\vee.
\]
Under this identification, the natural map
\[
H_2(N,\mathbb Z)\longrightarrow H_2(N,L;\mathbb Z)
\]
induces precisely the lattice homomorphism
\[
\Lambda\longrightarrow \Lambda^\vee,
\qquad
\lambda\longmapsto (\lambda,-).
\]
\end{lemma}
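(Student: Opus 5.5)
Let me sketch the argument. I would use Poincaré--Lefschetz duality on the compact oriented $4$-manifold with boundary $(N,L)$, together with the universal coefficient theorem and Lemma~\ref{lem:H2N-is-lattice}. First, Lefschetz duality gives a canonical isomorphism
\[
H_2(N,L;\mathbb Z)\cong H^2(N,\mathbb Z),
\]
obtained by capping with the fundamental class $[N,L]$. By Lemma~\ref{lem:H2N-is-lattice}(i), $N$ deformation retracts onto $E_\pi$. Hence $H_1(N,\mathbb Z)\cong H_1(E_\pi,\mathbb Z)$, which is finitely generated. The universal coefficient theorem then gives a short exact sequence
\[
0\to \Ext^1(H_1(N,\mathbb Z),\mathbb Z)\to H^2(N,\mathbb Z)\to \Hom(H_2(N,\mathbb Z),\mathbb Z)\to 0 .
\]
The left term is finite and the right term is free. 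So the left term is exactly the torsion subgroup of $H^2(N,\mathbb Z)$, and we get
\[
H_2(N,L;\mathbb Z)/\tors\cong \Hom(H_2(N,\mathbb Z),\mathbb Z)=\Hom(\Lambda,\mathbb Z)=\Lambda^\vee ,
\]
using Lemma~\ref{lem:H2N-is-lattice}(ii). I would note in passing that the torsion is in fact zero, since $H_1(E_\pi)$ is free as $E_\pi$ is a union of curves. The statement does not need this, however.

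The second step is to identify the composite
\[
H_2(N)\to H_2(N,L)\cong H^2(N)\to \Hom(H_2(N),\mathbb Z).
\]
Unwinding the definitions, the class $\lambda\in H_2(N)$ goes to the functional
\[
\mu\longmapsto \langle \mathrm{PD}^{-1}(j_*\lambda),\mu\rangle .
\]
This is the definition of the homological intersection number $\lambda\cdot\mu$ on a compact oriented manifold with boundary. Evaluating this on the basis $[E_i]$ and invoking Lemma~\ref{lem:H2N-is-lattice}(iii), the functional is $(\lambda,-)$, with Gram matrix $M$.

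I expect the main obstacle to be this second step, namely checking the compatibility of sign and orientation conventions between the cap-product duality and the geometric intersection of divisors. I would handle it as follows. First, represent $[E_i]$ and $[E_j]$ by transverse smooth cycles; for the self-intersection case, use a perturbed section of the normal bundle. Then use the standard fact that the cup product of Thom classes is Poincaré dual to the transverse intersection. Here the complex orientations make all local signs $+1$. Naturality of the cap product with respect to the inclusion $j:(N,\emptyset)\to(N,L)$ then gives the asserted description of the induced map.
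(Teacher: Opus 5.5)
Your proposal is correct and takes the same route as the paper. You use Poincar\'e--Lefschetz duality $H_2(N,L;\mathbb Z)\cong H^2(N,\mathbb Z)$, then the universal coefficient theorem to identify the torsion-free quotient with $\Hom(H_2(N,\mathbb Z),\mathbb Z)=\Lambda^\vee$, and finally read off the composite $H_2(N)\to H_2(N,L)\cong H^2(N)\to\Hom(H_2(N),\mathbb Z)$ as the intersection pairing with Gram matrix $M$. Working with the short exact sequence instead of the paper's non-canonical direct-sum splitting, and checking the transversality and complex-orientation signs explicitly, are just more careful versions of the paper's steps.
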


\begin{proof}
Since $N$ is a compact oriented real $4$-manifold with boundary $L$, Poincar\'e--Lefschetz
duality gives
\[
H_2(N,L;\mathbb Z)\cong H^2(N,\mathbb Z).
\]
By Lemma~\ref{lem:H2N-is-lattice}, the group $H_2(N,\mathbb Z)$ is free and canonically
identified with $\Lambda$.  Because $H_2(N,\mathbb Z)$ is free, the universal coefficient
theorem yields
\[
H^2(N,\mathbb Z)
\cong
\Hom(H_2(N,\mathbb Z),\mathbb Z)\oplus \Ext^1(H_1(N,\mathbb Z),\mathbb Z).
\]
In particular,
\[
H^2(N,\mathbb Z)/\tors
\cong
\Hom(H_2(N,\mathbb Z),\mathbb Z)
\cong
\Lambda^\vee.
\]
Combining this with Poincar\'e--Lefschetz duality, we obtain
\[
H_2(N,L;\mathbb Z)/\tors \cong \Lambda^\vee.
\]

It remains to identify the map
\[
H_2(N,\mathbb Z)\to H_2(N,L;\mathbb Z)
\]
under these identifications.  Let $[E_i]\in H_2(N,\mathbb Z)$ be the class of an
exceptional curve.  Under Poincar\'e--Lefschetz duality, its image in $H_2(N,L;\mathbb Z)$
corresponds to the cohomology class in $H^2(N,\mathbb Z)$ which evaluates on a class
$[E_j]\in H_2(N,\mathbb Z)$ by cap product, hence by geometric intersection with $[E_i]$.
Therefore the image of $[E_i]$ is the functional
\[
[E_j]\longmapsto E_i\cdot E_j.
\]
Thus, in the basis $[E_1],\dots,[E_r]$, the map is represented by the matrix $M$, and this
is exactly the lattice homomorphism
\[
\Lambda\to\Lambda^\vee,
\qquad
\lambda\mapsto (\lambda,-).
\]
\end{proof}

The next step is the key one.  We identify the torsion in the boundary with the finite
cokernel of the lattice map $\Lambda\to\Lambda^\vee$.

\begin{lemma}\label{lem:H1N-free}
The group $H_1(N,\mathbb Z)$ is torsion-free.
\end{lemma}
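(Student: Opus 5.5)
Since $N$ deformation retracts onto $E_\pi$ by Lemma~\ref{lem:H2N-is-lattice}(i), we have $H_1(N,\mathbb Z)\cong H_1(E_\pi,\mathbb Z)$. So the plan is to show that $H_1$ of a connected compact curve configuration is torsion-free. We do this by describing $E_\pi$ up to homotopy as a union of its components glued along finitely many points.

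First, each irreducible component $E_i$ is a compact connected complex curve. On the minimal resolution it need not be smooth, so we handle this in one of two ways. One option is to pass to a good resolution (obtained by further blowing up), which leaves $\partial N\cong L$ unchanged and where every component is smooth. The other is to use that a compact irreducible curve is homotopy equivalent to its normalization $\widetilde E_i$ with finitely many points identified. In the second description, identifying points attaches wedge summands of circles up to homotopy. Either way, $H_1(E_i,\mathbb Z)$ is a direct sum of $H_1(\widetilde E_i,\mathbb Z)\cong\mathbb Z^{2g_i}$ and a free group, hence free.

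Next, we argue by induction on $r$ via Mayer--Vietoris, exactly as in the proof of Lemma~\ref{lem:H2N-is-lattice}(ii). Write $E_\pi=Y\cup E_r$ with $Y\cap E_r$ a finite set of $m$ points. The relevant segment of the sequence is
\[
0\to H_1(Y)\oplus H_1(E_r)\to H_1(E_\pi)\to \widetilde H_0(Y\cap E_r)\to \widetilde H_0(Y)\oplus\widetilde H_0(E_r).
\]
Here the first map is injective because $H_1(Y\cap E_r)=0$. The kernel of the map on reduced $H_0$ is free, being a subgroup of $\widetilde H_0$ of a finite set, which is $\mathbb Z^{m-1}$. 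Hence $H_1(E_\pi)$ is an extension of a free group by a group that is free by the induction hypothesis and the previous step. Such an extension splits, so $H_1(E_\pi,\mathbb Z)$ is free, and in particular torsion-free.

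To avoid worrying about how the decomposition $Y\cup E_r$ is ordered, we could instead run the argument globally. Up to homotopy, $E_\pi$ is obtained from the disjoint union $\bigsqcup \widetilde E_i$ by attaching $1$-cells, one for each extra identification of points and each loop in the dual graph. This gives $H_1\cong\bigoplus_i\mathbb Z^{2g_i}\oplus\mathbb Z^{b_1(\Gamma)+\delta}$, which is visibly free.

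The main obstacle is justifying the homotopy models near intersection and singular points. For Mayer--Vietoris we need open neighborhoods that deformation retract, which holds because $E_\pi$ is triangulable as a compact analytic set. We also need the fact that a compact curve is its normalization with finitely many points glued, up to homotopy. The cleanest route is to reduce to a good resolution, where the components are smooth with normal crossings, and then the argument above goes through verbatim.
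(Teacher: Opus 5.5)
Your proof is correct and is essentially the paper's argument: retract $N$ onto $E_\pi$, run a Mayer--Vietoris induction on the number of components, and use that an extension of free abelian groups splits. Your normalization step is a genuine improvement, because the paper's base case simply asserts that each $E_i$ is a compact Riemann surface, which can fail on the minimal resolution---for $x^2+y^3+z^7$ the minimal exceptional divisor is a single rational curve with one cusp and self-intersection $-1$.

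If you take the good-resolution shortcut instead, what must be preserved is $H_1(N)$ itself, not just $\partial N\cong L$, since different fillings of the same $L$ can have different $H_1$. It is preserved because each point blow-up replaces $N$ by $N\#\overline{\mathbb{CP}}^2$, whose first homology is the same as that of $N$.
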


\begin{proof}
Since $N$ deformation retracts onto $E_{\pi}$, it is enough to prove that $H_1(E_{\pi},\mathbb Z)$
is torsion-free.  We again argue by induction on the number of irreducible components of
$E_{\pi}$.

If $E_{\pi}=E_1$ is irreducible, then $E_1$ is a compact connected Riemann surface, and
$H_1(E_1,\mathbb Z)$ is free abelian.

Assume now that the statement is known for a union of $r-1$ components, and write
\[
E_{\pi}=Y\cup E_r,
\qquad
Y:=\bigcup_{i=1}^{r-1}E_i.
\]
As above, $Y\cap E_r$ is a finite set of points.  The Mayer--Vietoris sequence gives
\[
H_1(Y\cap E_r,\mathbb Z)\to H_1(Y,\mathbb Z)\oplus H_1(E_r,\mathbb Z)\to
H_1(E_{\pi},\mathbb Z)\to H_0(Y\cap E_r,\mathbb Z).
\]
Since $Y\cap E_r$ is finite, one has
\[
H_1(Y\cap E_r,\mathbb Z)=0,
\]
and $H_0(Y\cap E_r,\mathbb Z)$ is free abelian.  By the induction hypothesis,
$H_1(Y,\mathbb Z)$ is free abelian, and $H_1(E_r,\mathbb Z)$ is free abelian because $E_r$
is a compact Riemann surface.  Thus $H_1(E_{\pi},\mathbb Z)$ fits into an exact sequence
\[
0\to H_1(Y,\mathbb Z)\oplus H_1(E_r,\mathbb Z)\to H_1(E_{\pi},\mathbb Z)\to
\operatorname{Im}\bigl(H_1(E_{\pi},\mathbb Z)\to H_0(Y\cap E_r,\mathbb Z)\bigr)\to 0.
\]
The right-hand term is a subgroup of the free abelian group $H_0(Y\cap E_r,\mathbb Z)$ and
hence is itself free abelian.  Therefore $H_1(E_{\pi},\mathbb Z)$ is an extension of free
abelian groups and is thus free abelian.  This proves the induction step.
\end{proof}

\begin{lemma}\label{lem:torsion-boundary-discriminant}
There is a canonical isomorphism
\[
H_1(L,\mathbb Z)_{\tors}\cong \Lambda^\vee/\Lambda.
\]
Equivalently, using Lemma~\ref{lem:torsion-second-link},
\[
H^2(L,\mathbb Z)_{\tors}\cong \Lambda^\vee/\Lambda.
\]
\end{lemma}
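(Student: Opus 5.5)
We will use the homology long exact sequence of the pair $(N,L)$, in the segment
\[
H_2(N,\mathbb Z)\longrightarrow H_2(N,L;\mathbb Z)\xrightarrow{\ \partial\ } H_1(L,\mathbb Z)\longrightarrow H_1(N,\mathbb Z)\longrightarrow H_1(N,L;\mathbb Z).
\]
First we identify the relative groups. By Poincar\'e--Lefschetz duality, $H_2(N,L;\mathbb Z)\cong H^2(N,\mathbb Z)$ and $H_1(N,L;\mathbb Z)\cong H^3(N,\mathbb Z)$. Since $N$ retracts onto the real $2$-dimensional complex $E_\pi$, we have $H^3(N,\mathbb Z)=0$. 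By Lemma~\ref{lem:H1N-free}, $H_1(N,\mathbb Z)$ is free, so $\Ext^1(H_1(N),\mathbb Z)=0$. Hence $H^2(N,\mathbb Z)\cong\Hom(H_2(N),\mathbb Z)=\Lambda^\vee$ exactly, with no torsion to discard. Lemma~\ref{lem:PL-identification} then says the first map is $\iota:\Lambda\to\Lambda^\vee$, given by the matrix $M$. The sequence therefore becomes
\[
0\longrightarrow \Lambda^\vee/\Lambda \xrightarrow{\ \bar\partial\ } H_1(L,\mathbb Z)\longrightarrow H_1(N,\mathbb Z)\longrightarrow 0 .
\]
Injectivity on the left follows from exactness at $H_2(N,L)$, and surjectivity on the right from $H_1(N,L)=0$.

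Next we extract the torsion. By Lemma~\ref{lem:order-discriminant} and Mumford's negative definiteness, $\Lambda^\vee/\Lambda$ is finite, so its image lies in $H_1(L,\mathbb Z)_{\tors}$. Conversely, $H_1(N,\mathbb Z)$ is torsion-free by Lemma~\ref{lem:H1N-free}, so any torsion element of $H_1(L)$ maps to a torsion element of $H_1(N)$, i.e.\ to $0$. Such an element therefore lies in the image of $\bar\partial$. Thus $\bar\partial$ restricts to an isomorphism $\Lambda^\vee/\Lambda\cong H_1(L,\mathbb Z)_{\tors}$. It is canonical because every map used is natural: the connecting map, Poincar\'e--Lefschetz duality, and the universal coefficient isomorphism. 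The split exact sequence also shows $H_1(L)\cong \Lambda^\vee/\Lambda\oplus H_1(N)$ noncanonically, which gives a consistency check on ranks.

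The second displayed isomorphism then follows by composing with Lemma~\ref{lem:torsion-second-link}, which gives $H^2(L,\mathbb Z)_{\tors}\cong\Ext^1(H_1(L),\mathbb Z)\cong H_1(L,\mathbb Z)_{\tors}$. If we want full canonicity, we can instead argue dually with the cohomology sequence $H^1(N)\to H^1(L)\to H^2(N,L)\to H^2(N)\to H^2(L)\to H^3(N,L)$. Here $H^2(N,L)\cong H_2(N)=\Lambda$, $H^2(N)\cong\Lambda^\vee$, and $H^3(N,L)\cong H_1(N)$ is free. The same torsion argument then gives $H^2(L)_{\tors}\cong\coker(\Lambda\to\Lambda^\vee)$ directly.

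The main obstacle is bookkeeping rather than substance. We must make sure the map $H_2(N)\to H_2(N,L)\cong\Lambda^\vee$ really is $\iota$ with no sign or transpose issue. This holds because $M$ is symmetric, and we take it from Lemma~\ref{lem:PL-identification}. We must also check that $H^2(N)$ has no torsion, so that the ``$/\tors$'' in Lemma~\ref{lem:PL-identification} is harmless. Both checks reduce to Lemma~\ref{lem:H1N-free} and the vanishing $H^3(N)=0$.
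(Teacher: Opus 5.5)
Your proof is correct and follows the paper's argument. You use the homology sequence of the pair $(N,L)$, then Poincar\'e--Lefschetz duality together with $\Ext^1(H_1(N,\mathbb Z),\mathbb Z)=0$ to see that $H_2(N,L;\mathbb Z)\cong\Lambda^\vee$ is torsion-free with $H_2(N)\to H_2(N,L)$ equal to $\iota$, and finally the torsion extraction using that $H_1(N,\mathbb Z)$ is torsion-free; your extra observation $H_1(N,L;\mathbb Z)\cong H^3(N,\mathbb Z)=0$, giving a split short exact sequence, and the dual cohomological variant are valid additions not needed for the statement.
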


\begin{proof}
Consider the exact segment of the homology long exact sequence of the pair $(N,L)$:
\[
H_2(L,\mathbb Z)\xrightarrow{\alpha}
H_2(N,\mathbb Z)\xrightarrow{\beta}
H_2(N,L;\mathbb Z)\xrightarrow{\gamma}
H_1(L,\mathbb Z)\xrightarrow{\delta}
H_1(N,\mathbb Z).
\]
We analyze this sequence in three steps.

\smallskip
\noindent
\emph{Step 1. Identification of the finite cokernel of the lattice map.}
By Lemma~\ref{lem:H2N-is-lattice}, $H_2(N,\mathbb Z)$ is free and canonically identified
with $\Lambda$.  By Lemma~\ref{lem:PL-identification}, the quotient
$H_2(N,L;\mathbb Z)/\tors$ is canonically identified with $\Lambda^\vee$, and under this
identification the map
\[
H_2(N,\mathbb Z)\to H_2(N,L;\mathbb Z)/\tors
\]
is exactly the lattice homomorphism
\[
\Lambda\to\Lambda^\vee.
\]
Therefore the cokernel of this induced map is
\[
\Lambda^\vee/\Lambda.
\]
Since $\Lambda^\vee/\Lambda$ is finite, this cokernel is a finite torsion group.

\smallskip
\noindent
\emph{Step 2. Comparison with the exact sequence of the pair.}
Let
\[
Q:=\coker\bigl(H_2(N,\mathbb Z)\to H_2(N,L;\mathbb Z)\bigr).
\]
By exactness of the pair sequence, $Q$ identifies with the subgroup
\[
\ker\bigl(H_1(L,\mathbb Z)\to H_1(N,\mathbb Z)\bigr)
\subset H_1(L,\mathbb Z).
\]
We now show that $Q$ is finite and canonically isomorphic to $\Lambda^\vee/\Lambda$.

Since $H_2(N,\mathbb Z)$ is free, quotienting $H_2(N,L;\mathbb Z)$ by torsion commutes
with passing to the cokernel.  More precisely, if $T\subset H_2(N,L;\mathbb Z)$ denotes the
torsion subgroup, then the natural surjection
\[
H_2(N,L;\mathbb Z)\to H_2(N,L;\mathbb Z)/T
\]
induces a surjection
\[
Q\to \coker\bigl(H_2(N,\mathbb Z)\to H_2(N,L;\mathbb Z)/T\bigr).
\]
The target is $\Lambda^\vee/\Lambda$ by Step~1.  Since that target is finite, it is enough
to see that the kernel of this surjection vanishes.  But the kernel is precisely the image
in $Q$ of the torsion subgroup $T\subset H_2(N,L;\mathbb Z)$.  Under
Poincar\'e--Lefschetz duality,
\[
T \cong H^2(N,\mathbb Z)_{\tors}\cong \Ext^1(H_1(N,\mathbb Z),\mathbb Z).
\]
By Lemma~\ref{lem:H1N-free}, the group $H_1(N,\mathbb Z)$ is torsion-free, hence
\[
\Ext^1(H_1(N,\mathbb Z),\mathbb Z)=0.
\]
Therefore $T=0$, so $H_2(N,L;\mathbb Z)$ is torsion-free and
\[
Q\cong \Lambda^\vee/\Lambda.
\]

\smallskip
\noindent
\emph{Step 3. Extraction of the torsion of the boundary group.}
By exactness,
\[
Q\cong \ker\delta\subset H_1(L,\mathbb Z).
\]
The group $Q\cong \Lambda^\vee/\Lambda$ is finite, hence torsion.  On the other hand,
Lemma~\ref{lem:H1N-free} shows that $H_1(N,\mathbb Z)$ is torsion-free.  Therefore every
torsion element of $H_1(L,\mathbb Z)$ maps to zero under $\delta$, so
\[
H_1(L,\mathbb Z)_{\tors}\subset \ker\delta.
\]
Conversely, $\ker\delta\cong Q$ is finite, hence consists entirely of torsion elements.
Therefore
\[
\ker\delta = H_1(L,\mathbb Z)_{\tors}.
\]
Since $\ker\delta\cong Q\cong \Lambda^\vee/\Lambda$, we obtain the canonical isomorphism
\[
H_1(L,\mathbb Z)_{\tors}\cong \Lambda^\vee/\Lambda.
\]

Finally, Lemma~\ref{lem:torsion-second-link} identifies $H_1(L,\mathbb Z)_{\tors}$ with
$H^2(L,\mathbb Z)_{\tors}$, and thus
\[
H^2(L,\mathbb Z)_{\tors}\cong \Lambda^\vee/\Lambda.
\]
\end{proof}

The previous lemma is the precise comparison between the topological model of Section~3 and
the discriminant group of the exceptional lattice.

\begin{proposition}\label{prop:link-discriminant}
Let $L$ be the link of the normal surface singularity $(X,0)$, and let $\Lambda$ be the
exceptional lattice of the minimal resolution. Then there is a canonical isomorphism
\[
H^2(L,\mathbb Z)_{\tors}\cong \Lambda^\vee/\Lambda.
\]
Equivalently, using the notation of Section~3,
\[
\mathcal E_L \cong \Lambda^\vee/\Lambda.
\]
\end{proposition}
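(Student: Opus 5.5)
The proposition is essentially a repackaging of Lemma~\ref{lem:torsion-boundary-discriminant}. The plan is to specialize that lemma to the minimal resolution and then check that the resulting isomorphism is canonical and independent of auxiliary choices. Concretely, I will take $\pi:\widetilde X\to X$ to be the minimal resolution, let $N=N(E_\pi)$ be a sufficiently small tubular neighborhood, and use the canonical diffeomorphism $\partial N\cong L$ recalled in Section~3. Applying Lemma~\ref{lem:torsion-boundary-discriminant} to this $N$ then gives
\[
H^2(L,\mathbb Z)_{\tors}\cong H_1(L,\mathbb Z)_{\tors}\cong \ker\bigl(H_1(L,\mathbb Z)\to H_1(N,\mathbb Z)\bigr)\cong \coker\bigl(\Lambda\to\Lambda^\vee\bigr)=\Lambda^\vee/\Lambda .
\]
Here the first isomorphism is Lemma~\ref{lem:torsion-second-link}, and the remaining ones are Steps~2--3 of that proof, which rest on Lemmas~\ref{lem:H2N-is-lattice}, \ref{lem:PL-identification} and \ref{lem:H1N-free}. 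The second displayed form is just notation: $\mathcal E_L$ stands for the topological group $H^2(L,\mathbb Z)_{\tors}$ of Section~3.

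The one hypothesis I need to check is the normal-crossings assumption used in the Mayer--Vietoris arguments of Lemmas~\ref{lem:H2N-is-lattice} and \ref{lem:H1N-free}. The minimal resolution need not have simple normal crossings; a cuspidal curve or a tacnode-type tangency can occur. However, those arguments only require that the components meet in finitely many points, and that always holds for distinct irreducible curves. An irreducible component with a cusp is still homeomorphic to its normalization modulo finitely many identifications, so for $H_2$ it contributes a single class $\mathbb Z[E_i]$. For $H_1$, cusps are unibranch and nodes add free generators, so $H_1(E_i)$ remains free. I will therefore restate those lemmas with ``components meeting in finitely many points'' and ``$H_1(E_i)$ free, $H_2(E_i)=\mathbb Z$'' in place of the SNC hypothesis. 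If this becomes delicate, an alternative is to pass to the minimal good resolution $\pi'$ and compare.

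Canonicity and independence of the resolution form the main obstacle. Two different resolutions, for instance the minimal one and a blow-up of it, give lattices $\Lambda$ and $\Lambda'=\Lambda\oplus\mathbb Z e$ with $e^2=-1$, where $\Lambda$ is embedded via pullback. The discriminant groups agree, because $\langle -1\rangle$ is unimodular and the pullback preserves the form. The isomorphisms to $H_1(L)_{\tors}$ are compatible because they are induced from the pair sequences of $(N,L)$ and $(N',L)$, and the blow-down map gives a morphism of pairs that is the identity on $L$. Since the minimal resolution is unique up to unique isomorphism, the isomorphism in the proposition is canonical. The remaining task is to verify that the sign convention (Lefschetz duality and the map $\lambda\mapsto(\lambda,-)$) matches the one used in Lemma~\ref{lem:PL-identification}, which is routine.
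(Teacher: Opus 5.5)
Your proposal is correct and follows the paper's route: the paper proves the proposition in one line, as the second isomorphism of Lemma~\ref{lem:torsion-boundary-discriminant} applied to a tubular neighborhood of the minimal exceptional divisor, which is exactly your displayed chain. Your extra check on the normal-crossings hypothesis is a legitimate repair rather than a detour: Lemmas~\ref{lem:H2N-is-lattice} and~\ref{lem:H1N-free} assume smooth components meeting with normal crossings, which fails for minimal resolutions in general (for $x^2+y^3+z^7$ the minimal resolution is a single cuspidal rational curve), and your weaker hypotheses (finitely many intersection points, $H_2(E_i)\cong\mathbb Z$, $H_1(E_i)$ free) are all that the Mayer--Vietoris arguments actually use.
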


\begin{proof}
This is exactly the second isomorphism of Lemma~\ref{lem:torsion-boundary-discriminant}.
\end{proof}

Thus the topological obstruction found in Section~3 is computed entirely by the
discriminant of the exceptional intersection lattice.

\subsection{Proof of the resolution theorem}

We now prove the main theorem of the section.

\begin{theorem}\label{thm:resolution-realization}
Let $(X,0)$ be a germ of a normal complex analytic surface, and let
\[
\pi:\widetilde X\to X
\]
be its minimal resolution.  Let $\Lambda$ be the lattice generated by the irreducible
exceptional curves, endowed with the intersection pairing, and let $M$ be the associated
intersection matrix. Then there is a canonical isomorphism
\[
E\cong \Lambda^\vee/\Lambda.
\]
In particular,
\[
|E|=|\det(M)|.
\]
\end{theorem}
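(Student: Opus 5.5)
The proof will be a composition of isomorphisms already established, followed by the determinant count. First, Theorem~\ref{thm:topological-model-E} (equivalently Proposition~\ref{prop:E-link-model}) gives a canonical isomorphism $E\cong H^2(L,\mathbb Z)_{\tors}$, where $L$ is the link of $(X,0)$. Second, we realize $L$ on the resolution side. By Section~3, $L$ is canonically diffeomorphic to $\partial N$, where $N=N(E_\pi)$ is a small tubular neighborhood of the exceptional divisor of the minimal resolution $\pi$. Proposition~\ref{prop:link-discriminant}, which rests on Lemma~\ref{lem:torsion-boundary-discriminant}, then gives $H^2(L,\mathbb Z)_{\tors}\cong\Lambda^\vee/\Lambda$. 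Composing the two yields $E\cong\Lambda^\vee/\Lambda$.

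Canonicity is the main point to check, and there are two places where it could fail. The diffeomorphism $\partial N\cong L$ is canonical only up to isotopy, but isotopic maps induce the same map on cohomology, so this causes no trouble. The identification $H^2(L)_{\tors}\cong H_1(L)_{\tors}$ of Lemma~\ref{lem:torsion-second-link} is the $\Ext^1$ identification, and it is natural.

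For the order formula, Lemma~\ref{lem:order-discriminant} gives $|\Lambda^\vee/\Lambda|=|\det(M)|$. The underlying reason is that $\Lambda\to\Lambda^\vee$ is represented by $M$, which is invertible over $\mathbb Q$ by Mumford's negative definiteness, so Smith normal form computes the order of the cokernel. Hence $|E|=|\det(M)|$.

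The main obstacle is making sure the exceptional lattice argument in Lemmas~\ref{lem:H2N-is-lattice} and \ref{lem:H1N-free} genuinely applies to the minimal resolution. Those lemmas use simple normal crossings of $E_\pi$, but the minimal resolution need not be normal crossings; for example, two components may be tangent, or a component may be singular. My first attempt would avoid the issue by observing that nothing in the argument uses minimality. One may run it on the minimal good resolution and then note that $H^2(L)_{\tors}$ is independent of the resolution. Alternatively, one can show directly that the discriminant group is invariant under blowing up a point on the exceptional divisor: this adds an orthogonal $(-1)$ summand after a unimodular change of basis, so both $\Lambda^\vee/\Lambda$ and $|\det M|$ are unchanged, which transfers the result to the minimal resolution. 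A second option is to rerun the homology computations for an arbitrary exceptional curve $E_\pi$. Then $H_2(E_\pi)$ is still free on the $[E_i]$, since a compact curve has top homology free on its irreducible components. Also $H^2(N)$ is still torsion-free, since $H_1$ of a connected compact curve is free by the normalization sequence. Either route closes the gap.
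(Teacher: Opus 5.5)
Your proof is the paper's: compose Theorem~\ref{thm:topological-model-E} with Proposition~\ref{prop:link-discriminant}, then read off the order from Lemma~\ref{lem:order-discriminant}. Your concern that the minimal resolution need not have normal crossings is a real gap that the paper's Lemma~\ref{lem:H2N-is-lattice} glosses over, and your fixes are sound: a point blow-up on $E_\pi$ replaces $\Lambda$ by the orthogonal sum $\Lambda\oplus\langle -1\rangle$, which is what carries a good-resolution computation back to the minimal lattice (noting that $H^2(L)_{\tors}$ is resolution-independent does not do this by itself), and for any compact connected curve $H_2$ is free on the components and $H_1$ is free; one small correction is that $\Ext^1(H_1(L),\mathbb Z)$ is naturally the dual group $\Hom(H_1(L)_{\tors},\mathbb Q/\mathbb Z)$, so a canonical isomorphism $H^2(L,\mathbb Z)_{\tors}\cong H_1(L,\mathbb Z)_{\tors}$ should come from Poincar\'e duality rather than from $\Ext^1$.
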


\begin{proof}
By Theorem~\ref{thm:topological-model-E} of Section~3,
\[
E\cong H^2(L,\mathbb Z)_{\tors}.
\]
By Proposition~\ref{prop:link-discriminant},
\[
H^2(L,\mathbb Z)_{\tors}\cong \Lambda^\vee/\Lambda.
\]
Combining these two canonical identifications gives
\[
E\cong \Lambda^\vee/\Lambda.
\]

The determinant formula follows from Lemma~\ref{lem:order-discriminant}:
\[
|E|=|\Lambda^\vee/\Lambda|=|\det(M)|.
\]
\end{proof}

Theorem~\ref{thm:resolution-realization} gives the first concrete computation of the local
correction term.  It shows that the group $E$ is not merely a point-supported perverse
artifact, but the discriminant group of a geometric lattice determined by the minimal
resolution.  In particular, $E$ is completely determined by the weighted dual graph of the
exceptional configuration.

\subsection{Consequences and remarks}

We conclude with several immediate remarks.

\begin{remark}\label{rem:E-analytic-invariant-via-resolution}
Since the minimal resolution is uniquely determined by the analytic type of the normal
surface germ, Theorem~\ref{thm:resolution-realization} gives a geometric proof that $E$ is
an analytic invariant.  More precisely, $E$ is determined by the exceptional lattice of the
minimal resolution, hence by the weighted dual graph of the exceptional configuration.
\end{remark}

\begin{remark}\label{rem:E-contraction}
The discriminant realization of $E$ shows that the local obstruction records the failure of
the exceptional lattice to be unimodular.  In particular, if the exceptional lattice is
unimodular, then $E=0$.  More generally, the larger the discriminant group
$\Lambda^\vee/\Lambda$, the larger the local integral correction.
\end{remark}

\begin{remark}\label{rem:E-negative-definite}
The negative definiteness of the exceptional intersection form is essential.  It guarantees
that the discriminant group $\Lambda^\vee/\Lambda$ is finite and therefore that the local
obstruction $E$ is finite.  In this sense, the finiteness of $E$ is a direct reflection of
Mumford's negative-definiteness theorem \cite{Mu61}.
\end{remark}

\begin{remark}\label{rem:bridge-to-monodromy}
Theorem~\ref{thm:resolution-realization} identifies $E$ with the discriminant group of the
exceptional lattice.  In the next section we will show that, in the isolated hypersurface
case, this same group is recovered from the integral variation map
\[
T-\mathrm{id}
\]
on vanishing cohomology.  Thus the resolution-theoretic and monodromy-theoretic
realizations coincide.
\end{remark}


\section{Hypersurface singularities and monodromy}

In this section we specialize to isolated hypersurface surface singularities and show that
the local obstruction group $E$ admits a second concrete realization through Milnor
theory.  Let $(X,0)$ be a normal surface singularity defined by a holomorphic function germ
\[
f:(\mathbb C^3,0)\to (\mathbb C,0)
\]
with an isolated critical point at $0$, and set
\[
X=f^{-1}(0).
\]
Then the link of the singularity is the boundary of the Milnor fiber, and the cohomology
of this boundary is controlled by the monodromy operator on vanishing cohomology.

The precise source statement we use is the following sentence from Jung--Saito:

\begin{quote}
\emph{Assume $(X,0)$ is a hypersurface. Then the second link cohomology is given by
$T-\id$ with $T$ the monodromy on the vanishing cohomology using the Wang sequence and
Alexander duality (or local cohomology) as in \cite{Mi68}. This should imply that
$|E|=|\det(T-\id)|$.}
\end{quote}
See \cite[Remark~6.4]{JungSaitoFactoriality}.  Our purpose in this section is to make
this precise.

The main point is that the local correction term from Sections~2 and~3 is the torsion in
the second cohomology of the link,
\[
E\cong H^2(L,\mathbb Z)_{\tors},
\]
while Milnor theory identifies the full group $H^2(L,\mathbb Z)$ with the cokernel of the
integral variation map
\[
T-\id:H^2_{\mathrm{van}}(F,\mathbb Z)\to H^2_{\mathrm{van}}(F,\mathbb Z).
\]
Thus the natural general statement is a torsion statement.  The determinant formula is
then obtained under the additional hypothesis that
$(T-\id)\otimes_{\mathbb Z}\mathbb Q$ is an isomorphism.

\subsection{Milnor fiber, vanishing cohomology, and monodromy}

Let $(X,0)$ be an isolated hypersurface surface singularity.  Thus there exists a
holomorphic function germ
\[
f:(\mathbb C^3,0)\to (\mathbb C,0)
\]
such that
\[
X=f^{-1}(0),
\]
and $0$ is an isolated critical point of $f$.  Fix sufficiently small numbers
\[
0<\eta\ll \varepsilon\ll 1,
\]
and let
\[
B_\varepsilon\subset \mathbb C^3
\]
be the closed ball of radius $\varepsilon$ centered at $0$, and
\[
D_\eta\subset \mathbb C
\]
the closed disk of radius $\eta$ centered at $0$.  For any
\[
t\in D_\eta^\ast:=D_\eta\setminus\{0\},
\]
the Milnor fiber is defined by
\[
F:=f^{-1}(t)\cap B_\varepsilon.
\]
Up to diffeomorphism, $F$ is independent of the choice of $t$ and of the sufficiently
small pair $(\varepsilon,\eta)$; see \cite[Chapter~4]{Mi68}.

Since $f$ has an isolated critical point and $X$ has complex dimension $2$, the Milnor
fiber $F$ is a connected compact oriented real $4$-manifold with boundary equal to the
link
\[
L:=X\cap \partial B_\varepsilon.
\]
Moreover, Milnor proves that for an isolated hypersurface singularity of complex
dimension $n$, the Milnor fiber has the homotopy type of a bouquet of $n$-spheres; see
\cite[Theorem~6.5]{Mi68}.  In the present case $n=2$, so
\[
\widetilde H^k(F,\mathbb Z)=0 \qquad (k\neq 2),
\]
and
\[
\widetilde H^2(F,\mathbb Z)
\]
is a free abelian group of rank equal to the Milnor number.  We write
\[
H^2_{\mathrm{van}}(F,\mathbb Z):=\widetilde H^2(F,\mathbb Z)
\]
for the integral vanishing cohomology lattice.

Transport of the fiber around a positively oriented simple loop in the punctured disk
$D_\eta^\ast$ defines the Milnor monodromy automorphism
\[
T:H^2_{\mathrm{van}}(F,\mathbb Z)\longrightarrow H^2_{\mathrm{van}}(F,\mathbb Z).
\]
We shall use the integral variation map
\[
T-\id:H^2_{\mathrm{van}}(F,\mathbb Z)\longrightarrow H^2_{\mathrm{van}}(F,\mathbb Z).
\]

All determinant statements in this section refer to this endomorphism of the free abelian
group $H^2_{\mathrm{van}}(F,\mathbb Z)$.

\subsection{Wang sequence and link cohomology}

We now relate the link cohomology to the Milnor monodromy.  The Milnor fibration
\[
f|_{B_\varepsilon\cap f^{-1}(D_\eta^\ast)}:
B_\varepsilon\cap f^{-1}(D_\eta^\ast)\longrightarrow D_\eta^\ast
\]
is a smooth fiber bundle over the punctured disk with fiber $F$; see
\cite[Chapter~4]{Mi68}.  Since the base is homotopy equivalent to $S^1$, the bundle gives
rise to the standard cohomological Wang sequence
\[
\cdots\to H^k\!\bigl(B_\varepsilon\cap f^{-1}(D_\eta^\ast),\mathbb Z\bigr)
\to H^k(F,\mathbb Z)
\overset{T-\id}{\longrightarrow}
H^k(F,\mathbb Z)
\to H^{k+1}\!\bigl(B_\varepsilon\cap f^{-1}(D_\eta^\ast),\mathbb Z\bigr)\to\cdots.
\]

We next identify the total space of the Milnor fibration with the link.

\begin{lemma}\label{lem:milnor-total-space-link}
Let $(X,0)$ be an isolated hypersurface surface singularity as above. Then
\[
B_\varepsilon\cap f^{-1}(D_\eta^\ast)
\]
is homotopy equivalent to the link $L$.
\end{lemma}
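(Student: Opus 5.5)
The plan is to compare the tube $T_{\varepsilon,\eta}:=B_\varepsilon\cap f^{-1}(D_\eta^\ast)$ with the punctured germ $X_\varepsilon\setminus\{0\}$. We already know from Section~3 that $X_\varepsilon\setminus\{0\}$ deformation retracts onto $L$, via $X_\varepsilon\setminus\{0\}\simeq L\times(0,1)$. The argument has three steps.

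\textbf{Step 1.} Invoke Milnor's fibration theorem \cite[Chapter~4, Theorem~5.11]{Mi68}: the tube fibration $f:T_{\varepsilon,\eta}\to D_\eta^\ast$ is fiber-homotopy equivalent, over the circle of radius $\eta$, to the sphere fibration $f/|f|:\partial B_\varepsilon\setminus L\to S^1$. The equivalence is realized by integrating a vector field along which $|f|$ increases and $\arg f$ is constant. This reduces the problem to a statement about the complement $\partial B_\varepsilon\setminus L$.

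\textbf{Step 2.} Identify the homotopy type of $\partial B_\varepsilon\setminus L$ with that of $L$. Here I would use the conic structure of the pair $(B_\varepsilon,X_\varepsilon)$ together with a closed tubular neighborhood $N(L)\cong L\times D^2$ of the link in the $5$-sphere $\partial B_\varepsilon$. The intended chain is: the complement of $L$ retracts onto $\partial B_\varepsilon\setminus \operatorname{int}N(L)$, and the boundary $\partial N(L)\cong L\times S^1$ should then be compared with the core $L$ by collapsing the normal circle. This requires an extra input, namely that the normal circle direction can be absorbed by the Milnor fiber direction of the fibration.

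\textbf{Step 3.} Transport the resulting homotopy equivalence back to $T_{\varepsilon,\eta}$ using Step~1. Naturality of the Wang sequence under the identification then gives the form needed for the later application, $H^2(L,\mathbb Z)\cong\coker(T-\id)$.

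\textbf{Main obstacle.} I expect Step~2 to be the main obstacle, and I am not confident it goes through as stated. On cohomology, Alexander duality in $S^5$ gives $\widetilde H^k(S^5\setminus L)\cong \widetilde H_{4-k}(L)$, and the Wang sequence computes the cohomology of a mapping torus of $F$. Neither of these obviously matches $H^\ast(L)$ in every degree; for instance, $H^1$ of the tube always contains the class pulled back from the base circle. So the first thing I would check is whether the equivalence should really be read degreewise in the range relevant to the Wang sequence, namely degree $2$ and the torsion in degree $2$, rather than as a genuine homotopy equivalence. If so, the lemma should be weakened to an isomorphism $H^3(T_{\varepsilon,\eta},\mathbb Z)\cong H^2(L,\mathbb Z)$, obtained by combining Alexander duality with Poincaré duality on $L$. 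That is exactly what Section~5 needs.
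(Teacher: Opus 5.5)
Your closing suspicion is correct, and it settles the matter: Step~2 cannot be completed because the lemma is false as stated. Write $\mathcal T:=B_\varepsilon\cap f^{-1}(D_\eta^\ast)$. Since $f\colon\mathcal T\to D_\eta^\ast$ is a fibre bundle with connected fibre $F$, the map $\pi_1(\mathcal T)\to\pi_1(D_\eta^\ast)\cong\mathbb Z$ is onto. So the pullback of the generator is a nonzero class in $H^1(\mathcal T,\mathbb Z)$; the Wang sequence in fact gives $H^1(\mathcal T,\mathbb Z)\cong H^0(F,\mathbb Z)\cong\mathbb Z$. On the other hand, $H^1(L,\mathbb Z)=\Hom(H_1(L,\mathbb Z),\mathbb Z)=0$ whenever $b_1(L)=0$, for instance for every ADE singularity in Section~6. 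Concretely, for $A_1$ one has $F\simeq S^2$ and $T=-\id$. Then $H^q(\mathcal T,\mathbb Z)=\mathbb Z,\mathbb Z,0,\mathbb Z/2$ for $q=0,1,2,3$, while $H^q(\mathbb{RP}^3,\mathbb Z)=\mathbb Z,0,\mathbb Z/2,\mathbb Z$. Your ``collapse the normal circle'' step therefore has no valid replacement: $\partial B_\varepsilon\setminus L$ is not homotopy equivalent to $L$.

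The paper's own proof does not get around this. It reads Milnor's Theorem~2.10 as saying that the tube $B_\varepsilon\cap f^{-1}(D_\eta)$ is a cone on $L$. That theorem is really the conic structure of the central fibre $X_\varepsilon=B_\varepsilon\cap f^{-1}(0)$. The tube is a contractible real $6$-dimensional region, and deleting $f^{-1}(0)$ from it removes all of $X_\varepsilon$, not just a cone point. The argument thus conflates the punctured tube with the punctured germ $X_\varepsilon\setminus\{0\}\simeq L$ of Section~3. Your Step~1, $\mathcal T\simeq\partial B_\varepsilon\setminus L$, gives the correct homotopy type.

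Your proposed weakening is the right repair, and it is what the Jung--Saito phrase ``Wang sequence and Alexander duality'' refers to. Alexander duality in $\partial B_\varepsilon\cong S^5$, followed by Poincar\'e duality on $L$, gives a shift by one degree:
\[
H^{k+1}(\mathcal T,\mathbb Z)\cong \widetilde H_{3-k}(L,\mathbb Z)\cong H^{k}(L,\mathbb Z),\qquad k=0,1,2.
\]
The case $k=0$ is exactly the base-circle class you noticed. In the Wang sequence of $\mathcal T$ one has $H^1(F)=H^3(F)=0$, so it yields $0\to H^2(\mathcal T)\to H^2_{\mathrm{van}}(F,\mathbb Z)\xrightarrow{T-\id}H^2_{\mathrm{van}}(F,\mathbb Z)\to H^3(\mathcal T)\to 0$. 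Hence $H^1(L,\mathbb Z)\cong\ker(T-\id)$ and $H^2(L,\mathbb Z)\cong\coker(T-\id)$. The subsequent proposition therefore survives, but with the tube's cohomology in degrees $2,3$ rather than $L$'s in degrees $1,2$. The case $k=2$ alone suffices for the statement about $E$. You need $k=1$ as well only if you also want the kernel identification.
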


\begin{proof}
Milnor shows that the Milnor tube
\[
B_\varepsilon\cap f^{-1}(D_\eta)
\]
is homeomorphic to the cone on the link; see \cite[Theorem~2.10]{Mi68}.  Removing the
central fiber removes the apex of this cone, so the punctured Milnor tube
\[
B_\varepsilon\cap f^{-1}(D_\eta^\ast)
\]
is homeomorphic to the cone on $L$ with its apex removed, hence to $L\times (0,1)$.
Therefore
\[
B_\varepsilon\cap f^{-1}(D_\eta^\ast)\simeq L.
\]
This is exactly the topological identification invoked in
\cite[Remark~6.4]{JungSaitoFactoriality}.
\end{proof}

Because the reduced cohomology of $F$ is concentrated in degree $2$, the Wang sequence
simplifies drastically.

\begin{proposition}\label{prop:Wang-short-exact}
Let $(X,0)$ be an isolated hypersurface surface singularity with link $L$, Milnor fiber
$F$, and monodromy
\[
T:H^2_{\mathrm{van}}(F,\mathbb Z)\to H^2_{\mathrm{van}}(F,\mathbb Z).
\]
Then there is a natural short exact sequence
\[
0\longrightarrow H^1(L,\mathbb Z)
\longrightarrow H^2_{\mathrm{van}}(F,\mathbb Z)
\overset{T-\id}{\longrightarrow}
H^2_{\mathrm{van}}(F,\mathbb Z)
\longrightarrow H^2(L,\mathbb Z)
\longrightarrow 0.
\]
Equivalently,
\[
H^1(L,\mathbb Z)\cong \ker(T-\id),
\qquad
H^2(L,\mathbb Z)\cong \coker(T-\id).
\]
\end{proposition}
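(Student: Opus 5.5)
Write $Y:=B_\varepsilon\cap f^{-1}(D_\eta^\ast)$ for the total space of the Milnor fibration. The plan is to take the cohomological Wang sequence displayed above and substitute two facts: $Y\simeq L$ (Lemma~\ref{lem:milnor-total-space-link}), and Milnor's bouquet theorem, which gives $\widetilde H^k(F,\mathbb Z)=0$ for $k\neq 2$. The only point needing care is degree~$0$ and degree~$1$, where unreduced cohomology $H^0(F)=\mathbb Z$ appears. There we must check that these terms split off without disturbing the segment we want.

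\textbf{Low degrees.} The Wang sequence starts as
\[
0\to H^0(Y)\to H^0(F)\xrightarrow{T-\id}H^0(F)\to H^1(Y)\to H^1(F)\to\cdots.
\]
Since $F$ is connected, $T$ acts trivially on $H^0(F)=\mathbb Z$, so $T-\id=0$ there. Hence $H^0(Y)\cong\mathbb Z$, and we get a short exact piece
\[
0\to\mathbb Z\to H^1(Y)\to H^1(F)=0 .
\]
The map $\mathbb Z\to H^1(Y)$ is the coboundary of the Wang sequence. It is cup product with the pullback of the generator of $H^1(D^\ast_\eta)$, i.e.\ the class $f^*[d\theta]$.

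\textbf{The degree-$1$ issue.} So the naive substitution gives $H^1(Y)\cong\mathbb Z\oplus\ker(T-\id)$, not $\ker(T-\id)$. I would reconcile this as follows. The sequence continues
\[
0=H^1(F)\to H^2(Y)\to H^2(F)\xrightarrow{T-\id}H^2(F)\to H^3(Y)\to H^3(F)=0 .
\]
Thus the sequence really reads $\ker(T-\id)\cong H^2(Y)$ and $\coker(T-\id)\cong H^3(Y)$: one degree shift relative to the statement. It is therefore not literally $H^1(L)$ and $H^2(L)$ with $Y\simeq L$.

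Indeed, $Y$ is not homotopy equivalent to $L$. The argument of Lemma~\ref{lem:milnor-total-space-link} removes the whole central fiber, not just the apex, so $Y\simeq(\text{tube})\setminus X$, which fibers over $S^1$. The correct classical route, as in \cite{Mi68} with Alexander duality, is the following.

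\textbf{Alexander duality route.}
\begin{enumerate}
\item Identify $Y$ up to homotopy with $S_\varepsilon\setminus L$, the complement of the link $L\subset S^5$.
\item Apply Alexander duality:
\[
\widetilde H^k(S^5\setminus L)\cong \widetilde H_{4-k}(L)\cong H^{k-1}(L)
\]
by Poincaré duality on the closed $3$-manifold $L$, valid for $k=2,3$.
\item Conclude $H^2(Y)\cong H^1(L)$ and $H^3(Y)\cong H^2(L)$.
\item Feed these into the degree-$2$ and degree-$3$ segment above, which yields exactly the asserted four-term exact sequence.
\end{enumerate}
Naturality holds because every map used is a canonical connecting or duality map.

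\textbf{Main obstacle.} The hard step is this correct identification of the Wang total space with the link complement, replacing the citation of Lemma~\ref{lem:milnor-total-space-link}. Specifically:
\begin{itemize}
\item the homotopy equivalence $B_\varepsilon\cap f^{-1}(D_\eta^\ast)\simeq S_\varepsilon\setminus L$ comes from Milnor's two equivalent fibrations, $f/|f|$ on $S_\varepsilon\setminus L$ and $f$ on the tube;
\item one must check the degree bookkeeping in Alexander duality for the $3$-manifold $L\subset S^5$.
\end{itemize}
I would first verify the outcome on the $A_1$ case ($L=\mathbb{RP}^3$, $T=-1$ on $\mathbb Z$). There $\coker(T-\id)=\mathbb Z/2=H^2(\mathbb{RP}^3)$ and $\ker(T-\id)=0=H^1(\mathbb{RP}^3)$, which confirms the indexing.
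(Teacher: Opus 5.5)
Your proposal is correct, and it takes a genuinely different route from the paper. It is in fact the route that works.

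The paper proves the proposition by substituting Lemma~\ref{lem:milnor-total-space-link}, i.e.\ $B_\varepsilon\cap f^{-1}(D_\eta^\ast)\simeq L$, directly into the Wang sequence. As you noticed, that lemma is false. The cone on $L$ is $X\cap B_\varepsilon$, not the tube $B_\varepsilon\cap f^{-1}(D_\eta)$, which is contractible. Deleting the central fiber removes this whole cone, not just its apex. Concretely:
\begin{itemize}
\item The low-degree part of the Wang sequence forces $H^1(Y,\mathbb Z)\cong\mathbb Z$. This is impossible for $Y\simeq L=\mathbb{RP}^3$ in the $A_1$ case, where $H^1(\mathbb{RP}^3,\mathbb Z)=0$.
\item Even granting the lemma, the paper's own indexing $H^k(Y)\to H^k(F)\to H^k(F)\to H^{k+1}(Y)$ would give $\ker(T-\id)\cong H^2(L)$ and $\coker(T-\id)\cong H^3(L)\cong\mathbb Z$. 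That is not the stated sequence.
\end{itemize}

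Your replacement proceeds in three steps. First, $Y\simeq S_\varepsilon\setminus L$ via the equivalence of Milnor's tube and sphere fibrations. Second, Alexander and Poincar\'e duality give $\widetilde H^k(S^5\setminus L)\cong \widetilde H_{4-k}(L)\cong H^{k-1}(L)$ for $k=2,3$. Third, these feed into the segment $0=H^1(F)\to H^2(Y)\to H^2(F)\to H^2(F)\to H^3(Y)\to H^3(F)=0$. This is the classical argument that the quoted Jung--Saito remark points to ("Wang sequence and Alexander duality"), and it actually proves the statement. The paper's version buys only brevity, at the price of resting on an incorrect lemma.

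Two small clean-ups to your write-up:
\begin{itemize}
\item Delete the sentence asserting $H^1(Y)\cong\mathbb Z\oplus\ker(T-\id)$. Your own low-degree computation gives $H^1(Y)\cong\mathbb Z$ exactly, because $H^1(F)=0$. That clean statement is the best witness that $Y\not\simeq L$.
\item When identifying $Y$, say explicitly three things. $Y$ is homotopy equivalent to its restriction over the circle $\partial D_\eta$. That restriction is equivalent as a fibration to $f/|f|\colon S_\varepsilon\setminus L\to S^1$, whose fiber is the interior of $F$. The convention $T$ versus $T^{-1}$ in the Wang sequence is immaterial, since $T^{-1}-\id=-T^{-1}(T-\id)$ has the same kernel and an isomorphic cokernel.
\end{itemize}
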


\begin{proof}
By Lemma~\ref{lem:milnor-total-space-link}, the total space of the Milnor fibration is
homotopy equivalent to $L$.  Therefore the Wang sequence becomes
\[
\cdots\to H^k(L,\mathbb Z)\to H^k(F,\mathbb Z)
\overset{T-\id}{\longrightarrow} H^k(F,\mathbb Z)\to H^{k+1}(L,\mathbb Z)\to\cdots.
\]

By the bouquet-of-spheres theorem cited above, one has
\[
\widetilde H^k(F,\mathbb Z)=0 \qquad (k\neq 2),
\]
and
\[
H^0(F,\mathbb Z)\cong \mathbb Z.
\]
Hence the only nontrivial reduced cohomology term in the Wang sequence occurs in degree
$2$.  It follows that the sequence in degrees $1$ and $2$ reduces to
\[
0\longrightarrow H^1(L,\mathbb Z)
\longrightarrow H^2(F,\mathbb Z)
\overset{T-\id}{\longrightarrow}
H^2(F,\mathbb Z)
\longrightarrow H^2(L,\mathbb Z)
\longrightarrow 0.
\]
Since $H^2(F,\mathbb Z)=\widetilde H^2(F,\mathbb Z)$ in this situation, this is exactly
\[
0\longrightarrow H^1(L,\mathbb Z)
\longrightarrow H^2_{\mathrm{van}}(F,\mathbb Z)
\overset{T-\id}{\longrightarrow}
H^2_{\mathrm{van}}(F,\mathbb Z)
\longrightarrow H^2(L,\mathbb Z)
\longrightarrow 0.
\]
The kernel and cokernel descriptions are immediate.
\end{proof}

This proposition is the precise form of the statement quoted by Jung--Saito in
Remark~6.4.

\subsection{Integral variation and the local obstruction}

We now compare the local obstruction from Section~3 with the variation map.

Recall that Section~3 gave the canonical topological realization
\[
E\cong H^2(L,\mathbb Z)_{\tors}.
\]
On the other hand, Proposition~\ref{prop:Wang-short-exact} identifies the full group
$H^2(L,\mathbb Z)$ with the cokernel of $T-\id$.  Combining the two gives a precise
monodromy-theoretic model for $E$.

\begin{definition}\label{def:variation-cokernel}
Let
\[
\mathcal V_f:=\coker\!\Bigl(
T-\id:H^2_{\mathrm{van}}(F,\mathbb Z)\to H^2_{\mathrm{van}}(F,\mathbb Z)
\Bigr).
\]
By Proposition~\ref{prop:Wang-short-exact}, there is a canonical isomorphism
\[
\mathcal V_f\cong H^2(L,\mathbb Z).
\]
\end{definition}

The main point is that the obstruction $E$ is not, in complete generality, the whole group
$\mathcal V_f$, but its torsion subgroup.  This is the correct general form of the
monodromy statement.

\begin{proposition}\label{prop:variation-torsion-obstruction}
Let $(X,0)$ be an isolated hypersurface surface singularity. Then there are canonical
isomorphisms
\[
E\cong H^2(L,\mathbb Z)_{\tors}
\cong \mathcal V_f{}_{\tors}
\cong \coker(T-\id)_{\tors}.
\]
\end{proposition}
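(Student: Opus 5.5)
The plan is to chain three identifications, each already available, and check only that the middle step respects torsion subgroups. The first isomorphism $E\cong H^2(L,\mathbb Z)_{\tors}$ is exactly Theorem~\ref{thm:topological-model-E}, which holds for any normal surface germ. In particular it holds for an isolated hypersurface surface singularity, which is normal: it is a hypersurface, hence Cohen--Macaulay, and its singular locus has codimension two, so Serre's criterion applies. Nothing new is needed at this step.

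For the second isomorphism I will use Proposition~\ref{prop:Wang-short-exact}. It gives a natural surjection $H^2_{\mathrm{van}}(F,\mathbb Z)\to H^2(L,\mathbb Z)$ whose kernel is the image of $T-\id$. This surjection induces the canonical isomorphism $\mathcal V_f\cong H^2(L,\mathbb Z)$ of Definition~\ref{def:variation-cokernel}. The key observation is that any group isomorphism $\phi:G\to G'$ carries the torsion subgroup $G_{\tors}$ onto $G'_{\tors}$, since $n\phi(x)=\phi(nx)$ and $\phi^{-1}$ is also a homomorphism. Hence $\phi$ restricts to $H^2(L,\mathbb Z)_{\tors}\cong \mathcal V_{f,\tors}$. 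The third isomorphism, $\mathcal V_{f,\tors}=\coker(T-\id)_{\tors}$, is the definition of $\mathcal V_f$.

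On canonicity: the Wang-sequence map is natural, coming from the connecting map of the fibration over $D_\eta^\ast\simeq S^1$ together with the homotopy equivalence of Lemma~\ref{lem:milnor-total-space-link}. Taking torsion subgroups is a functor. So the composite is canonical up to the choices already fixed there, namely the positively oriented generator of $\pi_1(D_\eta^\ast)$ and the Milnor radii $(\varepsilon,\eta)$. Milnor's independence results make any two choices of radii give compatible identifications.

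The only point I expect to need care is checking that the link $L$ in Section~3, defined as $X\cap\partial B_\varepsilon$, is the same space whose cohomology appears in the Wang sequence. This is exactly what Lemma~\ref{lem:milnor-total-space-link} provides: the punctured Milnor tube retracts onto $L\times(0,1)$, with the same $\varepsilon$. A different admissible $\varepsilon$ gives canonically homotopy equivalent links by the conic structure theorem. Once this compatibility is recorded, the proposition is a formal composition, so I do not anticipate a genuine obstacle. All the substantive content sits in Theorem~\ref{thm:topological-model-E} and Proposition~\ref{prop:Wang-short-exact}.
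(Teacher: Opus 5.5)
Your chain is exactly the paper's proof: Theorem~\ref{thm:topological-model-E}, then Proposition~\ref{prop:Wang-short-exact}, then restriction of an isomorphism to torsion subgroups. Your normality check via Serre's criterion is a welcome detail that the paper leaves implicit.

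The trouble is the one step you single out as needing care. The claim that the punctured Milnor tube $B_\varepsilon\cap f^{-1}(D_\eta^\ast)$ retracts onto $L\times(0,1)$ is false. The conic structure theorem makes the central fibre $X\cap B_\varepsilon$ a cone on $L$. The tube $B_\varepsilon\cap f^{-1}(D_\eta)$ is contractible, and deleting the central fibre removes that whole cone, not just an apex. The punctured tube is in fact homotopy equivalent to $S^5_\varepsilon\setminus L$, the total space of Milnor's fibration over $S^1$.

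You can see the failure directly in two ways.
\begin{itemize}
\item Since $F$ is connected with $H^1(F,\mathbb Z)=0$, the Wang sequence gives $H^1$ of the total space $\cong\mathbb Z$. But for $A_1$ one has $H^1(\mathbb{RP}^3,\mathbb Z)=0$.
\item If the total space were $\simeq L$, the degree-two Wang segment would read $0\to H^2(L)\to H^2_{\mathrm{van}}\to H^2_{\mathrm{van}}\to H^3(L)\cong\mathbb Z\to 0$. This is impossible whenever $\coker(T-\id)$ is finite; for $A_1$, $T=-\id$ on $H^2_{\mathrm{van}}\cong\mathbb Z$.
\end{itemize}
Lemma~\ref{lem:milnor-total-space-link} has precisely this defect, so you have inherited it. The canonicity you derive from that homotopy equivalence therefore has no basis as stated.

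The conclusion of Proposition~\ref{prop:Wang-short-exact} is nevertheless true. The repair is the one named in the Jung--Saito quote: keep the Wang sequence for $S^5_\varepsilon\setminus L\to S^1$ and insert Alexander duality $\widetilde H^q(S^5_\varepsilon\setminus L,\mathbb Z)\cong\widetilde H_{4-q}(L,\mathbb Z)$. Using $H^1(F)=H^3(F)=0$, the degree-two segment becomes
\[
0\to H_2(L,\mathbb Z)\to H^2_{\mathrm{van}}(F,\mathbb Z)\xrightarrow{T-\id}H^2_{\mathrm{van}}(F,\mathbb Z)\to H_1(L,\mathbb Z)\to 0.
\]
Poincar\'e duality on $L$ then converts $H_2(L)$ and $H_1(L)$ into $H^1(L)$ and $H^2(L)$. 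Both dualities are canonical once orientations are fixed, so your canonicity bookkeeping goes through with this route in place of the retraction. The remaining steps (torsion subgroups are functorial; compose the isomorphisms) are fine as written.
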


\begin{proof}
By Theorem~\ref{thm:topological-model-E},
\[
E\cong H^2(L,\mathbb Z)_{\tors}.
\]
By Proposition~\ref{prop:Wang-short-exact},
\[
H^2(L,\mathbb Z)\cong \coker(T-\id)=\mathcal V_f.
\]
Passing to torsion subgroups gives
\[
H^2(L,\mathbb Z)_{\tors}\cong \mathcal V_f{}_{\tors}
\cong \coker(T-\id)_{\tors}.
\]
Composing these canonical identifications proves the proposition.
\end{proof}

Thus the local correction term is the torsion part of the integral defect of the variation
map.

The determinant formula requires a finiteness hypothesis on the full cokernel.

\begin{lemma}\label{lem:determinant-cokernel}
Assume that
\[
(T-\id)\otimes_{\mathbb Z}\mathbb Q
:
H^2_{\mathrm{van}}(F,\mathbb Z)\otimes_{\mathbb Z}\mathbb Q
\longrightarrow
H^2_{\mathrm{van}}(F,\mathbb Z)\otimes_{\mathbb Z}\mathbb Q
\]
is an isomorphism. Then $\coker(T-\id)$ is finite and
\[
|\coker(T-\id)|=|\det(T-\id)|.
\]
\end{lemma}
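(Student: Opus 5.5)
The plan is to reduce the statement to the standard Smith normal form computation for an integral square matrix that is invertible over $\mathbb Q$. This is the same argument used in Lemma~\ref{lem:order-discriminant} for the discriminant group.

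\emph{Setup.} By Milnor's bouquet theorem, recalled above, $H^2_{\mathrm{van}}(F,\mathbb Z)$ is free of rank $\mu$. Fix a $\mathbb Z$-basis, so that $T-\id$ is represented by an integral $\mu\times\mu$ matrix $V$. The determinant $\det(T-\id)=\det V$ does not depend on the basis, since a change of basis conjugates $V$ by an element of $GL_\mu(\mathbb Z)$. The hypothesis that $(T-\id)\otimes\mathbb Q$ is an isomorphism is equivalent to $\det V\neq 0$.

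\emph{Diagonalization.} By the Smith normal form theorem over the principal ideal domain $\mathbb Z$, there are $P,Q\in GL_\mu(\mathbb Z)$ with $PVQ=\operatorname{diag}(d_1,\dots,d_\mu)$ and $d_i\in\mathbb Z$. Multiplying on either side by unimodular matrices does not change the cokernel up to isomorphism, because it amounts to changing bases of the source and target. Hence
\[
\coker(T-\id)\cong\bigoplus_{i=1}^{\mu}\mathbb Z/d_i\mathbb Z .
\]
Since $\det P,\det Q=\pm1$, we get $\prod_i d_i=\pm\det V\neq 0$, so every $d_i$ is nonzero. Each summand $\mathbb Z/d_i\mathbb Z$ is therefore finite of order $|d_i|$. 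It follows that $\coker(T-\id)$ is finite of order $\prod_i|d_i|=|\det V|=|\det(T-\id)|$.

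\emph{Main obstacle.} No step here is a real obstacle. The only points to state carefully are the basis-independence of the determinant and the fact that the unimodular factors have determinant $\pm1$. Finiteness can also be seen without Smith normal form: since $\det V\neq0$, the image of $V$ is a full-rank sublattice of $\mathbb Z^\mu$, and its index is $|\det V|$ by the covolume formula. I will cite the Smith normal form argument to match Appendix~A.
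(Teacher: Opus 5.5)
Your proof is correct and follows essentially the same route as the paper. Both represent $T-\id$ by an integral square matrix on the free lattice $H^2_{\mathrm{van}}(F,\mathbb Z)$, deduce finiteness of the cokernel from invertibility over $\mathbb Q$, and read off its order via the Smith normal form as in Appendix~A; you simply write out the details the paper leaves to that appendix, namely that the unimodular factors have determinant $\pm1$ and that the invariant factors are nonzero.
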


\begin{proof}
The group $H^2_{\mathrm{van}}(F,\mathbb Z)$ is free abelian of finite rank.  Hence
$T-\id$ is represented by an integral square matrix.  If the rationalized map is an
isomorphism, then this matrix is invertible over $\mathbb Q$, so its cokernel over
$\mathbb Z$ is finite.  The order of that finite cokernel is the absolute value of the
determinant by the standard Smith normal form argument; see Appendix~A.
\end{proof}

\subsection{Proof of the monodromy theorem}

We now state the monodromy realization in its correct generality, followed by the
determinant corollary used in the examples.

\begin{theorem}\label{thm:monodromy-realization}
Let $(X,0)$ be an isolated hypersurface surface singularity. Then the local obstruction
group
\[
E:=H^0({}^p_+IC_X\mathbb Z)_0
\]
is canonically identified with the torsion subgroup of the cokernel of the integral
variation map:
\[
E\cong \coker(T-\id)_{\tors}.
\]
Equivalently,
\[
E\cong \mathcal V_f{}_{\tors}.
\]
\end{theorem}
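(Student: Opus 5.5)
The plan is to deduce Theorem~\ref{thm:monodromy-realization} by composing two identifications already established in the paper, with one point of canonicity to verify in between. First, Theorem~\ref{thm:topological-model-E} gives the canonical isomorphism
\[
E\cong H^2(L,\mathbb Z)_{\tors}.
\]
This step uses only normality of the germ, and an isolated hypersurface surface singularity in $\mathbb C^3$ is normal by Serre's criterion: it is a hypersurface, hence Cohen--Macaulay, and it is regular in codimension one. Second, Proposition~\ref{prop:Wang-short-exact} identifies the full group $H^2(L,\mathbb Z)$ with $\coker(T-\id)=\mathcal V_f$. It does this through the Wang sequence of the Milnor fibration, together with Lemma~\ref{lem:milnor-total-space-link} (the punctured Milnor tube is homotopy equivalent to $L$) and Milnor's bouquet theorem.

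The second step is to pass to torsion subgroups. Taking torsion is a functor on abelian groups, so it sends isomorphisms to isomorphisms. The isomorphism $\mathcal V_f\xrightarrow{\sim}H^2(L,\mathbb Z)$ is induced by the connecting map of the Wang sequence. It therefore restricts to an isomorphism $(\mathcal V_f)_{\tors}\cong H^2(L,\mathbb Z)_{\tors}$, and the composite
\[
E\cong H^2(L,\mathbb Z)_{\tors}\cong \coker(T-\id)_{\tors}
\]
is the claimed statement; this is the content of Proposition~\ref{prop:variation-torsion-obstruction}. The equivalent formulation $E\cong\mathcal V_f{}_{\tors}$ is then just Definition~\ref{def:variation-cokernel}.

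The step needing most care is canonicity and the orientation conventions. The homotopy equivalence of Lemma~\ref{lem:milnor-total-space-link} must be compatible with the identification $X_\varepsilon\setminus\{0\}\simeq L$ used in Section~3. Both come from Milnor's conic structure theorem and restrict to the identity on the boundary sphere, so they should be compatible up to canonical homotopy. I would check this first by comparing both maps with the inclusion $L\hookrightarrow B_\varepsilon\setminus X$ versus $L\hookrightarrow X\setminus\{0\}$; if needed, I would instead use Alexander duality in $S^5$, as suggested in the quoted Jung--Saito remark.

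A change of sign convention, replacing $T-\id$ by $\id-T$ or by the monodromy of the opposite loop, alters the cokernel only by an automorphism. So the isomorphism class of $\coker(T-\id)_{\tors}$, and in particular its order, is unaffected. Substantively, the theorem rests entirely on the already-established Theorem~\ref{thm:topological-model-E}.
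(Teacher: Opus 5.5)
Your deduction is the paper's own. The theorem is proved by citing Proposition~\ref{prop:variation-torsion-obstruction}, which composes $E\cong H^2(L,\mathbb Z)_{\tors}$ (Theorem~\ref{thm:topological-model-E}) with $H^2(L,\mathbb Z)\cong\coker(T-\id)$ (Proposition~\ref{prop:Wang-short-exact}) and passes to torsion subgroups. Your normality check via Serre's criterion and your sign remarks are fine.

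\textbf{The gap.} The step you single out as needing the most care rests on a false premise, inherited from Lemma~\ref{lem:milnor-total-space-link}; the paper's proof of Proposition~\ref{prop:Wang-short-exact} has the same defect. The punctured Milnor tube $B_\varepsilon\cap f^{-1}(D_\eta^\ast)$ is not homotopy equivalent to $L$.
\begin{enumerate}
\item[(i)] The tube $B_\varepsilon\cap f^{-1}(D_\eta)$ is a contractible $6$-dimensional neighbourhood of $0$, not the cone on $L$; the cone on $L$ is $X\cap B_\varepsilon$.
\item[(ii)] Deleting the central fibre deletes all of $X\cap B_\varepsilon$, not just an apex. So the punctured tube is homotopy equivalent to $B_\varepsilon\setminus X\simeq S^5\setminus L$.
\item[(iii)] $H^1$ already distinguishes the two spaces. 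Since $F$ is connected, $H^1(F,\mathbb Z)=0$ and $T=\id$ on $H^0(F)$, the Wang sequence gives $H^1\cong\mathbb Z$ for the punctured tube. For $A_1$, however, $H^1(\mathbb{RP}^3,\mathbb Z)=0$.
\item[(iv)] Since $L\subset X$, the inclusion $L\hookrightarrow B_\varepsilon\setminus X$ you propose to compare with does not exist.
\item[(v)] Even granting the lemma, the Wang segment $H^2(F)\xrightarrow{T-\id}H^2(F)\to H^3(\,\cdot\,)\to H^3(F)=0$ would put $\coker(T-\id)$ in degree $3$. There one has $H^3(L,\mathbb Z)\cong\mathbb Z$, not $H^2(L,\mathbb Z)$.
\end{enumerate}

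\textbf{The repair.} The Alexander duality route, which you list only as a fallback, is the argument that actually works and should be the primary one.
\begin{enumerate}
\item[(i)] The Wang sequence of $S^5\setminus L\to S^1$ gives $\ker(T-\id)\cong H^2(S^5\setminus L,\mathbb Z)$ and $\coker(T-\id)\cong H^3(S^5\setminus L,\mathbb Z)$.
\item[(ii)] Alexander duality in $S^5$, then Poincar\'e duality on $L$, give $H^3(S^5\setminus L,\mathbb Z)\cong H_1(L,\mathbb Z)\cong H^2(L,\mathbb Z)$ and $H^2(S^5\setminus L,\mathbb Z)\cong H_2(L,\mathbb Z)\cong H^1(L,\mathbb Z)$.
\item[(iii)] This degree shift is exactly what produces the exact sequence of Proposition~\ref{prop:Wang-short-exact}, whose statement is therefore correct.
\end{enumerate}
An equivalent homological argument: $h_*-\id=\mathrm{Var}\circ j$, where $\mathrm{Var}:H_2(F,\partial F)\to H_2(F)$ is an isomorphism. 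The pair sequence of $(F,\partial F)$ then gives $\coker j\cong H_1(L,\mathbb Z)$.

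With this repair your composite goes through. The compatibility with $X_\varepsilon\setminus\{0\}\simeq L$ that you planned to verify is then a non-issue. No homotopy equivalence of a punctured space with $L$ enters the monodromy side, so you are simply composing two isomorphisms through the single group $H^2(L,\mathbb Z)$.
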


\begin{proof}
This is precisely Proposition~\ref{prop:variation-torsion-obstruction}.
\end{proof}

\begin{corollary}\label{cor:monodromy-determinant}
Assume in addition that
\[
(T-\id)\otimes_{\mathbb Z}\mathbb Q
\]
is an isomorphism. Then
\[
E\cong \coker(T-\id),
\]
and consequently
\[
|E|=|\det(T-\id)|.
\]
\end{corollary}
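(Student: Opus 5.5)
The corollary follows by combining Theorem~\ref{thm:monodromy-realization} with Lemma~\ref{lem:determinant-cokernel}. The plan is to show that, under the rational invertibility hypothesis, the full cokernel $\mathcal V_f=\coker(T-\id)$ is already a torsion group. Then $\mathcal V_f{}_{\tors}=\mathcal V_f$, and the torsion identification of $E$ upgrades to an identification with the whole cokernel.

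\textbf{Step 1.} Since $H^2_{\mathrm{van}}(F,\mathbb Z)$ is free of finite rank (Milnor's bouquet theorem) and $(T-\id)\otimes_{\mathbb Z}\mathbb Q$ is an isomorphism, apply Lemma~\ref{lem:determinant-cokernel}. This shows that $\coker(T-\id)$ is finite of order $|\det(T-\id)|$. Equivalently, by right exactness of $-\otimes_{\mathbb Z}\mathbb Q$,
\[
\coker(T-\id)\otimes_{\mathbb Z}\mathbb Q\cong\coker\bigl((T-\id)\otimes_{\mathbb Z}\mathbb Q\bigr)=0,
\]
so the finitely generated group $\coker(T-\id)$ is torsion, hence finite.

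\textbf{Step 2.} A finite abelian group equals its torsion subgroup, so $\coker(T-\id)_{\tors}=\coker(T-\id)$. Feeding this into Theorem~\ref{thm:monodromy-realization} gives the canonical isomorphism
\[
E\cong\coker(T-\id)_{\tors}=\coker(T-\id).
\]
Taking orders and using Step~1 yields $|E|=|\det(T-\id)|$.

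As a consistency check, Proposition~\ref{prop:Wang-short-exact} shows that the hypothesis is equivalent to $\ker(T-\id)=H^1(L,\mathbb Z)=0$, that is, $b_1(L)=0$. In that case $H^2(L,\mathbb Z)$ is entirely torsion, which matches Lemma~\ref{lem:rational-link-h1-torsion}.

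There is no real obstacle here; the only point requiring care is Step~1's use of Smith normal form, which is already packaged in Lemma~\ref{lem:determinant-cokernel}. All the substantive content, namely the perverse-to-topological identification $E\cong H^2(L,\mathbb Z)_{\tors}$ and the Wang sequence, is inherited from the theorem.
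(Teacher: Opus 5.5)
Your proposal is correct and follows the paper's proof: Lemma~\ref{lem:determinant-cokernel} shows that $\coker(T-\id)$ is finite, so it equals its own torsion subgroup. Theorem~\ref{thm:monodromy-realization} then gives $E\cong\coker(T-\id)$ and $|E|=|\det(T-\id)|$. Your extra remark, that the hypothesis is equivalent to $\ker(T-\id)\cong H^1(L,\mathbb Z)=0$ (i.e.\ $b_1(L)=0$), is a valid side observation but is not needed for the argument.
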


\begin{proof}
Under the stated hypothesis, Lemma~\ref{lem:determinant-cokernel} shows that
$\coker(T-\id)$ is finite.  Hence its torsion subgroup is the whole group.  By
Theorem~\ref{thm:monodromy-realization},
\[
E\cong \coker(T-\id).
\]
Taking orders and using Lemma~\ref{lem:determinant-cokernel} gives
\[
|E|=|\coker(T-\id)|=|\det(T-\id)|.
\]
\end{proof}

This corollary is the precise rigorous form of the determinant statement anticipated by
Jung--Saito in Remark~6.4.

\subsection{Compatibility with Picard--Lefschetz theory}

The preceding theorem and corollary are compatible with the classical Picard--Lefschetz
picture.  For isolated hypersurface singularities, the vanishing cohomology carries the
Picard--Lefschetz intersection form, and the monodromy is generated by Picard--Lefschetz
transformations associated with vanishing cycles.  In the simple ADE cases, the
exceptional lattice of the minimal resolution is the negative of the corresponding root
lattice, while the monodromy on vanishing cohomology is governed by the associated
Coxeter transformation.  Thus the discriminant group of the exceptional lattice and the
torsion cokernel of $T-\id$ agree exactly as predicted by the general theorems proved
above.

In the ADE cases considered in Section~6, the extra hypothesis of
Corollary~\ref{cor:monodromy-determinant} is satisfied, so the determinant formula applies
directly.  This is especially transparent in type $A_k$, where one recovers the value
$k+1$ on both the resolution and monodromy sides.

\begin{remark}\label{rem:monodromy-second-flagship}
Theorem~\ref{thm:monodromy-realization} is the second flagship theorem of the paper, while
Corollary~\ref{cor:monodromy-determinant} is the determinant refinement needed for explicit
computation.  Together they show that the integral discrepancy between
${}^pIC_X\mathbb Z$ and ${}^p_+IC_X\mathbb Z$ is visible directly in Milnor theory: it is
the torsion part of the integral defect of the variation map $T-\id$, and under the
natural rational-invertibility hypothesis its order is $|\det(T-\id)|$.
\end{remark}


\section{Examples}

In this section we illustrate the preceding results on explicit singularities.  The purpose
is threefold.  First, the examples anchor the abstract theory in standard families of
surface singularities.  Second, they provide a practical check on signs, conventions, and
normalizations in the comparison among the perverse, topological, resolution-theoretic,
and monodromy-theoretic realizations of the obstruction group $E$.  Third, they show that
the theory is not confined to the ADE singularities.

Throughout this section, $(X,0)$ denotes an isolated hypersurface surface singularity with
link $L$.  By Theorem~\ref{thm:topological-model-E},
Theorem~\ref{thm:resolution-realization}, and
Theorem~\ref{thm:monodromy-realization}, the following groups agree up to the natural
identifications established earlier:
\[
E,
\qquad
H^2(L,\mathbb Z)_{\tors},
\qquad
\Lambda^\vee/\Lambda,
\qquad
\coker(T-\id)_{\tors}.
\]
Whenever $(T-\id)\otimes_{\mathbb Z}\mathbb Q$ is an isomorphism,
Corollary~\ref{cor:monodromy-determinant} further yields
\[
|E|=|\det(T-\id)|.
\]

\subsection{A fully worked \texorpdfstring{$A_k$}{Ak} example}

Consider the hypersurface singularity of type $A_k$,
\[
X=\{(x,y,z)\in \mathbb C^3 \mid x^2+y^2+z^{k+1}=0\},
\qquad k\ge 1.
\]
We compute all four realizations of $E$ explicitly.

\subsubsection*{Resolution-theoretic realization}

By Brieskorn's simultaneous resolution theory for the simple surface singularities, the
minimal resolution of an $A_k$ singularity has exceptional divisor given by a chain of
$k$ smooth rational curves with intersection pattern the Dynkin diagram $A_k$; see
\cite{Bri68}.  Concretely, the irreducible exceptional curves
\[
E_1,\dots,E_k
\]
satisfy
\[
E_i^2=-2,
\qquad
E_i\cdot E_{i+1}=1,
\qquad
E_i\cdot E_j=0 \ \text{for } |i-j|>1.
\]
Thus the exceptional lattice is the negative root lattice of type $A_k$, and its
intersection matrix is
\[
M_{A_k}=
\begin{pmatrix}
-2 & 1  & 0  & \cdots & 0 \\
1  & -2 & 1  & \ddots & \vdots \\
0  & 1  & -2 & \ddots & 0 \\
\vdots & \ddots & \ddots & \ddots & 1 \\
0 & \cdots & 0 & 1 & -2
\end{pmatrix}.
\]
A standard induction on $k$ using expansion along the first row gives
\[
\det(M_{A_k})=(-1)^k(k+1),
\]
hence
\[
|\det(M_{A_k})|=k+1.
\]
By Theorem~\ref{thm:resolution-realization},
\[
\Lambda^\vee/\Lambda \cong \mathbb Z/(k+1)\mathbb Z,
\qquad
|E|=k+1.
\]

\subsubsection*{Topological realization}

It is classical that the link of the $A_k$ singularity is the lens space
\[
L\cong L(k+1,1);
\]
see, for instance, \cite{Mi68}.  Therefore
\[
H_1(L,\mathbb Z)\cong \mathbb Z/(k+1)\mathbb Z.
\]
Since $L$ is a closed oriented $3$-manifold, Lemma~\ref{lem:torsion-second-link} gives
\[
H^2(L,\mathbb Z)_{\tors}\cong H_1(L,\mathbb Z)_{\tors}
\cong \mathbb Z/(k+1)\mathbb Z.
\]
Hence Theorem~\ref{thm:topological-model-E} yields
\[
E\cong H^2(L,\mathbb Z)_{\tors}\cong \mathbb Z/(k+1)\mathbb Z.
\]

\subsubsection*{Monodromy realization}

For the simple hypersurface singularities, classical Picard--Lefschetz theory identifies
the vanishing lattice with the corresponding ADE root lattice, and the Milnor monodromy
with the corresponding Coxeter transformation.  In type $A_k$, this yields the
characteristic polynomial
\[
\frac{t^{k+1}-1}{t-1}=t^k+t^{k-1}+\cdots+t+1.
\]
Evaluating at $t=1$ gives
\[
|\det(T-\id)|=k+1.
\]
Since $1$ is not an eigenvalue in this case, Corollary~\ref{cor:monodromy-determinant}
applies and gives
\[
|E|=|\det(T-\id)|=k+1.
\]

\subsubsection*{Conclusion}

Thus all realizations agree and are completely explicit:
\[
E
\cong
H^2(L,\mathbb Z)_{\tors}
\cong
\Lambda^\vee/\Lambda
\cong
\mathbb Z/(k+1)\mathbb Z,
\]
and
\[
|E|=|\det(M_{A_k})|=|\det(T-\id)|=k+1.
\]

\begin{remark}
For $k=1$, one obtains the ordinary double point.  In that case the exceptional lattice is
generated by a single $(-2)$-curve, the link is $L(2,1)\cong \mathbb RP^3$, and
\[
E\cong \mathbb Z/2\mathbb Z.
\]
Thus the simplest surface node already carries a nontrivial integral obstruction.
\end{remark}

\subsection{The exceptional ADE cases and the role of \texorpdfstring{$E_8$}{E8}}

We next consider the remaining simply laced rational double points.  By Brieskorn's
simultaneous resolution theory, the exceptional configurations of the simple surface
singularities of types $D_n$, $E_6$, $E_7$, and $E_8$ are given by the corresponding Dynkin
diagrams, with each irreducible exceptional curve having self-intersection $-2$; see
\cite{Bri68}.  Thus the exceptional lattice is the negative root lattice of the indicated
type, and the intersection matrix is the negative Cartan matrix.

For type $D_n$ $(n\ge 4)$, the determinant of the Cartan matrix is $4$.  More precisely,
the discriminant group is
\[
\Lambda^\vee/\Lambda \cong
\begin{cases}
\mathbb Z/4\mathbb Z, & n \text{ odd},\\[2mm]
\mathbb Z/2\mathbb Z\oplus \mathbb Z/2\mathbb Z, & n \text{ even}.
\end{cases}
\]
In particular,
\[
|\det(M_{D_n})|=4,
\qquad
|\Lambda^\vee/\Lambda|=4,
\qquad
|E|=4.
\]

For the exceptional types one has
\[
|\det(M_{E_6})|=3,
\qquad
|\det(M_{E_7})|=2,
\qquad
|\det(M_{E_8})|=1.
\]
Hence
\[
\Lambda^\vee/\Lambda \cong \mathbb Z/3\mathbb Z \quad (E_6),
\qquad
\Lambda^\vee/\Lambda \cong \mathbb Z/2\mathbb Z \quad (E_7),
\qquad
\Lambda^\vee/\Lambda =0 \quad (E_8),
\]
and therefore
\[
|E|=3 \ \text{for } E_6,
\qquad
|E|=2 \ \text{for } E_7,
\qquad
|E|=1 \ \text{for } E_8.
\]

The case $E_8$ is especially instructive.  Although the singularity is highly nontrivial,
its exceptional lattice is unimodular, so
\[
\Lambda^\vee/\Lambda=0.
\]
Hence Theorem~\ref{thm:resolution-realization} gives
\[
E=0.
\]
Thus the two integral middle extensions coincide locally in this case.  This shows that
the obstruction studied in the present paper is not a measure of complexity of the
singularity as such, but rather of the discriminant of the relevant integral lattice.

On the monodromy side, the same simple-singularity identification with the Coxeter
transformation gives
\[
|\det(T-\id)|=4 \ \text{for } D_n,
\qquad
|\det(T-\id)|=3,2,1 \ \text{for } E_6,E_7,E_8.
\]
Thus the determinant values agree with the discriminants of the corresponding root
lattices.

\begin{remark}
The ADE examples are the natural first test cases, but they do not exhaust the rational
surface case.  We therefore record a non-ADE rational quotient singularity that will also
serve as a model for the rational codimension-two comparison discussed in Section~2.
\end{remark}

\subsection{A non-ADE rational quotient singularity: \texorpdfstring{$\frac{1}{5}(1,2)$}{1/5(1,2)}}

We now consider the cyclic quotient surface singularity
\[
(X,0):=\mathbb C^2/\tfrac{1}{5}(1,2).
\]
This example is especially useful in the present context.  It shows that the obstruction
group $E$ is not confined to the rational double point case, and it also provides a
convenient model for the rational codimension-two comparison discussed in Section~2.  More
generally, it is the first member of the broader family of cyclic quotient singularities
\[
\mathbb C^2/\tfrac{1}{n}(1,q),
\qquad
\gcd(n,q)=1,
\]
which provides a natural testing ground for the rational codimension-two gluing picture of
Section~2.

\subsubsection*{Topological realization}

It is classical that the link of the cyclic quotient singularity
\[
\mathbb C^2/\tfrac{1}{n}(1,q)
\]
is the lens space $L(n,q)$.  Hence in the present case one has
\[
L\cong L(5,2).
\]
Therefore
\[
H_1(L,\mathbb Z)\cong \mathbb Z/5\mathbb Z.
\]
Since $L$ is a closed oriented $3$-manifold, Lemma~\ref{lem:torsion-second-link} gives
\[
H^2(L,\mathbb Z)_{\tors}\cong H_1(L,\mathbb Z)_{\tors}
\cong \mathbb Z/5\mathbb Z.
\]
By Theorem~\ref{thm:topological-model-E}, it follows that
\[
E\cong H^2(L,\mathbb Z)_{\tors}\cong \mathbb Z/5\mathbb Z.
\]

\subsubsection*{Resolution-theoretic realization}

The minimal resolution of a cyclic quotient surface singularity is described by the
Hirzebruch--Jung continued fraction.  In the present case,
\[
\frac{5}{2}=3-\frac{1}{2}=[3,2].
\]
Accordingly, the exceptional divisor consists of a chain of two smooth rational curves
\[
E_1\cup E_2
\]
with intersection data
\[
E_1^2=-3,
\qquad
E_2^2=-2,
\qquad
E_1\cdot E_2=1.
\]
Thus the exceptional lattice is generated by $[E_1],[E_2]$, and its intersection matrix is
\[
M=
\begin{pmatrix}
-3 & 1\\
1 & -2
\end{pmatrix}.
\]
Its determinant is
\[
\det(M)=(-3)(-2)-1=5.
\]
Therefore
\[
|\det(M)|=5.
\]
By Theorem~\ref{thm:resolution-realization},
\[
\Lambda^\vee/\Lambda\cong \mathbb Z/5\mathbb Z,
\qquad
|E|=5.
\]

\subsubsection*{Comparison of the two realizations}

The topological and resolution-theoretic computations agree:
\[
E
\cong
H^2(L,\mathbb Z)_{\tors}
\cong
\Lambda^\vee/\Lambda
\cong
\mathbb Z/5\mathbb Z.
\]
Equivalently,
\[
|E|=|H_1(L,\mathbb Z)|=|\det(M)|=5.
\]

This example should be compared with the rational bridge of Section~2.  In the present
quotient case, the topological contribution
\[
H^2(L,\mathbb Z)_{\tors}\cong \mathbb Z/5\mathbb Z
\]
and the resolution-theoretic contribution
\[
\Lambda^\vee/\Lambda\cong \mathbb Z/5\mathbb Z
\]
are both completely explicit, so the singularity provides a concrete non-ADE rational model
for the local integral defect measured by $E$.

\begin{remark}
The singularity $\mathbb C^2/\tfrac{1}{5}(1,2)$ is the simplest non-ADE rational example in
the paper with nontrivial obstruction group.  Because both its link and its Hirzebruch--Jung
resolution are completely explicit, it is a particularly convenient model for any further
study of the rational codimension-two comparison between ordinary and dual integral middle
extensions.  More generally, the family
\[
\mathbb C^2/\tfrac{1}{n}(1,q)
\]
provides a natural laboratory for extending the rational bridge of Section~2 into a fuller
local gluing theorem.
\end{remark}

\subsection{A useful non-ADE family: \texorpdfstring{$x^2+y^3+z^m$}{x2+y3+zm} with \texorpdfstring{$\gcd(m,6)=1$}{gcd(m,6)=1}}

We now turn to a non-ADE family.  Consider the Brieskorn--Pham surface singularity
\[
X_m:=\{(x,y,z)\in \mathbb C^3 \mid x^2+y^3+z^m=0\},
\qquad
m\ge 7,
\qquad
\gcd(m,6)=1.
\]
For $m=5$ one recovers the simple singularity $E_8$, so the range $m\ge 7$ gives
genuinely non-ADE examples.

The link is the Brieskorn manifold
\[
L_m=\Sigma(2,3,m).
\]
For pairwise coprime integers $a,b,c$, the Brieskorn manifold $\Sigma(a,b,c)$ is a Seifert
fibered rational homology sphere, and the standard Seifert homology computation gives
\[
|H_1(\Sigma(a,b,c),\mathbb Z)|=|ab+ac+bc-abc|.
\]
Applying this to $(a,b,c)=(2,3,m)$ yields
\[
|H_1(L_m,\mathbb Z)|=\bigl|\,2\cdot 3 + 2m + 3m - 2\cdot 3 \cdot m\,\bigr|
=|6-m|.
\]
Since $L_m$ is an oriented closed $3$-manifold, Lemma~\ref{lem:torsion-second-link} yields
\[
H^2(L_m,\mathbb Z)_{\tors}\cong H_1(L_m,\mathbb Z),
\]
so
\[
|E|=|H^2(L_m,\mathbb Z)_{\tors}|=|m-6|.
\]
Thus this family provides a large supply of non-ADE examples in which the local
obstruction is completely explicit.

We record two useful specializations.

\subsubsection*{The vanishing example \texorpdfstring{$m=7$}{m=7}}

For
\[
X_7=\{x^2+y^3+z^7=0\},
\]
the link is $\Sigma(2,3,7)$, and the above formula gives
\[
|H_1(L_7,\mathbb Z)|=|7-6|=1.
\]
Hence $L_7$ is an integral homology sphere, so
\[
H_1(L_7,\mathbb Z)=0,
\qquad
H^2(L_7,\mathbb Z)=0.
\]
Therefore
\[
E=0.
\]
Equivalently,
\[
\Lambda^\vee/\Lambda=0,
\qquad
|\det(M)|=1.
\]
Thus the singularity is non-ADE, yet the local integral obstruction vanishes.

\subsubsection*{A non-ADE example with nonzero obstruction: \texorpdfstring{$m=11$}{m=11}}

For
\[
X_{11}=\{x^2+y^3+z^{11}=0\},
\]
the link is $\Sigma(2,3,11)$, and the same formula gives
\[
|H_1(L_{11},\mathbb Z)|=|11-6|=5.
\]
Hence
\[
H^2(L_{11},\mathbb Z)_{\tors}\cong \mathbb Z/5\mathbb Z.
\]
By Theorem~\ref{thm:topological-model-E},
\[
E\cong \mathbb Z/5\mathbb Z.
\]
Then Theorem~\ref{thm:resolution-realization} gives
\[
\Lambda^\vee/\Lambda\cong \mathbb Z/5\mathbb Z,
\qquad
|\det(M)|=5.
\]
Whenever the rational-invertibility hypothesis of
Corollary~\ref{cor:monodromy-determinant} is satisfied, one also obtains
\[
|\det(T-\id)|=5.
\]

This is a useful example because it shows that nontrivial local obstruction is not a
special feature of the ADE case.  It also shows that the invariant $E$ varies in a
controlled way in the Brieskorn--Pham family.

\begin{remark}
The family $x^2+y^3+z^m$ with $\gcd(m,6)=1$ is useful because it exhibits three distinct
behaviors in one uniform framework:
\begin{itemize}
\item $m=5$: $E=0$ (the simple case $E_8$);
\item $m=7$: $E=0$ (a non-ADE vanishing example);
\item $m=11$: $E\cong \mathbb Z/5\mathbb Z$ (a non-ADE nonvanishing example).
\end{itemize}
\end{remark}

\subsection{Comparison table of realizations}

For ease of reference, we summarize the preceding computations in the following tables.
In addition to the ADE and Brieskorn--Pham examples, we include the cyclic quotient
singularity $\mathbb C^2/\tfrac{1}{5}(1,2)$ as a representative non-ADE rational example.
For every example listed, the topological model and the discriminant group agree by
Theorem~\ref{thm:resolution-realization}.  Moreover, whenever the hypothesis of
Corollary~\ref{cor:monodromy-determinant} is satisfied, the monodromy determinant column
is defined and agrees with the preceding invariants.

\begin{table}[ht]
\centering
\caption{Topological and perverse realizations of the local obstruction.}
\setlength{\tabcolsep}{5pt}
\renewcommand{\arraystretch}{1.15}
\begin{tabular}{|c|c|c|c|}
\hline
\textbf{Singularity} & \textbf{$H_1(L,\mathbb Z)$} & \textbf{$H^2(L,\mathbb Z)_{\tors}$} & \textbf{$E$} \\
\hline
$A_k$ & $\mathbb Z/(k+1)\mathbb Z$ & $\mathbb Z/(k+1)\mathbb Z$ & $\mathbb Z/(k+1)\mathbb Z$ \\
\hline
$D_n$ &
$\begin{cases}
\mathbb Z/4\mathbb Z, & n \text{ odd},\\
\mathbb Z/2\mathbb Z\oplus \mathbb Z/2\mathbb Z, & n \text{ even}
\end{cases}$ &
order $4$ &
order $4$ \\
\hline
$E_6$ & $\mathbb Z/3\mathbb Z$ & $\mathbb Z/3\mathbb Z$ & $\mathbb Z/3\mathbb Z$ \\
\hline
$E_7$ & $\mathbb Z/2\mathbb Z$ & $\mathbb Z/2\mathbb Z$ & $\mathbb Z/2\mathbb Z$ \\
\hline
$E_8$ & $0$ & $0$ & $0$ \\
\hline
$\mathbb C^2/\tfrac{1}{5}(1,2)$ & $\mathbb Z/5\mathbb Z$ & $\mathbb Z/5\mathbb Z$ & $\mathbb Z/5\mathbb Z$ \\
\hline
$x^2+y^3+z^7$ & $0$ & $0$ & $0$ \\
\hline
$x^2+y^3+z^{11}$ & $\mathbb Z/5\mathbb Z$ & $\mathbb Z/5\mathbb Z$ & $\mathbb Z/5\mathbb Z$ \\
\hline
\end{tabular}
\end{table}

\begin{table}[ht]
\centering
\caption{Resolution-theoretic and monodromy-theoretic realizations of the local obstruction.}
\setlength{\tabcolsep}{6pt}
\renewcommand{\arraystretch}{1.15}
\begin{tabular}{|c|c|c|c|}
\hline
\textbf{Singularity} & \textbf{$\Lambda^\vee/\Lambda$} & \textbf{$|\det(M)|$} & \textbf{$|\det(T-\id)|$} \\
\hline
$A_k$ & $\mathbb Z/(k+1)\mathbb Z$ & $k+1$ & $k+1$ \\
\hline
$D_n$ &
$\begin{cases}
\mathbb Z/4\mathbb Z, & n \text{ odd},\\
\mathbb Z/2\mathbb Z\oplus \mathbb Z/2\mathbb Z, & n \text{ even}
\end{cases}$ &
$4$ & $4$ \\
\hline
$E_6$ & $\mathbb Z/3\mathbb Z$ & $3$ & $3$ \\
\hline
$E_7$ & $\mathbb Z/2\mathbb Z$ & $2$ & $2$ \\
\hline
$E_8$ & $0$ & $1$ & $1$ \\
\hline
$\mathbb C^2/\tfrac{1}{5}(1,2)$ & $\mathbb Z/5\mathbb Z$ & $5$ & --- \\
\hline
$x^2+y^3+z^7$ & $0$ & $1$ & $1$ \\
\hline
$x^2+y^3+z^{11}$ & $\mathbb Z/5\mathbb Z$ & $5$ & $5$ \\
\hline
\end{tabular}
\end{table}

The tables should be read as dictionaries. Depending on the example, the most convenient
entry point may be the topology of the link, the resolution graph, or the monodromy of the
Milnor fiber. The content of the preceding theory is that all of these descriptions recover
the same local integral obstruction.


\section{Relation to factoriality and dual middle perversity}

The results of this paper are local: they isolate a finite integral obstruction attached to
a normal surface singularity.  Their motivation, however, comes from a broader picture in
which dual middle perversity enters naturally into divisor-theoretic questions.  We briefly
explain that context here.

\subsection{Why the dual middle-perversity correction matters}

Over $\mathbb Z$, ordinary middle perversity and dual middle perversity need not coincide.
This distinction disappears after tensoring with $\mathbb Q$, but it can persist integrally
in the form of finite local correction terms.  In the framework of Jung--Saito on
factoriality and $\mathbb Q$-factoriality, the dual middle-perversity package appears
naturally in the integral formulation of their criterion comparing factoriality and
$\mathbb Q$-factoriality.  The local surface case is the first place where the discrepancy
between ${}^pIC_X\mathbb Z$ and ${}^p_+IC_X\mathbb Z$ becomes visible.

The point is that torsion in the relevant local cohomology groups is invisible in the
rational theory but cannot be ignored integrally.  This is precisely the phenomenon measured
by the finite group
\[
E:=H^0({}^p_+IC_X\mathbb Z)_0.
\]
Thus $E$ is not an accidental artifact of the local surface case.  It is a concrete local
instance of the integral correction that dual middle perversity is designed to detect.

The main contribution of the present paper is to identify this correction explicitly.  The
group $E$ is defined perverse-sheaf theoretically, realized topologically through the link,
computed geometrically by the discriminant group of the exceptional lattice, and, in the
isolated hypersurface case, recovered from the torsion in the cokernel of the variation map
$T-\mathrm{id}$.  In particular, the dual middle-perversity correction is not merely formal:
it is encoded by classical invariants of the singularity.

\subsection{Local obstruction versus divisor-theoretic phenomena}

It is important to keep the scope of the present paper modest.  We do not prove new global
factoriality or $\mathbb Q$-factoriality criteria here.  Rather, we isolate and compute the
local integral obstruction that underlies the use of dual middle perversity in the surface
case.

In a global setting, questions of factoriality and $\mathbb Q$-factoriality measure the gap
between Cartier divisors and Weil divisors, or equivalently the failure of the divisor class
group to be generated by Cartier classes.  In the Jung--Saito framework, this gap is related
to the failure of certain integral intersection-complex packages to behave as they do
rationally.  The local surface singularity case shows that even when the rationalized
picture is clean, finite integral obstructions may remain.  The group $E$ isolates one such
obstruction.

Thus the present local theory should be viewed as complementary to divisor-theoretic
considerations rather than as a substitute for them.  It does not by itself determine
whether a global variety is factorial or $\mathbb Q$-factorial.  What it does show is that,
at a singular point, the discrepancy between the ordinary and dual integral middle-perversity
packages is measured by a finite discriminant-type invariant.

One may summarize the conceptual relevance as follows: divisor-theoretic failure is often
recorded globally through class groups, while the obstruction studied here is recorded
locally through discriminant data.  These are not the same objects, but they belong to
adjacent parts of the same integral picture.

\appendix

\section{Auxiliary algebra on lattices and finite abelian groups}

In this appendix we collect the elementary lattice-theoretic facts used in the main
text.  Nothing here is new, but fixing conventions in one place helps keep the arguments
in Sections~4 and~6 clean.

\subsection{Integral lattices and dual lattices}

By a lattice we mean a free abelian group $\Lambda$ of finite rank equipped with an
integral symmetric bilinear form
\[
(\ ,\ ):\Lambda\times \Lambda\to \mathbb Z.
\]
We shall always assume that the form is nondegenerate over $\mathbb Q$, so that the
induced homomorphism
\[
\iota:\Lambda\longrightarrow \Lambda^\vee:=\Hom_{\mathbb Z}(\Lambda,\mathbb Z),
\qquad
\lambda\longmapsto (\lambda,-),
\]
is injective with finite cokernel.

The dual lattice $\Lambda^\vee$ is regarded as containing $\Lambda$ via $\iota$.  The
quotient
\[
A_\Lambda:=\Lambda^\vee/\Lambda
\]
is the discriminant group of $\Lambda$.  It is a finite abelian group.

\begin{lemma}\label{lem:disc-group-finite}
Let $\Lambda$ be a lattice of rank $r$, and let $M$ be the matrix of the bilinear form in
some $\mathbb Z$-basis of $\Lambda$.  Then the discriminant group $A_\Lambda$ is finite,
and
\[
|A_\Lambda|=|\det(M)|.
\]
\end{lemma}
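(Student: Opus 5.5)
We will prove the lemma via the Smith normal form of $M$. Fix a $\mathbb Z$-basis $e_1,\dots,e_r$ of $\Lambda$ and the dual basis $e_1^\vee,\dots,e_r^\vee$ of $\Lambda^\vee$. In these bases the map $\iota:\Lambda\to\Lambda^\vee$, $\lambda\mapsto(\lambda,-)$, sends $e_i$ to $\sum_j (e_i,e_j)e_j^\vee$. Hence $\iota$ is represented by the Gram matrix $M$, up to transposition, which is harmless since $M$ is symmetric. This gives
\[
A_\Lambda\cong \coker\bigl(M:\mathbb Z^r\to\mathbb Z^r\bigr).
\]
The standing assumption that the form is nondegenerate over $\mathbb Q$ says exactly that $\det(M)\neq 0$.

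Next we choose $P,Q\in GL_r(\mathbb Z)$ with $PMQ=D=\mathrm{diag}(d_1,\dots,d_r)$, where the $d_i$ are nonzero integers. Their existence is the Smith normal form theorem over the PID $\mathbb Z$; the divisibility condition $d_i\mid d_{i+1}$ is not needed here. Since $P$ and $Q$ are automorphisms of $\mathbb Z^r$, they induce an isomorphism
\[
\coker(M)\cong\coker(D)\cong\bigoplus_{i=1}^r\mathbb Z/d_i\mathbb Z .
\]
This group is finite, of order $\prod_i|d_i|$.

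Finally, $\det P,\det Q\in\{\pm1\}$, so $|\det M|=|\det D|=\prod_i|d_i|=|A_\Lambda|$. The result is independent of the chosen basis, because a change of basis replaces $M$ by $S^{T}MS$ with $S\in GL_r(\mathbb Z)$, and this does not change $|\det M|$.

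The only step that needs any care is the bookkeeping in the first paragraph: checking that the matrix of $\iota$ with respect to a basis and its dual basis is really $M$. After that, the argument is the standard Smith normal form computation, and the only external input is the existence of the Smith normal form over $\mathbb Z$, which we will quote.
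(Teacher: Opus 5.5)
Your proposal is correct and follows the same route as the paper. You represent $\iota\colon\Lambda\to\Lambda^\vee$ by the Gram matrix $M$, use nondegeneracy over $\mathbb Q$ to get $\det M\neq 0$, and read off $|\coker(M)|=|\det M|$ from the Smith normal form, supplying the dual-basis bookkeeping and the diagonalization $PMQ=D$ that the paper compresses into the phrase ``standard Smith normal form computation.''
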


\begin{proof}
In the chosen basis, the map $\iota:\Lambda\to \Lambda^\vee$ is represented by the
integral matrix $M$.  Since the form is nondegenerate over $\mathbb Q$, $M$ is
invertible over $\mathbb Q$, hence the cokernel is finite.  Its order is the absolute
value of the determinant of $M$.
\end{proof}

\begin{remark}
The order of $A_\Lambda$ is independent of the chosen basis.  Indeed, replacing the basis
changes $M$ by $P^{\mathsf t}MP$ with $P\in \mathrm{GL}_r(\mathbb Z)$, and
\[
\det(P^{\mathsf t}MP)=\det(M).
\]
\end{remark}

\subsection{Discriminant forms and finite quotients}

Although the main text uses only the finite abelian group $A_\Lambda$, it is often useful
to remember that when the bilinear form on $\Lambda$ is integral and nondegenerate, the
inverse matrix $M^{-1}$ defines a $\mathbb Q/\mathbb Z$-valued pairing on
$A_\Lambda$.  We shall not use this pairing explicitly, but it explains why the
discriminant group is the natural finite receptacle for the failure of the lattice to be
unimodular.

A lattice is called unimodular if $\iota:\Lambda\to\Lambda^\vee$ is an isomorphism,
equivalently if
\[
|A_\Lambda|=1,
\]
or equivalently if $\det(M)=\pm 1$.

\begin{lemma}\label{lem:unimodular-equivalent}
For a lattice $\Lambda$, the following are equivalent:
\begin{enumerate}
\item[(i)] $\Lambda$ is unimodular;
\item[(ii)] $A_\Lambda=0$;
\item[(iii)] $|\det(M)|=1$ for one, hence every, integral Gram matrix $M$ of $\Lambda$.
\end{enumerate}
\end{lemma}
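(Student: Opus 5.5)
The plan is to prove the cycle of implications (i)$\Rightarrow$(ii)$\Rightarrow$(iii)$\Rightarrow$(i). Everything reduces to the fact that, in a chosen $\mathbb Z$-basis, the map $\iota:\Lambda\to\Lambda^\vee$ is represented by the Gram matrix $M$, together with Lemma~\ref{lem:disc-group-finite}.

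\textbf{(i)$\Leftrightarrow$(ii).} By definition, $\Lambda$ is unimodular if and only if $\iota$ is an isomorphism. Under the standing nondegeneracy assumption, $\iota$ is injective, so $\iota$ is an isomorphism exactly when it is surjective, that is, when $\coker(\iota)=\Lambda^\vee/\Lambda=A_\Lambda$ vanishes. This gives (i)$\Leftrightarrow$(ii) directly.

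\textbf{(ii)$\Leftrightarrow$(iii).} Fix a $\mathbb Z$-basis with Gram matrix $M$. Lemma~\ref{lem:disc-group-finite} gives $|A_\Lambda|=|\det(M)|$, so $A_\Lambda=0$ if and only if $|\det(M)|=1$. To get the ``one, hence every'' clause, use the remark following Lemma~\ref{lem:disc-group-finite}. Any other integral Gram matrix has the form $P^{\mathsf t}MP$ with $P\in\mathrm{GL}_r(\mathbb Z)$. Since $\det(P)=\pm1$, we get $\det(P^{\mathsf t}MP)=\det(P)^2\det(M)=\det(M)$, so the condition does not depend on the basis.

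\textbf{Main obstacle.} The only step that is not purely formal is the order formula $|\coker(M)|=|\det(M)|$ inside Lemma~\ref{lem:disc-group-finite}. That lemma is already established and may be assumed. If one wants to avoid it here, note that $|\det M|=1$ means $M^{-1}$ has integral entries, so $\iota$ is surjective. Conversely, if $\iota$ is bijective, then $M$ is invertible over $\mathbb Z$, so $\det M\cdot\det M^{-1}=1$ with both factors in $\mathbb Z$, forcing $\det M=\pm1$. Either way, nothing further is required.
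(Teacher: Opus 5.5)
Your proposal is correct and follows the paper's route: the paper simply declares the lemma immediate from Lemma~\ref{lem:disc-group-finite} (that is, from $|A_\Lambda|=|\det(M)|$), and you spell this out, with the ``one, hence every'' clause coming from the $P^{\mathsf t}MP$ remark. Your optional adjugate argument ($|\det M|=1$ if and only if $M$ is invertible over $\mathbb Z$) is a clean, self-contained way to avoid the Smith normal form count, but it is not needed.
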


\begin{proof}
This is immediate from Lemma~\ref{lem:disc-group-finite}.
\end{proof}

\subsection{Sign conventions for intersection matrices}

Let $(X,0)$ be a normal surface singularity, and let
\[
\pi:\widetilde X\to X
\]
be a resolution with exceptional curves $E_1,\dots,E_r$.  We use the convention that the
intersection matrix is
\[
M=(E_i\cdot E_j)_{1\le i,j\le r}.
\]
Thus the diagonal entries are the self-intersection numbers $E_i^2$, and the
off-diagonal entries are the pairwise intersection numbers.

By Mumford's theorem, when $\pi$ is a resolution of a normal surface singularity, the
intersection matrix $M$ is negative definite.  In particular,
\[
\det(M)\neq 0.
\]
Since only $|\det(M)|$ appears in the main text, the negative sign plays no role in the
order statements.  Nevertheless, it is convenient to keep track of the sign convention so
that the exceptional lattice is the actual geometric intersection lattice and not its
negative.

\begin{remark}
For rational double points of ADE type, the matrix $M$ is the negative of the
corresponding Cartan matrix.  Hence the discriminant group of the exceptional lattice is
the same as the discriminant group of the root lattice.
\end{remark}

\subsection{Finite cokernels of endomorphisms of free abelian groups}

We also use repeatedly the following elementary fact.

\begin{lemma}\label{lem:coker-det}
Let $\varphi:\mathbb Z^r\to \mathbb Z^r$ be an endomorphism such that
$\varphi\otimes_{\mathbb Z}\mathbb Q$ is an isomorphism.  Then
$\operatorname{coker}(\varphi)$ is finite and
\[
|\operatorname{coker}(\varphi)|=|\det(\varphi)|.
\]
\end{lemma}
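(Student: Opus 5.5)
We will reduce the statement to the Smith normal form of the integer matrix of $\varphi$. Fix the standard basis of $\mathbb Z^r$ and let $A\in M_r(\mathbb Z)$ be the matrix of $\varphi$. By the Smith normal form theorem over the principal ideal domain $\mathbb Z$, there are $P,Q\in \mathrm{GL}_r(\mathbb Z)$ with
\[
PAQ=D=\mathrm{diag}(d_1,\dots,d_r),\qquad d_i\in\mathbb Z,\ d_1\mid d_2\mid\cdots\mid d_r .
\]
Since $P$ and $Q$ are automorphisms of $\mathbb Z^r$, the cokernel of $A$ is isomorphic to the cokernel of $D$. Concretely, $Q$ reparametrizes the source and $P$ is an automorphism of the target carrying $\operatorname{im}(A)$ onto $\operatorname{im}(D)$. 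Hence
\[
\operatorname{coker}(\varphi)\cong \bigoplus_{i=1}^r \mathbb Z/d_i\mathbb Z .
\]

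Next we use the hypothesis. Because $\varphi\otimes_{\mathbb Z}\mathbb Q$ is an isomorphism, $\det(A)\neq 0$. Since $\det P,\det Q\in\{\pm1\}$, we get $\det D=\pm\det A\neq 0$, so every $d_i\neq 0$. Each summand $\mathbb Z/d_i\mathbb Z$ is then finite of order $|d_i|$. Therefore the cokernel is finite, with
\[
|\operatorname{coker}(\varphi)|=\prod_{i=1}^r |d_i| = |\det D| = |\det A| = |\det(\varphi)|.
\]
Finally, $\det(\varphi)$ does not depend on the choice of basis, because a change of basis conjugates $A$ by an element of $\mathrm{GL}_r(\mathbb Z)$.

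The only substantive input is the existence of the Smith normal form over $\mathbb Z$. We will cite it as standard; it can be proved by the usual Euclidean row and column reduction. No real obstacle is expected.

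As an alternative that avoids Smith normal form, one could argue by induction on $r$ using a filtration by coordinate subgroups. We regard the Smith normal form route as the cleanest and will use it.
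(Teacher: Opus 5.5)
Your proposal is correct and follows the same Smith normal form argument as the paper, which likewise computes the order of the cokernel as the product of the invariant factors, equal to $|\det(A)|$. You simply spell out the details the paper leaves implicit: that $P$ and $Q$ identify the two cokernels, that $\det A\neq 0$ forces every $d_i\neq 0$, and that $\det(\varphi)$ does not depend on the chosen basis.
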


\begin{proof}
Choose a basis of $\mathbb Z^r$ so that $\varphi$ is represented by an integral
$r\times r$ matrix $A$.  The hypothesis implies that $A$ is invertible over
$\mathbb Q$, hence $\operatorname{coker}(\varphi)$ is finite.  By the Smith normal form,
the order of the cokernel is the product of the invariant factors, which is equal to
$|\det(A)|$.
\end{proof}

\begin{remark}
Lemma~\ref{lem:coker-det} is the algebraic input behind the formulas
\[
|A_\Lambda|=|\det(M)|
\qquad\text{and}\qquad
|\operatorname{coker}(T-\mathrm{id})|=|\det(T-\mathrm{id})|.
\]
\end{remark}

\section{Conventions on monodromy and variation maps}

In this appendix we record the conventions used in Section~5 for Milnor fibers,
monodromy, and the integral variation map.  The statements in the main text depend only
on the finite cokernel of $T-\mathrm{id}$, so the purpose here is mostly to fix notation.

\subsection{Milnor fiber conventions}

Let
\[
f:(\mathbb C^3,0)\to (\mathbb C,0)
\]
be a holomorphic function germ with an isolated critical point at $0$, and let
\[
X=f^{-1}(0).
\]
Choose $0<\eta\ll \varepsilon\ll 1$, and define the Milnor fiber
\[
F:=f^{-1}(t)\cap B_\varepsilon
\]
for any $t\in D_\eta^\ast$.  We always orient $F$ by its complex structure.  Its boundary
is the link
\[
L:=X\cap \partial B_\varepsilon.
\]

Since $X$ has complex dimension $2$, the Milnor fiber is a compact oriented real
$4$-manifold with boundary $L$.  We use reduced cohomology in degree $2$ and write
\[
H^2_{\mathrm{van}}(F,\mathbb Z):=\widetilde H^2(F,\mathbb Z).
\]
For isolated hypersurface surface singularities this is a free abelian group of finite
rank equal to the Milnor number.

\subsection{Monodromy convention}

The Milnor monodromy is the automorphism
\[
T:H^2_{\mathrm{van}}(F,\mathbb Z)\to H^2_{\mathrm{van}}(F,\mathbb Z)
\]
induced by transport around a positively oriented simple loop around $0$ in the punctured
disk $D_\eta^\ast$.

We consistently use the operator
\[
T-\mathrm{id}
\]
as the integral variation map.  All determinant expressions of the form
\[
\det(T-\mathrm{id})
\]
refer to this operator acting on the free abelian group
$H^2_{\mathrm{van}}(F,\mathbb Z)$.

\begin{remark}
Replacing $T$ by $T^{-1}$ changes $T-\mathrm{id}$ by a sign-conjugate operator over
$\mathbb Q$, so all absolute-value determinant statements in the main text are unchanged.
\end{remark}

\subsection{Wang sequence convention}

For the Milnor fibration over the punctured disk, we use the cohomological Wang sequence
in the form
\[
\cdots\to H^k(L,\mathbb Z)\to H^k(F,\mathbb Z)
\overset{T-\mathrm{id}}{\longrightarrow}
H^k(F,\mathbb Z)\to H^{k+1}(L,\mathbb Z)\to\cdots .
\]
Since the reduced cohomology of $F$ is concentrated in degree $2$, the relevant part for
surface singularities is
\[
0\longrightarrow H^1(L,\mathbb Z)\longrightarrow H^2_{\mathrm{van}}(F,\mathbb Z)
\overset{T-\mathrm{id}}{\longrightarrow}
H^2_{\mathrm{van}}(F,\mathbb Z)\longrightarrow H^2(L,\mathbb Z)\longrightarrow 0.
\]
Thus we use the identifications
\[
H^1(L,\mathbb Z)\cong \ker(T-\mathrm{id}),
\qquad
H^2(L,\mathbb Z)\cong \operatorname{coker}(T-\mathrm{id}).
\]

These identifications are the only monodromy-theoretic input needed in the main text.

\subsection{Finite cokernel convention}

When we write that the obstruction group $E$ is identified with the finite group attached
to the integral variation map, we mean concretely that
\[
E\cong \operatorname{coker}(T-\mathrm{id})
\]
in the cases under consideration.  When
\[
(T-\mathrm{id})\otimes_{\mathbb Z}\mathbb Q
\]
is an isomorphism, this cokernel is finite, and Lemma~\ref{lem:coker-det} yields
\[
|E|=|\operatorname{coker}(T-\mathrm{id})|=|\det(T-\mathrm{id})|.
\]

\begin{remark}
The integral nature of the theory is essential here.  Even when $T-\mathrm{id}$ is
invertible over $\mathbb Q$, its cokernel over $\mathbb Z$ may be nontrivial.  This is
precisely the phenomenon measured by the local obstruction group $E$.
\end{remark}

\subsection{Compatibility with Picard--Lefschetz theory}

For simple hypersurface singularities, the vanishing cohomology lattice is identified
with the corresponding ADE root lattice, and the Milnor monodromy is the Coxeter
transformation.  Under these conventions, the determinant
\[
|\det(T-\mathrm{id})|
\]
agrees with the discriminant of the root lattice, hence with the order of the
discriminant group of the exceptional lattice in the minimal resolution.  This is the
normalization used throughout the examples section.
\section{Rational codimension-two gluing}\label{app:rational-codim2}

In this appendix we isolate a rational codimension-two comparison suggested by the discussion
of Section~2.  The point is to make precise, in the rational surface case, the relation
between the ordinary and dual integral middle extensions and the torsion-sensitive Deligne
sheaves of Friedman.  This yields a local gluing statement in which the discrepancy between
the two integral middle extensions is identified with the torsion group
\[
H^2(L,\mathbb Z)_{\tors},
\]
equivalently with the discriminant group of the exceptional lattice.

Throughout this appendix, $(X,0)$ denotes a rational normal complex surface germ with
isolated singular point, and
\[
j:U:=X\setminus\{0\}\hookrightarrow X
\]
denotes the inclusion of the smooth locus.  We write
\[
A:={}^{p}j_{!*}\mathbb Z_U[2],
\qquad
B:={}^{p}_{+}j_{!*}\mathbb Z_U[2].
\]
We let \(S=\{0\}\) denote the singular stratum and \(L\) the link of the singularity.

\subsection{Statement of the rational gluing theorem}

The goal of this appendix is the following theorem.

\begin{theorem}\label{thm:app-rational-codim2-gluing}
Let $(X,0)$ be a rational normal complex surface germ with isolated singular point, and let
\[
j:U:=X\setminus\{0\}\hookrightarrow X.
\]
Then the natural morphism
\[
u:A\longrightarrow B
\]
has cone supported at \(0\), and there is a canonical isomorphism
\[
\operatorname{Cone}(u)\cong i_*E
\]
for a finite abelian group \(E\) canonically identified with
\[
E\cong H^2(L,\mathbb Z)_{\tors}.
\]
Equivalently, by the resolution comparison of Section~4,
\[
\operatorname{Cone}(u)\cong i_*(\Lambda^\vee/\Lambda).
\]
\end{theorem}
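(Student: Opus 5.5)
The plan is to compute $A$ and $B$ explicitly as truncations of the local model $K:=Rj_*\mathbb Z_U[2]$ from Lemma~\ref{lem:stalk-Rj-star}, and then read off the cone. The natural guess, to be verified, is
\[
A\cong \tau_{\le -1}K,
\]
together with a torsion-tipped truncation $B$ sitting in a triangle
\[
\tau_{\le -1}K\longrightarrow B\longrightarrow i_*\bigl(H^0(i^*K)_{\tors}\bigr)\overset{+1}{\longrightarrow}.
\]
Here $H^0(i^*K)_{\tors}=H^2(L,\mathbb Z)_{\tors}$. Concretely, $B$ is the unique subobject of $\tau_{\le 0}K$ whose degree-zero stalk at $0$ is the torsion subgroup of $H^0(i^*K)$. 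This is Friedman's torsion-tipped truncation for $D\vec p(S)=(1,P(R))$, and Lemma~\ref{lem:bdd-friedman-identification} already tells us that these ts-Deligne sheaves are $A$ and $B$. To keep the argument self-contained I would still check directly that the two candidates are the BBD intermediate extensions.

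\textbf{Step 1: the candidate for $A$ lies in the heart.} Using the gluing triangle $i^!F\to i^*F\to i^*Rj_*j^*F$ and $i^*Rj_*j^*F=i^*K$, we get $i^!(\tau_{\le-1}K)\cong(\tau_{\ge0}i^*K)[-1]$. This is concentrated in degrees $\ge1$, while the stalk lives in degrees $-2,-1$. So the point-stratum conditions of Lemma~\ref{lem:point-stratum-bounds}(i) hold.

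\textbf{Step 2: the candidate for $B$ lies in the heart.} The stalk of $B$ lives in degrees $\le 0$, with $H^0$ torsion, so the ${}^p_+D^{\le0}$ conditions hold trivially. The costalk is the shifted cone of $i^*B\to i^*K$. Its $H^0$ vanishes and its $H^1$ is $H^2(L,\mathbb Z)/\tors$, which is torsion-free. So $H^k(i^!B)=0$ for $k\le 0$, and Lemma~\ref{lem:point-stratum-bounds}(ii) holds.

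\textbf{Step 3: minimality.} Next I would check that neither candidate has a nonzero subobject or quotient supported at $0$ in the relevant heart; this characterizes $j_{!*}$. For ${}^p$, a point-supported object $i_*G[0]$ with $G\neq0$ is in the heart. Maps into $\tau_{\le-1}K$ from it, or from it into a quotient, are ruled out by the vanishing of $H^0$ of the stalk and of $H^{\le0}$ of the costalk. For ${}^p_+$, the point objects in the heart are $i_*G$ with $G$ torsion-free, and $i_*T[-1]$ with $T$ finite. I expect the verification that the torsion tip produces no such sub- or quotient object to be the main obstacle. Keeping all of $H^0(i^*K)$, i.e.\ taking $\tau_{\le0}K$, would admit the free quotient $i_*(H^2(L)/\tors)$, while discarding the torsion would violate the costalk torsion-freeness in degree $0$. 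So the torsion part is forced, and this is exactly where the torsion-freeness of $H^0(i^!B)$ and the torsion condition on the stalk are used. I would argue via $\Hom$ computations using $i^*\dashv i_*\dashv i^!$ and the explicit stalks and costalks above.

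\textbf{Step 4: the cone.} The natural map $u$ is the inclusion $\tau_{\le-1}K\to B$, and the defining triangle identifies its cone with $i_*H^2(L,\mathbb Z)_{\tors}$, placed in the degree fixed by the triangle. Finiteness follows from Lemma~\ref{lem:torsion-second-link}. The identification with $\Lambda^\vee/\Lambda$ then follows from Proposition~\ref{prop:link-discriminant}. Rationality (Lemma~\ref{lem:rational-link-h1-torsion}) only simplifies the bookkeeping, since then $H^2(L,\mathbb Z)$ is entirely torsion. The argument above does not otherwise seem to need it.
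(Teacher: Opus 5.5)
Your proposal is correct, but it follows a different route from the paper. The paper never writes $A$ and $B$ as explicit truncations. It identifies them with Friedman's ts-Deligne sheaves $P_{\vec p}$ and $P_{D\vec p}$ by matching Friedman's shifted extended ts-perverse hearts with the BBD ordinary and dual hearts (Propositions~\ref{prop:app-ordinary-heart-match} and~\ref{prop:app-dual-heart-match}, which rest on Friedman's Definition~5.10, Proposition~5.12 and Corollary~5.20). It then reads off stalks from Friedman's local cone formula, compares them degree by degree, and uses rationality (Lemma~\ref{lem:app-rational-link-torsion}) to say that the whole degree-$0$ contribution $H^2(L,\mathbb Z)$ is torsion.

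You bypass Friedman entirely. From the stalks of $K=Rj_*\mathbb Z_U[2]$ and BBD's characterization of $j_{!*}$, you check that $A\cong\tau_{\le -1}K$ and that $B$ is the torsion-tipped truncation. Your ``unique subobject'' is made precise by defining $B$ as the fiber of the composite $\tau_{\le 0}K\to i_*H^2(L,\mathbb Z)\to i_*\bigl(H^2(L,\mathbb Z)/\tors\bigr)$. The octahedral axiom then gives the triangle $\tau_{\le -1}K\to B\to i_*H^2(L,\mathbb Z)_{\tors}\to$, so you obtain the cone together with the map, which the paper's stalk comparison leaves implicit. That this inclusion is the natural $u$ follows because $\Hom(A,B)\to\Hom(j^*A,j^*B)$ is bijective: $i^*A$ lives in degrees $\le -1$ and $i^!B$ in degrees $\ge 1$.

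Your route is more elementary. You are also right that it never uses rationality, so it proves $H^0(B)_0\cong H^2(L,\mathbb Z)_{\tors}$ and the single-degree concentration for every normal surface germ. That is exactly the input the main text otherwise borrows from Jung--Saito. The paper's route instead places the comparison inside Friedman's torsion-sensitive framework, where his duality theorem is available, at the cost of depending on the cited heart identification.

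One correction to your Step~3 heuristic: discarding the torsion does not violate a costalk condition in degree $0$. Your own Step~1 shows that $i^!\tau_{\le -1}K=(\tau_{\ge 0}i^*K)[-1]$ vanishes in degrees $\le 0$, so $\tau_{\le -1}K$ does lie in the ${}^p_+$ heart. What fails is minimality: $H^1(i^!\tau_{\le -1}K)=H^2(L,\mathbb Z)$ has torsion, so $i_*H^2(L,\mathbb Z)_{\tors}[-1]$ is a nonzero subobject.

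Likewise, the conditions that matter for $B$ are not about $H^0(i^!B)$. They are:
\begin{itemize}
\item torsion-freeness of $H^1(i^!B)$, which kills $\Hom(i_*T[-1],B)=\Hom(T,H^1(i^!B))$;
\item torsion of $H^0(i^*B)$, which kills $\Hom(B,i_*G)=\Hom(H^0(i^*B),G)$ for $G$ free.
\end{itemize}
Equivalently, $i^*B\in{}^p_+D^{\le -1}$ and $i^!B\in{}^p_+D^{\ge 1}$ on the point. Both facts are already in your Step~2, so the step you flagged as the main obstacle is a two-line check.
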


\begin{remark}
The normalization in this appendix follows the torsion-sensitive Deligne-sheaf comparison
used in Proposition~\ref{prop:rational-ts-bridge}; in particular the cone appears here as
\(i_*E\).  This differs by a shift from the normalization used in the full general
Jung--Saito statement of Proposition~\ref{prop:self-dual-triangle}.  The two conventions
encode the same local finite group, but package it differently as a point-supported complex.
\end{remark}

\subsection{Rational link torsion}

We begin with the topological input on the link.

\begin{lemma}\label{lem:app-rational-link-torsion}
Let $(X,0)$ be a rational normal complex surface singularity with link \(L\). Then
\[
H^1(L,\mathbb Z)
\]
is finite torsion. Consequently,
\[
H^2(L,\mathbb Z)=H^2(L,\mathbb Z)_{\tors}.
\]
\end{lemma}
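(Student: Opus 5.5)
The plan follows the argument of Lemma~\ref{lem:rational-link-h1-torsion}, but I would supply an independent justification of $b_1(L)=0$ instead of simply asserting it. The key input is this: for a rational singularity, every exceptional curve of a resolution is smooth rational and the dual graph is a tree. I would take a good resolution $\pi:\widetilde X\to X$ with simple normal crossing exceptional divisor $E_\pi=\bigcup E_i$; this is the standard Artin characterization of rationality via $p_g=0$. Then $H_1(E_\pi,\mathbb Z)$ has rank $2\sum g(E_i)+b_1(\Gamma)=0$, where $\Gamma$ is the dual graph. By Lemma~\ref{lem:H1N-free}, $H_1(E_\pi,\mathbb Z)$ is torsion-free, so in fact $H_1(N,\mathbb Z)\cong H_1(E_\pi,\mathbb Z)=0$. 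This holds for the tubular neighborhood $N$, which deformation retracts to $E_\pi$ by Lemma~\ref{lem:H2N-is-lattice}(i).

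Next I would extract $b_1(L)=0$ from the exact sequence of the pair $(N,L)$ used in Lemma~\ref{lem:torsion-boundary-discriminant}:
\[
H_2(N,\mathbb Z)\xrightarrow{\beta} H_2(N,L;\mathbb Z)\xrightarrow{\gamma} H_1(L,\mathbb Z)\xrightarrow{\delta} H_1(N,\mathbb Z)=0 .
\]
From this, $H_1(L,\mathbb Z)\cong\coker\beta$. By Lemma~\ref{lem:PL-identification} and Step~2 of Lemma~\ref{lem:torsion-boundary-discriminant}, $\beta$ is the lattice map $\Lambda\to\Lambda^\vee$ given by the intersection matrix $M$. That proof works for any resolution, and in particular for a good one. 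Mumford's negative definiteness then gives $\det M\neq 0$, so $H_1(L,\mathbb Z)\cong\Lambda^\vee/\Lambda$ is finite. Poincar\'e duality on the closed oriented $3$-manifold $L$ gives $H^2(L,\mathbb Z)\cong H_1(L,\mathbb Z)$, which is therefore finite. It also gives $H^1(L,\mathbb Z)\cong H_2(L,\mathbb Z)$. By universal coefficients, $H^1(L,\mathbb Z)\cong\Hom(H_1(L,\mathbb Z),\mathbb Z)=0$, so $H^1(L,\mathbb Z)$ is in particular finite torsion. Finally, since $H^2(L,\mathbb Z)$ is finite, $H^2(L,\mathbb Z)=H^2(L,\mathbb Z)_{\tors}$, which is the second claim.

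The main obstacle is the first step: justifying that rationality forces a tree of smooth rational curves. I would cite Artin's criterion in the form $p_g=\dim R^1\pi_*\mathcal O_{\widetilde X}=0$. Then I would argue that $H^1(\mathcal O_{Z})=0$ for every effective cycle $Z$ supported on $E_\pi$. Taking $Z=E_\pi$ reduced, the Hodge-theoretic bound $b_1(E_\pi)\le 2h^1(\mathcal O_{E_\pi})=0$ gives the vanishing of $H_1(E_\pi,\mathbb Q)$ directly, without separating genus and cycle contributions. If one prefers to avoid this input entirely, I would instead fall back on the citation already used in Lemma~\ref{lem:rational-link-h1-torsion}, namely that rational links are rational homology spheres, and run only the Poincar\'e duality and universal coefficients part.
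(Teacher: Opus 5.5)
Your argument is correct, but it takes a genuinely different route from the paper's.

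The paper's proof cites the fact that the link of a rational singularity is a rational homology sphere ($b_1(L)=0$). It then does only the universal-coefficient and Poincar\'e-duality bookkeeping that you carry out in your last step.

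You instead prove that cited fact. From $R^1\pi_*\mathcal O_{\widetilde X}=0$ you get $h^1(\mathcal O_{E_{\mathrm{red}}})=0$. For a good resolution this number equals $\sum g(E_i)+b_1(\Gamma)$, so the general Hodge-theoretic bound is not even needed, and you obtain $H_1(N,\mathbb Z)=0$. The pair sequence of $(N,L)$, the Poincar\'e--Lefschetz identification of Section~4 (whose proofs never use minimality, so they apply to a good resolution), and Mumford's negative definiteness then give $H_1(L,\mathbb Z)\cong\Lambda^\vee/\Lambda$, which is finite.

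Your route buys two things. First, self-containment: the only external input is the genus-zero, tree structure forced by $p_g=0$, and you reuse the paper's own Section~4 lemmas. Second, a sharper conclusion. You get $H^1(L,\mathbb Z)=\Hom(H_1(L,\mathbb Z),\mathbb Z)=0$, which is all that ``finite torsion'' can mean for the always torsion-free group $H^1$. You also show that all of $H^2(L,\mathbb Z)$, not just its torsion, is the discriminant group. In particular, your computation shows that the paper's later remark claiming $H^1(\mathbb{RP}^3,\mathbb Z)\cong\mathbb Z/2\mathbb Z$ is mistaken; it is $H^2$ that equals $\mathbb Z/2\mathbb Z$. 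The paper's version is shorter but rests entirely on the cited fact.

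A minor point of terminology: $p_g=0$ is the definition of rationality. Artin's criterion is the condition $p_a(Z)\le 0$ for all positive cycles $Z$.
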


\begin{proof}
For a rational normal surface singularity, the link \(L\) is a rational homology sphere in
degree \(1\), equivalently \(b_1(L)=0\). Thus \(H^1(L,\mathbb Z)\) has no free part, so it is
finite torsion.

Since \(L\) is a connected oriented closed real \(3\)-manifold, the universal coefficient
theorem gives
\[
H^2(L,\mathbb Z)\cong \Hom(H_2(L,\mathbb Z),\mathbb Z)\oplus
\Ext^1(H_1(L,\mathbb Z),\mathbb Z).
\]
By Poincar\'e duality,
\[
H_2(L,\mathbb Z)\cong H^1(L,\mathbb Z),
\]
hence \(H_2(L,\mathbb Z)\) is also finite torsion. Therefore
\[
\Hom(H_2(L,\mathbb Z),\mathbb Z)=0.
\]
It follows that
\[
H^2(L,\mathbb Z)\cong \Ext^1(H_1(L,\mathbb Z),\mathbb Z),
\]
so \(H^2(L,\mathbb Z)\) is finite torsion as well. Hence
\[
H^2(L,\mathbb Z)=H^2(L,\mathbb Z)_{\tors}.
\]
\end{proof}

\begin{remark}
Lemma~\ref{lem:app-rational-link-torsion} is the exact rational correction needed in the
codimension-two comparison.  In general one cannot assert \(H^1(L,\mathbb Z)=0\); for
example, for the \(A_1\) singularity one has \(L\cong \mathbb{RP}^3\) and
\(H^1(L,\mathbb Z)\cong \mathbb Z/2\mathbb Z\).  What holds in the rational case is that
\(H^1(L,\mathbb Z)\) is purely torsion, and this is the property used below.
\end{remark}

\subsection{Identification with Friedman's torsion-sensitive Deligne sheaves}

We now compare the ordinary and dual integral middle extensions with Friedman's
torsion-sensitive Deligne sheaves.

Let
\[
\vec p(S)=(1,\emptyset)
\]
and let
\[
D\vec p(S)=(1,P(R))
\]
be the complementary torsion-sensitive perversity.  Since there is only one singular stratum,
these choices determine the corresponding torsion-sensitive Deligne sheaves
\[
P_{\vec p},
\qquad
P_{D\vec p}.
\]

The crucial point is that in codimension two these two choices match the ordinary and dual
integral middle-perversity conditions.

\begin{proposition}\label{prop:app-ordinary-heart-match}
Let \(X\) be the isolated codimension-two point-stratum situation considered here, with
smooth locus \(j:U\hookrightarrow X\) and singular stratum \(S=\{0\}\).  Let
\[
\vec p(S)=(1,\emptyset).
\]
Then the shifted extended torsion-sensitive perversity \(\vec p_E^{\, +}\) of
\cite[Definition~5.10]{FriedmanGenIH} has empty prime-set component on every stratum, and
hence the ts-perverse heart \(\vec p_E^{\, +}D^\heartsuit(X)\) is the standard BBD perverse
heart.  Consequently,
\[
P_{\vec p}\cong {}^{p}j_{!*}\mathbb Z_U[2].
\]
\end{proposition}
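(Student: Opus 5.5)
The plan is to unwind Friedman's definitions on the single singular stratum and compare them directly with the BBD point-stratum conditions recorded in Lemma~\ref{lem:point-stratum-bounds}.

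\textbf{Step 1: compute the shifted extended perversity.} Starting from $\vec p(S)=(1,\emptyset)$, we apply \cite[Definition~5.10]{FriedmanGenIH}. The extension step assigns to the regular stratum $U$ the value $(0,\emptyset)$. The shift $\vec p_E^{\,+}$ only changes the numerical component, by the normalization that converts Friedman's Deligne-sheaf degree conventions into perverse degree conventions. It leaves the prime-set component untouched. So every stratum carries the empty set of primes, which is the first assertion. The only thing to check is that the shift does not alter the prime sets, and this is immediate from the definition.

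\textbf{Step 2: identify the heart.} Friedman's ts-$t$-structure imposes the following conditions on a stratum $S$ with data $(\vec p_1(S),\vec p_2(S))$:
\begin{itemize}
\item vanishing of stalk cohomology above a critical degree;
\item in the critical degree itself, the requirement that the stalk cohomology be $\vec p_2(S)$-torsion;
\item dually on costalks, vanishing below the critical degree and $\vec p_2(S)$-torsion-freeness in the critical degree.
\end{itemize}
When $\vec p_2(S)=\emptyset$, ``$\emptyset$-torsion'' means zero and ``$\emptyset$-torsion-free'' is vacuous. The conditions therefore collapse to $H^k(i_S^*F)=0$ for $k>-d_S$ and $H^k(i_S^!F)=0$ for $k<-d_S$. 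For $S=\{0\}$ these are exactly the conditions of Lemma~\ref{lem:point-stratum-bounds}(i), and on $U$ they are the usual conditions for a shifted local system. This is consistent with \cite[Proposition~5.12]{FriedmanGenIH}. Hence $\vec p_E^{\,+}D^\heartsuit(X)={}^p\!\operatorname{Perv}(X,\mathbb Z)$.

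\textbf{Step 3: identify the intermediate extension.} Friedman's ts-Deligne sheaf $P_{\vec p}$ restricts to $\mathbb Z_U[2]$ on $U$. By \cite[Theorem~4.8]{FriedmanGenIH}, it is characterized by ts-truncation axioms, which in the empty-prime case reduce to
\[
P_{\vec p}\cong \tau_{\le -1}Rj_*\mathbb Z_U[2].
\]
The BBD intermediate extension in the heart identified in Step 2 also admits the description
\[
{}^pj_{!*}\mathbb Z_U[2]=\tau_{\le -1}Rj_*\mathbb Z_U[2]
\]
for a point stratum, by \cite[Proposition~2.1.11]{BBD82}. Matching the two descriptions gives $P_{\vec p}\cong{}^pj_{!*}\mathbb Z_U[2]$. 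Alternatively, we can argue by uniqueness: $P_{\vec p}$ is an object of the common heart with no nonzero subobjects or quotients supported at $0$.

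\textbf{Main obstacle.} I expect the delicate step to be the bookkeeping of the degree shift in Step 1: the critical degree for $S$ must come out as $-1$ (stalk truncation at $\tau_{\le -1}$), not $0$. I would verify this on the $A_1$ example. There $H^0(i_0^*Rj_*\mathbb Z_U[2])=H^2(\mathbb{RP}^3)=\mathbb Z/2$ must be killed in the ordinary extension, which confirms the truncation at degree $-1$.
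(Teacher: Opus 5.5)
Your proposal is correct, and Step~1 is the same computation as in the paper: the prime set on $U$ is empty for $\mathbb Z_U[2]$, and $\vec p_E^{\,+}(S)=(2,\emptyset)$. The differences are in Steps~2 and~3.

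\textbf{Step 2 (the heart).} The paper simply quotes Friedman's remark \cite[p.~34]{FriedmanGenIH} that empty prime-set components on all strata give the standard perverse $t$-structure. You instead unwind the ts stalk and costalk conditions and check that they collapse to Lemma~\ref{lem:point-stratum-bounds}(i). When you do this, say explicitly that the stalk critical degree is $-d_S+1$ while the costalk critical degree is $-d_S$. With a single common ``critical degree'' for both, the collapse would come out off by one on the costalk side.

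\textbf{Step 3 (the final isomorphism).} The paper invokes \cite[Proposition~5.12]{FriedmanGenIH}, which says $P_{\vec p}$ is the intermediate extension in the ts-heart, and transports this through the heart identification. You instead match explicit formulas:
\begin{itemize}
\item With empty prime set, the torsion-tipped truncation is an ordinary truncation, so $P_{\vec p}\cong\tau_{\le -1}Rj_*\mathbb Z_U[2]$. This follows most directly from the definition \cite[Definition~4.4]{FriedmanGenIH} rather than from the axiomatic Theorem~4.8.
\item \cite[Proposition~2.1.11]{BBD82} gives the same formula for ${}^pj_{!*}\mathbb Z_U[2]$.
\end{itemize}

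\textbf{What each route buys.} Your route makes the last isomorphism independent of both the heart identification and Proposition~5.12. It also exposes the degree normalization, which your $A_1$ check confirms. The paper's route is shorter and purely formal, and never needs the explicit one-stratum truncation formula. Your ``uniqueness'' alternative is essentially the paper's argument, since the fact that $P_{\vec p}$ has no nonzero sub- or quotient objects supported at $0$ is the content of Proposition~5.12.
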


\begin{proof}
By Friedman's Definition~5.10, the shifted extended ts-perversity \(\vec p_E^{\, +}\) is
defined by
\[
\vec p_E^{\, +}(Z)=
\begin{cases}
(0,\wp(Z,E)), & Z \text{ regular},\\
(\vec p_1(Z)+1,\vec p_2(Z)), & Z \text{ singular}.
\end{cases}
\]
In the present situation the coefficient system on \(U\) is \(\mathbb Z_U[2]\), so the
regular-stratum prime-set is empty. Since \(\vec p(S)=(1,\emptyset)\), one has
\[
\vec p_E^{\, +}(S)=(2,\emptyset).
\]
Thus every stratum has empty prime-set component.

Friedman states that if \(\tilde p_2(Z)=\emptyset\) for all strata \(Z\), then the
corresponding ts-perverse heart \(\tilde p D^\heartsuit(X)\) is the standard perverse
\(t\)-structure; see \cite[p.~34]{FriedmanGenIH}. Therefore
\[
\vec p_E^{\, +}D^\heartsuit(X)
=
{}^{p}\!\operatorname{Perv}(X,\mathbb Z).
\]

By Friedman's Proposition~5.12, the ts-Deligne sheaf \(P_{\vec p}\) is the intermediate
extension of the coefficient system \(\mathbb Z_U[2]\) in the heart
\(\vec p_E^{\, +}D^\heartsuit(X)\). Since this heart is the standard BBD perverse heart, its
intermediate extension is exactly the ordinary middle-perversity intermediate extension.
Hence
\[
P_{\vec p}\cong {}^{p}j_{!*}\mathbb Z_U[2].
\]
\end{proof}

\begin{proposition}\label{prop:app-dual-heart-match}
In the same isolated codimension-two point-stratum situation, let
\[
D\vec p(S)=(1,P(R)).
\]
Then the shifted extended torsion-sensitive perversity \((D\vec p)_E^{\, +}\) determines the
dual ts-perverse heart to \(\vec p_E^{\, +}D^\heartsuit(X)\), and consequently
\[
P_{D\vec p}\cong {}^{p}_{+}j_{!*}\mathbb Z_U[2].
\]
\end{proposition}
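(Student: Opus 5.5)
I will prove the proposition by mirroring the proof of Proposition~\ref{prop:app-ordinary-heart-match}, with duality as the organizing principle. The first step is to compute the shifted extended perversity from Friedman's Definition~5.10. On the regular stratum one again gets $(0,\emptyset)$, since the coefficient system is $\mathbb Z_U[2]$. On the singular stratum one gets $(D\vec p)_E^{\,+}(S)=(2,P(R))$, with $P(R)$ the set of all primes. Unwinding Friedman's ts-truncation conditions at the point stratum, membership in $(D\vec p)_E^{\,+}D^{\le 0}$ and $(D\vec p)_E^{\,+}D^{\ge 0}$ becomes an explicit stalk/costalk condition. Because every prime is retained, the stalk in the critical degree is allowed to carry all its torsion. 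The costalk in the critical degree must be torsion-free.

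The second step is to compare these conditions with the dual semi-perversity conditions of Jung--Saito, i.e.\ the point-stratum case $d_S=0$ recorded in Lemma~\ref{lem:point-stratum-bounds}(ii):
\begin{itemize}
\item $H^k(i^*F)=0$ for $k>1$, with $H^1(i^*F)$ torsion;
\item $H^k(i^!F)=0$ for $k<0$, with $H^0(i^!F)$ torsion-free.
\end{itemize}
One must check the degree bookkeeping carefully: Friedman's ``$\vec p_1+1$'' shift, together with the convention that the torsion part in degree $\vec p_1+1$ survives, has to land exactly on these bounds. On the regular stratum both sides reduce to the standard condition that a complex is a shifted local system, so they agree trivially. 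The two $t$-structures then coincide, hence so do their hearts: $(D\vec p)_E^{\,+}D^\heartsuit(X)={}^p_+\!\operatorname{Perv}(X,\mathbb Z)$.

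As a cross-check, and an alternative route if the direct unwinding is messy, I would use the duality theorem \cite[Theorem~4.19]{FriedmanGenIH}. It gives $\mathbb D P_{\vec p}\cong P_{D\vec p}$ for the dual coefficient system, and $\mathbb D(\mathbb Z_U[2])\cong\mathbb Z_U[2]$ since $U$ is an oriented real $4$-manifold. Combined with Proposition~\ref{prop:app-ordinary-heart-match}, this yields $P_{D\vec p}\cong \mathbb D({}^pj_{!*}\mathbb Z_U[2])$. Verdier duality exchanges ${}^pD^{\le0}$ with ${}^p_+D^{\ge0}$ and ${}^pD^{\ge0}$ with ${}^p_+D^{\le0}$, by the very definition of the dual $t$-structure in Section~2. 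It also commutes with $j^*$, and it sends the intermediate extension in one heart to the intermediate extension in the dual heart, because $\mathbb D$ interchanges $j_!$ and $Rj_*$ and reverses the image construction. Hence $\mathbb D({}^pj_{!*}\mathbb Z_U[2])\cong {}^p_+j_{!*}\mathbb D(\mathbb Z_U[2])\cong {}^p_+j_{!*}\mathbb Z_U[2]$. Finally, Proposition~5.12 of \cite{FriedmanGenIH} identifies $P_{D\vec p}$ with the intermediate extension in its heart, so the heart statement and the isomorphism are consistent.

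The main obstacle is the first step: matching Friedman's torsion-tipped truncation indices and prime-set conventions precisely with the Jung--Saito conditions (6.9)--(6.10), with no off-by-one error. For instance, one must confirm that ``retain all $P(R)$-torsion in degree $\vec p_1(S)+1$'' corresponds to the condition that $H^1(i^*F)$ is torsion, and not to a condition in degree $0$. For this reason I would lean on the duality route as the primary argument. The only external inputs there are Friedman's duality theorem and the compatibility of $\mathbb D$ with intermediate extension, and the heart identification then follows from the characterization of ${}^p_+$ by duality.
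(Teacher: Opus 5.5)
Your duality route to the isomorphism $P_{D\vec p}\cong{}^p_+j_{!*}\mathbb Z_U[2]$ is correct, but it is not the paper's argument. The paper works at the level of hearts:
\begin{itemize}
\item Friedman's Corollary~5.20 says that $\mathbb D$ carries $\vec p_E^{\,+}D^\heartsuit(X)$ anti-equivalently onto $D(\vec p_E^{\,+})D^\heartsuit(X)$.
\item Proposition~\ref{prop:app-ordinary-heart-match} identifies the source with the BBD heart, so the target is the ${}^p_+$-heart.
\item The bookkeeping identity $D(\vec p_E^{\,+})=(D\vec p)_E^{\,+}$ identifies that target with the $(D\vec p)_E^{\,+}$-heart.
\item Only then is Friedman's Proposition~5.12, applied to $D\vec p$, used to obtain the isomorphism.
\end{itemize}
You work with objects instead. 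Theorem~4.19 gives $P_{D\vec p}\cong\mathbb D P_{\vec p}$, Proposition~\ref{prop:app-ordinary-heart-match} gives $P_{\vec p}\cong{}^pj_{!*}\mathbb Z_U[2]$, and the formal BBD identity $\mathbb D\circ{}^pj_{!*}\cong{}^p_+j_{!*}\circ\mathbb D$ finishes the argument. That identity applies here because $\mathbb D$ maps ${}^p\operatorname{Perv}(U)$ into ${}^p_+\operatorname{Perv}(U)$ and fixes $\mathbb Z_U[2]$. Your route avoids both the computation of $D(\vec p_E^{\,+})$ and the second appeal to Proposition~5.12. It already gives everything that Corollary~\ref{cor:app-bdd-friedman-identification} uses later.

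What your route does not give is the first assertion of the statement: that the $(D\vec p)_E^{\,+}$-heart is the dual of the $\vec p_E^{\,+}$-heart. Your closing claim that this ``follows from the characterization of ${}^p_+$ by duality'' does not work. That characterization yields only ${}^p_+\operatorname{Perv}(X,\mathbb Z)=\mathbb D\bigl(\vec p_E^{\,+}D^\heartsuit(X)\bigr)$. Knowing that the single object $P_{D\vec p}$ lies in both this heart and the $(D\vec p)_E^{\,+}$-heart says nothing about whether the two hearts agree. You need a heart-level input, or else a completed Step~2. The heart-level input is that $\mathbb D$ carries the $\vec p_E^{\,+}$-heart onto the $(D\vec p)_E^{\,+}$-heart, i.e.\ Corollary~5.20 together with $D(\vec p_E^{\,+})=(D\vec p)_E^{\,+}$ on every stratum, which is the paper's argument.

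Step~2 as written also contains an error. On the open stratum ($d_S=2$), the conditions recalled in Section~2 describe ${}^p_+$-perverse objects as having $H^{-2}$ a torsion-free local system and $H^{-1}$ a \emph{torsion} local system. For example, $(\mathbb Z/2)_U[1]=\mathbb D\bigl((\mathbb Z/2)_U[2]\bigr)$ is ${}^p_+$-perverse. So ${}^p_+\operatorname{Perv}(U)$ is not the category of shifted local systems, and the comparison on $U$ is not trivial. It requires the regular-stratum prime set on Friedman's side to be all of $P(R)$ rather than $\emptyset$, and this has to be checked against Definition~5.10 for the dual coefficient system. This open-stratum issue does not affect $j_{!*}\mathbb Z_U[2]$, whose value depends only on the point-stratum conditions. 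That is one more reason your object-level argument is the safer one for the isomorphism.

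Finally, on your off-by-one worry: when unwinding at the point, read the conditions off $(D\vec p)_E^{\,+}(S)=(2,P(R))$, not off $D\vec p(S)=(1,P(R))$.
\begin{itemize}
\item The former puts the torsion condition on $H^1(i^*F)$.
\item The latter governs the stalk of $P_{D\vec p}$ itself and places its retained torsion, namely $E$, in degree $0$.
\end{itemize}
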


\begin{proof}
By Corollary~5.20 of \cite{FriedmanGenIH}, Verdier duality induces an equivalence
\[
\vec p_E^{\, +}D^\heartsuit(X)\xrightarrow{\sim}
D(\vec p_E^{\, +})D^\heartsuit(X)^{\mathrm{opp}}.
\]
By Proposition~\ref{prop:app-ordinary-heart-match}, the source heart is the standard BBD
perverse heart. Its dual under Verdier duality is therefore the BBD dual middle-perversity
heart.

It remains to identify the complementary shifted extended ts-perversity.  In the present
isolated codimension-two point-stratum situation there is only one singular stratum \(S\),
and the regular-stratum prime-set is empty.  Thus taking the complementary torsion-sensitive
perversity and then applying Definition~5.10 gives
\[
(D\vec p)_E^{\, +}(S)=(2,P(R)).
\]
On the other hand, \(D(\vec p_E^{\, +})\) is obtained by keeping the same codimension-two
integer part and complementing the prime-set at the unique singular stratum. Hence in this
special situation
\[
D(\vec p_E^{\, +})=(D\vec p)_E^{\, +}.
\]
Therefore the ts-perverse heart \((D\vec p)_E^{\, +}D^\heartsuit(X)\) is exactly the dual of
\(\vec p_E^{\, +}D^\heartsuit(X)\), hence it is the BBD dual middle-perversity heart.

Now apply Friedman's Proposition~5.12 to \(D\vec p\): the ts-Deligne sheaf \(P_{D\vec p}\)
is the intermediate extension of \(\mathbb Z_U[2]\) in the heart
\((D\vec p)_E^{\, +}D^\heartsuit(X)\). Since this heart is the BBD dual
middle-perversity heart, its intermediate extension is precisely
\[
{}^{p}_{+}j_{!*}\mathbb Z_U[2].
\]
Thus
\[
P_{D\vec p}\cong {}^{p}_{+}j_{!*}\mathbb Z_U[2].
\]
\end{proof}

\begin{corollary}\label{cor:app-bdd-friedman-identification}
In the isolated codimension-two point-stratum situation,
\[
{}^{p}j_{!*}\mathbb Z_U[2]\cong P_{\vec p},
\qquad
{}^{p}_{+}j_{!*}\mathbb Z_U[2]\cong P_{D\vec p},
\]
for
\[
\vec p(S)=(1,\emptyset),
\qquad
D\vec p(S)=(1,P(R)).
\]
\end{corollary}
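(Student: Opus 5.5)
The corollary simply combines Propositions~\ref{prop:app-ordinary-heart-match} and~\ref{prop:app-dual-heart-match}, so the plan is to read off each isomorphism from the corresponding proposition, inverting its direction. First, I take $\vec p(S)=(1,\emptyset)$. I invoke Proposition~\ref{prop:app-ordinary-heart-match}: the shifted extended perversity $\vec p_E^{\,+}$ has empty prime-set component on every stratum, so the ts-perverse heart is the standard BBD heart ${}^{p}\!\operatorname{Perv}(X,\mathbb Z)$. Friedman's Proposition~5.12 identifies $P_{\vec p}$ with the intermediate extension of $\mathbb Z_U[2]$ in that heart, which gives ${}^{p}j_{!*}\mathbb Z_U[2]\cong P_{\vec p}$.

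Second, I take $D\vec p(S)=(1,P(R))$ and invoke Proposition~\ref{prop:app-dual-heart-match}. There is a single singular stratum and the regular prime-set is empty, so $D(\vec p_E^{\,+})=(D\vec p)_E^{\,+}$. Verdier duality (Friedman's Corollary~5.20) then carries the BBD heart to the dual middle-perversity heart ${}^{p}_{+}\!\operatorname{Perv}(X,\mathbb Z)$, and Proposition~5.12 identifies $P_{D\vec p}$ with ${}^{p}_{+}j_{!*}\mathbb Z_U[2]$.

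The one point I would check, and the only potential obstacle, is that the two intermediate extensions are both taken of the same object $\mathbb Z_U[2]$ on the same open set $U$. I would also confirm that uniqueness of intermediate extension in a fixed heart makes the isomorphisms canonical, since both sides restrict to the identity on $U$. Once these are verified, the corollary follows immediately by combining the two steps.
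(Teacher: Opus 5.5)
Your proposal is correct and is essentially the paper's argument: the paper proves the corollary simply by combining Proposition~\ref{prop:app-ordinary-heart-match}, which gives ${}^{p}j_{!*}\mathbb Z_U[2]\cong P_{\vec p}$ for $\vec p(S)=(1,\emptyset)$, with Proposition~\ref{prop:app-dual-heart-match}, which gives ${}^{p}_{+}j_{!*}\mathbb Z_U[2]\cong P_{D\vec p}$ for $D\vec p(S)=(1,P(R))$. The extra checks you flag are already built in: both propositions concern the same object $\mathbb Z_U[2]$ on $U=X\setminus\{0\}$, and the corollary asserts only isomorphisms, so canonicity is not needed.
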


\begin{proof}
This is Proposition~\ref{prop:app-ordinary-heart-match} together with
Proposition~\ref{prop:app-dual-heart-match}.
\end{proof}

\begin{proposition}\label{prop:app-rational-ts-bridge}
Let $(X,0)$ be a rational normal complex surface germ with isolated singular point. Then
the natural morphism
\[
u:A\longrightarrow B
\]
has cone supported at \(0\), and
\[
\operatorname{Cone}(u)\cong i_*E,
\qquad
E\cong H^2(L,\mathbb Z)=H^2(L,\mathbb Z)_{\tors}.
\]
\end{proposition}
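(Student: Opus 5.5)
The plan is to bypass the abstract heart comparison and compute both intermediate extensions explicitly as truncations of the local model $K=Rj_*\mathbb Z_U[2]$, whose stalks at $0$ are given by Lemma~\ref{lem:stalks-of-K}. By Corollary~\ref{cor:app-bdd-friedman-identification}, $A\cong P_{\vec p}$ and $B\cong P_{D\vec p}$. Friedman's axiomatic characterization \cite[Theorem~4.8]{FriedmanGenIH}, together with the local cone formula \cite[Lemma~4.2]{FriedmanTsInv}, then describes these objects as torsion-tipped truncations of $K$ at the single stratum $S=\{0\}$.

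Concretely, I expect $A\cong\tau_{\le -1}K$. For $B$, I expect the torsion-tipped truncation that keeps $\tau_{\le -1}K$ and, in degree $0$, only the torsion subgroup $H^0(i^*K)_{\tors}\cong H^2(L,\mathbb Z)_{\tors}$.

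To make this independent of the precise form of Friedman's statements, I would verify directly that these two candidates satisfy the point-stratum conditions of Lemma~\ref{lem:point-stratum-bounds} together with the usual intermediate-extension conditions. The stalk side is immediate from Lemma~\ref{lem:stalks-of-K}. For the costalks I use the triangle
\[
i^!F\to i^*F\to i^*Rj_*j^*F=i^*K.
\]
For $F=A$, the map $i^*A\to i^*K$ is an isomorphism in degrees $\le -1$. Hence $i^!A$ is concentrated in degrees $\ge 1$, which gives the strict bound needed for $j_{!*}$. For the candidate $B$, the map $i^*B\to i^*K$ is an isomorphism in degrees $\le -1$ and the inclusion $H^2(L,\mathbb Z)_{\tors}\hookrightarrow H^2(L,\mathbb Z)$ in degree $0$. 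It follows that $H^k(i^!B)=0$ for $k\le 0$, that $H^1(i^!B)\cong H^2(L,\mathbb Z)/\tors$ is torsion-free, and that $H^2(i^!B)\cong\mathbb Z$. These are exactly the dual ${}^p_+$ conditions with the strictness needed for ${}^p_+j_{!*}$, namely no subobjects or quotients supported at $0$. Uniqueness of the intermediate extension then identifies $B$.

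With both objects in hand, the natural map $u$ is the inclusion of truncations induced by the identity on $U$. Its cone is supported at $0$ by Lemma~\ref{lem:cone-supported-at-zero}. By Lemma~\ref{lem:point-supported-star-shriek} it suffices to compute the cone on stalks. The long exact sequence shows that $\operatorname{Cone}(u)$ has cohomology only in degree $0$, equal to $H^2(L,\mathbb Z)_{\tors}$. Hence $\operatorname{Cone}(u)\cong i_*E$ in the appendix normalization, with $E\cong H^2(L,\mathbb Z)_{\tors}$. Finally, Lemma~\ref{lem:app-rational-link-torsion} uses rationality to give $H^2(L,\mathbb Z)=H^2(L,\mathbb Z)_{\tors}$, and Lemma~\ref{lem:torsion-second-link} gives finiteness.

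The main obstacle is the identification of $B$, that is, matching Friedman's torsion-tipped truncation with the explicit degree-$0$ torsion piece, including degree and shift conventions. The direct costalk verification above is my fallback to avoid relying on those conventions. In particular, I must check that $j_{!*}$ in the ${}^p_+$ heart really demands torsion-freeness of $H^1(i^!B)$, and not merely of $H^0(i^!B)$. I would settle this by checking that the candidate has no nonzero ${}^p_+$-perverse sub- or quotient object supported at $0$. Such a subobject has the form $i_*G$, where $G$ is a torsion group in degree $1$, or a group in degree $0$. A quotient would be detected by torsion in $H^1(i^!B)$.
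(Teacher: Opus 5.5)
Your proposal is correct. It shares the paper's skeleton: both realize $A$ and $B$ as truncations of $K=Rj_*\mathbb Z_U[2]$ at the point. These truncations agree in degrees $\le -1$ and differ only in degree $0$, where $B$ retains $H^2(L,\mathbb Z)_{\tors}$.

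The difference lies in how that identification is justified. The paper takes it entirely from Friedman. The heart matching of Propositions~\ref{prop:app-ordinary-heart-match}--\ref{prop:app-dual-heart-match}, together with the local cone formula, gives the stalks of $P_{\vec p}$ and $P_{D\vec p}$, and the degree $-1$ terms are then simply said to ``cancel''. You instead verify the gluing characterization of $j_{!*}$ directly: $i^*P\in D^{\le -1}$ and $i^!P\in D^{\ge 1}$ on the point, in the ${}^p$ and ${}^p_+$ normalizations respectively. You do this via the triangle $i^!F\to i^*F\to i^*K$, and then read off the cone from the long exact sequence.

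Your route buys three things.
\begin{enumerate}
\item[(a)] It is independent of Friedman's indexing. The paper silently absorbs the shift $[2]$: Friedman's critical degree $\vec p_1(S)+1=2$ is degree $0$ here.
\item[(b)] It proves explicitly that $u$ is an isomorphism on stalks in degrees $\le -1$ and injective in degree $0$, which the word ``cancel'' glosses over.
\item[(c)] It is more general. Nothing in your computation uses rationality until the last line. You therefore obtain $\operatorname{Cone}(u)\cong i_*H^2(L,\mathbb Z)_{\tors}$, placed in degree $0$, for every normal surface germ. With it you get $H^0(i^*B)\cong H^2(L,\mathbb Z)_{\tors}$ without appeal to Jung--Saito.
\end{enumerate}
The paper's route buys the placement of the statement inside the torsion-sensitive framework, which is the stated purpose of the appendix.

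A few small corrections to your last paragraph.
\begin{enumerate}
\item[(1)] The worry about $H^1(i^!B)$ is already settled by the characterization you used. On a point, ${}^p_+D^{\ge 1}$ means $H^k=0$ for $k<1$ and $H^1$ torsion-free, and your candidate satisfies exactly this. So no fallback is needed.
\item[(2)] In the fallback itself, sub and quotient are swapped. Torsion $\mathbb Z/n\subset H^1(i^!B)$ would give a nonzero map $i_*(\mathbb Z/n[-1])\to B$, since $\Hom(i_*G,B)=\Hom(G,i^!B)$. That is a subobject supported at $0$. Quotients are detected instead by $\Hom(B,i_*G)=\Hom(i^*B,G)$.
\item[(3)] The degree-$0$ part of a ${}^p_+$-perverse object on a point must be torsion-free, not an arbitrary group.
\item[(4)] Your identification of $u$ with the inclusion of truncations is fine, but it deserves one line of justification. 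Since $\Hom(i^*A,i^!B)=\Hom(i^*A[-1],i^!B)=0$ for degree reasons, the map $\Hom(A,B)\to\Hom(j^*A,j^*B)$ is bijective, so $u$ is determined by $j^*u$.
\end{enumerate}
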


\begin{proof}
By Corollary~\ref{cor:app-bdd-friedman-identification}, we may identify
\[
A\cong P_{\vec p},
\qquad
B\cong P_{D\vec p}
\]
for
\[
\vec p(S)=(1,\emptyset),
\qquad
D\vec p(S)=(1,P(R)).
\]

Now apply Friedman's local cone formula on a conical neighborhood
\(\mathbb R^k\times cL\); see \cite[Lemma~4.2]{FriedmanTsInv}.  For a ts-Deligne sheaf
\(P^\ast\), the stalk at a point \(x\in S\) satisfies
\[
H^i(P_x)\cong
\begin{cases}
0,& i>\vec p_1(S)+1,\\
T_{\vec p_2(S)}H^i(L;P^\ast|_L),& i=\vec p_1(S)+1,\\
H^i(L;P^\ast|_L),& i\le \vec p_1(S).
\end{cases}
\]

For the lower-middle choice \(\vec p(S)=(1,\emptyset)\), no torsion is retained at the
critical degree. For the complementary choice \(D\vec p(S)=(1,P(R))\), all torsion is
retained there.

By Lemma~\ref{lem:app-rational-link-torsion},
\[
H^1(L,\mathbb Z)
\]
is finite torsion and
\[
H^2(L,\mathbb Z)=H^2(L,\mathbb Z)_{\tors}.
\]
Therefore the degree-\((-1)\) local contribution coming from \(H^1(L,\mathbb Z)\) is treated
identically by both ts-Deligne sheaves and hence cancels in the comparison between \(A\)
and \(B\). The only discrepancy occurs in degree \(0\), where the lower-middle choice
\((1,\emptyset)\) retains no torsion and the complementary dual choice \((1,P(R))\) retains
all torsion. Since the entire degree-\(0\) contribution is
\[
H^2(L,\mathbb Z)=H^2(L,\mathbb Z)_{\tors},
\]
the local difference between \(A\) and \(B\) is exactly this finite group.

Because the cone of \(u\) is supported at \(0\), it follows that
\[
\operatorname{Cone}(u)\cong i_*E,
\qquad
E\cong H^2(L,\mathbb Z)=H^2(L,\mathbb Z)_{\tors}.
\]
\end{proof}

\begin{corollary}\label{cor:app-rational-gluing}
Let $(X,0)$ be a rational normal complex surface germ with isolated singular point. Then
\[
\operatorname{Cone}\!\left({}^{p}j_{!*}\mathbb Z_U[2]\longrightarrow
{}^{p}_{+}j_{!*}\mathbb Z_U[2]\right)
\cong i_*H^2(L,\mathbb Z)_{\tors}.
\]
Equivalently, by Section~4,
\[
\operatorname{Cone}\!\left({}^{p}j_{!*}\mathbb Z_U[2]\longrightarrow
{}^{p}_{+}j_{!*}\mathbb Z_U[2]\right)
\cong i_*(\Lambda^\vee/\Lambda).
\]
\end{corollary}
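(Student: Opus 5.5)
The corollary should follow formally from Proposition~\ref{prop:app-rational-ts-bridge} together with Lemma~\ref{lem:app-rational-link-torsion} and the resolution comparison of Section~4. No new sheaf-theoretic input should be needed beyond what has already been assembled.

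First, I will record that by definition $A={}^{p}j_{!*}\mathbb Z_U[2]$ and $B={}^{p}_{+}j_{!*}\mathbb Z_U[2]$. The morphism in the corollary is therefore the natural map $u:A\to B$ that is the identity on $U$. Proposition~\ref{prop:app-rational-ts-bridge} gives $\operatorname{Cone}(u)\cong i_*E$ with $E\cong H^2(L,\mathbb Z)$. Lemma~\ref{lem:app-rational-link-torsion} then gives $H^2(L,\mathbb Z)=H^2(L,\mathbb Z)_{\tors}$ in the rational case. Substituting yields the first displayed isomorphism
\[
\operatorname{Cone}(u)\cong i_*H^2(L,\mathbb Z)_{\tors}.
\]

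For the second display, I will apply Proposition~\ref{prop:link-discriminant}, equivalently Lemma~\ref{lem:torsion-boundary-discriminant}. It gives a canonical isomorphism $H^2(L,\mathbb Z)_{\tors}\cong\Lambda^\vee/\Lambda$, where $\Lambda$ is the exceptional lattice of the minimal resolution. That argument used only Mumford's negative definiteness and the torsion-freeness of $H_1(N,\mathbb Z)$ from Lemma~\ref{lem:H1N-free}, so it applies to any normal surface germ, in particular rational ones. Since $i_*$ is a functor, it carries this group isomorphism to an isomorphism $i_*H^2(L,\mathbb Z)_{\tors}\cong i_*(\Lambda^\vee/\Lambda)$. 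Composing with the first display then finishes the proof.

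The only point needing care, and the main obstacle I anticipate, is the normalization of the cone. Earlier in the paper the cone is written as $E[1]$ (Proposition~\ref{prop:self-dual-triangle}), while here it is $i_*E$ in the Friedman convention, as flagged in the remark after Theorem~\ref{thm:app-rational-codim2-gluing}. I will state explicitly that the corollary is asserted in the appendix's normalization, inherited verbatim from Proposition~\ref{prop:app-rational-ts-bridge}, so that no shift needs to be reconciled. I will also check that the identifications are canonical, so that the composite isomorphism is canonical as well. This requires the map $H_1(L)_{\tors}\to H^2(L)_{\tors}$ used in Lemma~\ref{lem:torsion-second-link} to be the canonical $\Ext$ identification.
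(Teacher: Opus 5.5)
Your proposal is correct and follows the paper's own proof: the first display is Proposition~\ref{prop:app-rational-ts-bridge}, which already includes $H^2(L,\mathbb Z)=H^2(L,\mathbb Z)_{\tors}$ from Lemma~\ref{lem:app-rational-link-torsion}, and the second display is obtained by applying $i_*$ to the Section~4 isomorphism $H^2(L,\mathbb Z)_{\tors}\cong\Lambda^\vee/\Lambda$. One caution: the $i_*E$ versus $E[1]$ discrepancy is not a normalization choice, because the cone of a fixed morphism has a well-defined degree; a direct stalk computation ($A=\tau_{\le -1}Rj_*\mathbb Z_U[2]$, while $B$ agrees with $A$ below degree $0$ and has $H^0(B)_0=H^2(L,\mathbb Z)_{\tors}$) puts the cone in degree $0$, so the appendix version you rely on is the correct one and your argument is unaffected.
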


\begin{proof}
The first statement is Proposition~\ref{prop:app-rational-ts-bridge}.  The second follows
from the canonical identification
\[
H^2(L,\mathbb Z)_{\tors}\cong \Lambda^\vee/\Lambda
\]
proved in Section~4.
\end{proof}

\begin{remark}
Appendix~\ref{app:rational-codim2} isolates the rational codimension-two comparison in a
form that is compatible with the main paper but does not replace the full general
Jung--Saito-based local surface statement.  The key new ingredient is the explicit matching
of the relevant Friedman torsion-sensitive perverse hearts with the BBD ordinary and dual
integral middle-perversity hearts in the isolated codimension-two point-stratum case.
\end{remark}

%
%
\printbibliography

\end{document}